\documentclass[11pt,twoside,a4paper]{article}
\usepackage[english]{babel}
\usepackage{a4wide}
\usepackage{graphicx}
\usepackage{amsmath,amssymb,amsthm, amsgen}
\usepackage{listings}
\usepackage{color}
\usepackage[usenames,dvipsnames]{xcolor}
\usepackage{rotating}
\usepackage{hhline}
\usepackage{multirow}
\usepackage{enumerate}
\usepackage[bookmarks]{}
\usepackage{amsfonts}   % if you want the fonts
\usepackage{dsfont}
\usepackage{tikz}
\usepackage{booktabs}
\usepackage{longtable}
\usepackage{enumitem}
\usepackage{stmaryrd} % for the llbrackets
\usepackage{epstopdf}
\usepackage{comment}
\usepackage{epstopdf}

%\usepackage{refcheck}
%\usepackage[notcite,notref]{showkeys}

%\long\def\sp#1{{\color{red}#1}}
%\long\def\pvm#1{{\color{blue}#1}} 
%\long\def\comm#1{{\color{orange}#1}} 

\usepackage{thmtools}
\newtheorem{thm}{Theorem}[section]
\newtheorem{prop}[thm]{Proposition}
\newtheorem{lem}[thm]{Lemma}
\newtheorem{lemma}[thm]{Lemma}

\newtheorem{defn}[thm]{Definition}

\newtheorem{rem}[thm]{Remark}
\newtheorem{ass}[thm]{Assumption}

\numberwithin{equation}{section}
%\newtheorem{lem}[thm]{Lemma}
%\newtheorem*{ack}{Acknowledgments}

%\theoremstyle{remark}
%\newtheorem{example}{Example}[section]
%\newtheorem*{ack}{Acknowledgments}

%\theoremstyle{definition}
%\newtheorem{definition}{Definition}[section]

%\usepackage{fancyhdr}
%\usepackage{datetime}
%\fancyhf{}
%\fancyhead{}
%\renewcommand{\headrulewidth}{0pt}
%\fancyfoot[L]{\tiny \today\ \currenttime}
%\fancyfoot[C]{\thepage}
%\pagestyle{fancy}

\def\XXint#1#2#3{{\setbox0=\hbox{$#1{#2#3}{\int}$ }
\vcenter{\hbox{$#2#3$ }}\kern-.6\wd0}}

\newcommand{\e}{\varepsilon}

\newcommand{\N}{\mathbb N}
\newcommand{\R}{\mathbb R}

\newcommand{\Z}{\mathbb Z}

\newcommand{\bb}{\mathbf b}

\newcommand{\bx}{\mathbf x}

\newcommand{\cI}{\mathcal I}

\newcommand{\cT}{\mathcal T}

\newcommand{\tT}{\tilde T}
\newcommand{\ttau}{\tilde \tau}
\newcommand{\tv}{\tilde v}
\newcommand{\tx}{\tilde x}

\newcommand{\osT}{\overline{\mathsf T}}
\newcommand{\hsT}{\hat{\mathsf T}}

\newcommand{\beq}{\begin{equation}}
\newcommand{\eeq}{\end{equation}}
\newcommand{\beqs}{\begin{equation*}}
\newcommand{\eeqs}{\end{equation*}}
\newcommand{\beqa}{\begin{eqnarray}}
\newcommand{\eeqa}{\end{eqnarray}}
\newcommand{\beqas}{\begin{eqnarray*}}
\newcommand{\eeqas}{\end{eqnarray*}}
\newcommand{\xs}{\overline{x}}
\newcommand{\vs}{\overline{v}}
\newcommand{\cs}{\overline{c}}
\newcommand{\ep}{\varepsilon}
\newcommand{\xss}{\hat{x}}

\renewcommand{\o}{\overline}

\synctex=1

\title{Discrete Dislocation Dynamics with annihilation as the limit of the Peierls-Nabarro model in one dimension}

\author{Patrick van Meurs, Stefania Patrizi}

%\thanks{The authors have been supported by the
%ERC grant 277749 ``EPSILON Elliptic
%Pde's and Symmetry of Interfaces and Layers for Odd Nonlinearities''}

%\address[Stefania Patrizi ]{
%Department of Mathematics,
%University of Texas at Austin,
%2515 Speedway Stop C1200,
%Austin, Texas 78712-1202, USA}

%\address[]{}

%\email{spatrizi@math.utexas.edu} 
%\email{}
%%%%%%%%%%%%%%%%%%%%%%%%%%%%%%%%%%%%%%%%%%%%%%%%%%%%%%%%%%%%%%%%%%%%%%
%%%%%%%%%%%%%%%%%%%%%%%%%%%%%%%%%%%%%%%%%%%%%%%%%%%%%%%%%%%%%%%%%%%%%%

%\subjclass[2010]{82D25, 35R09, 74E15, 35R11, 47G20.}
%\keywords{Peierls-Nabarro model, nonlocal integro-differential equations,
%dislocation dynamics, fractional Allen-Cahn.}

\def\showfigbool{1}
\long\def\showfig#1{{\if\showfigbool1 {#1} \fi}}

\begin{document}

\maketitle
%\footnote{The second  author have been supported by the
%NSF Grant DMS-2155156 "Nonlinear PDE methods in the study of interphases"}

%\sp{Stefania's comments}

%\pvm{Patrick's comments}

%\comm{Removable comments, which we keep until submission for further clarifications }
%\bigskip

%\pvm{[Figures are hidden to make the tex compile faster. To make them appear, put the boolean "\showfigbool" in the preamble right above "begin document" to 1]}

%\pvm{[Address, email, keywords, MSC have been commented out in the tex. The don't work in the current documentclass]}

\begin{abstract}
Plasticity of metals is the emergent phenomenon of many crystal defects (dislocations) which interact and move on microscopic time and length scales. Two of the commonly used models to describe such dislocation dynamics are the Peierls-Nabarro model and the so-called discrete dislocation dynamics model. 
 However, the consistency between these two models is known only for a few number of dislocations or up to the first time at which two dislocations collide. In this paper we resolve these restrictions, and establish the consistency for any number of dislocations and without any restriction on their initial position or orientation.

In more detail, the evolutive Peierls-Nabarro model which we consider describes the evolution of a phase-field function $v_\e(t,x)$ which represents the atom deformation in a crystal. The model is a reaction-diffusion  equation of Allen-Cahn type with the half Laplacian. The small parameter $\ep$ is the ratio between the atomic distance and the typical distance between phase transitions in $v_\e$. The position of a phase transition determines the position of a dislocation, and the sign of the transition (up or down) determines the orientation. 

The goal of this paper is to derive the asymptotic behavior of the function $v_\e$ as $\ep\to0$ up to arbitrary end time $T$; in particular beyond collisions. We prove that $v_\e$ converges to a piecewise constant function $v$, whose jump points in the spatial variable satisfy the ODE system which represents discrete dislocation dynamics with annihilation. Our proof method is to explicitly construct and patch together several sub- and supersolutions of $v_\e$, and to show that they converge to the same limit $v$.
\end{abstract}
%\subjclass[2010]{82D25, 35R09, 74E15, 35R11, 47G20.}
{\em  Keywords}: {Peierls-Nabarro model, nonlocal integro-differential equations,
dislocation dynamics, fractional Allen-Cahn.}
% MSC 2020

\noindent{\em MSC}: { 82D25, 35R09, 74E15, 35R11, 47G20.}
\tableofcontents

\bigskip

\section{Introduction}

%\comm{Reminder about the structure of the intro before Theorem \ref{t}:
%\begin{enumerate}
 % \item The aim; 2 paragraphs. 1st: description of dislocs. 2nd, Aim + motivation.
 % \item Non-technical description of PN, and of DDD as its limit, and mathematical references in which they are studied. Punchline; Thm \ref{t} as the final missing piece.
 % \item The math details of PN + DDD necessary to state Theorem \ref{t} rigorously.
%\end{enumerate}}

In three dimensions dislocations are line defects in crystals. These
lines can move in the crystallographic planes (slip planes), which typically happens when the crystal is submitted to shear stress. This movement
is one of the main explanations for the plastic behavior of metals. We refer the reader to the books \cite{hl,HullBacon11} for a tour in the theory of dislocations.
Dislocations  can be described at several scales by different models, see e.g.  the review paper \cite{DipierroPatriziValdinoci22}.
Due to the complexity of the evolution of lines in three dimensions, we follow the common simplification in which it is assumed that all dislocations are straight and parallel edge dislocations which lie and move in the same slip plane. Then, the dislocations can be represented by points on a line which lies in the slip plane and is perpendicular to the dislocation lines. In addition to their position, each dislocation has either a  positive or a  negative orientation.  
%\begin{itemize}
%\item[a)] atomic scale (Frenkel-Kontorova model), 
%\item [b)]microscopic scale (Peierls-Nabarro model), 
%\item [c)] mesoscopic scale (Discrete dislocation dynamics), 
%\item[d)] macroscopic scale (Elasto-visco-plasticity with density of dislocations). 
%\end{itemize}

Even after this simplification to reduce the number of spatial dimensions from three to one, there are several different models to describe the dynamics of dislocations. Here, we mention three of these models. First, the classical model by Frenkel and Kontorova is the most detailed model out of the three; it describes the deformation of all the atoms. The dislocation positions appear in the form of certain local configurations of the atoms. Second, discrete dislocation dynamics is the coarsest model out of the three; it neglects any atomic effects and simply describes the dislocation positions as a system of ODEs on the continuous line $\R$. Third, the Peierls-Nabarro model lies in between these two; it does not describe the position of each atom, but it does describe the atom positions through a continuous displacement function $v_\e$. In \cite{fino} the connection between the Frenkel-Kontorova model and the Peierls-Nabarro model is established, but a complete connection between the Peierls-Nabarro model and discrete dislocation dynamics appears to be missing in the literature. In this paper we fill this gap.

\subsection{Position in the literature}
We are not the first to attempt to fill the aforementioned gap in the literature. Therefore, we give a brief overview of the literature on the connection between the Peierls-Nabarro model and discrete dislocation dynamics. With this aim, we first describe the Peierls-Nabarro model in more detail. This model is based on the classical work by
Peierls and Nabarro \cite{Peierls40,Nabarro47}. It describes the evolution of 
%(see e.g.~\cite{Nab97, GonzalezMonneau12}and the references therein), 
the displacement function $v_\ep(t,x)$ as a phase field. The function $v_\ep(t,x)$ depends
on the time variable~$t\geq0$,  the space variable~$x\in\R$, and a small parameter $\e > 0$ which is the ratio between the atomic distance and the typical distance between two neighboring dislocations. Figure \ref{fig:ve} illustrates a typical profile of $v_\ep(t,\cdot)$. The position of a transition layer determines the position of a dislocation, and the sign of the transition (up or down) determines the orientation. 
We allow for an arbitrary number of adjacent dislocations with the same orientation, and therefore $v_\e$ may attain an arbitrary number of different phases. 

\begin{figure}[ht]
\centering
\begin{tikzpicture}[scale=1.5, >= latex]
\def \rr {0.04} 

\draw[->] (.5,-.2) -- (.5, 2.2);
\draw[->] (0,0) -- (5, 0) node[below]{$x$};
\draw[thin] (0,1) -- (5, 1);
\draw[thin] (0,2) -- (5, 2);

\draw [thick, red] (0,0) .. controls (.9,0) .. (1,.5);
\draw [thick, red] (1,.5) .. controls (1.1,1) .. (1.5,1);
\draw [thick, red] (1.5,1) .. controls (1.9,1) .. (2,1.5);
\draw [thick, red] (2,1.5) .. controls (2.1,2) .. (3,2);
\draw [thick, red] (3,2) .. controls (3.9,2) .. (4,1.5) node[right]{$v_\e(t,x)$};
\draw [thick, red] (4,1.5) .. controls (4.1,1) .. (5,1);

\draw (.8,0) rectangle (1.2,1);
\draw[<->] (.2,0) --++ (0,1) node[midway, left]{$1$};
\draw[<->] (.8,1.1) --++ (.4,0) node[midway, above]{$\e$};
\draw[thin] (1,.5) --++ (0,-.5) node[below]{$x_1$};
\draw[thin] (2,1.5) --++ (0,-1.5) node[below]{$x_2$};
\draw[thin] (4,1.5) --++ (0,-1.5) node[below]{$x_3$};
\end{tikzpicture} \\
\caption{Sketch of $v_\e(t,x)$ at a given time $t$.}
\label{fig:ve}
\end{figure}
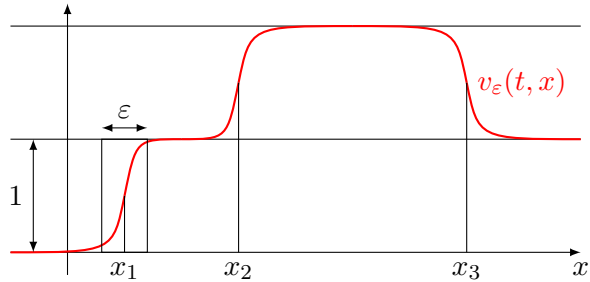

The Peierls-Nabarro model is a nonlocal equation for $v_\e$.
More precisely, the influence of the elastic energy of the whole
crystal along the slip plane produces a fractional operator (the half Laplacian),
which we denote by~$\cI$. The contribution of $\cI$ is balanced by an atomic force which pushes the atoms to the lattice positions $\e \Z$, and can therefore be written as the negative of the derivative of a periodic multi-well
potential~$W$.
The long time behavior of $v_\ep$ is studied in \cite{PatriziValdinoci17}.

In view of our aim, we are interested in the limit $\e \to 0$. Several results have already been obtained. Starting from an initial configuration where the dislocation transitions occurs at some given points, the displacement  function~$v_\ep$
approaches as $\e \to 0$ a piecewise constant function $v$
(see~\cite{GonzalezMonneau12, dfv14,dpv15,PatriziValdinoci15, PatriziValdinoci16}).
The plateaus of this asymptotic limit correspond to
the periodic sites induced by the crystalline structure. The jump points~$x_1(t),\dots,x_N(t)$ evolve in time 
as described by the discrete dislocation dynamics model. This model is a system of first order ordinary differential equations for `the particles' $x_i(t)$ which is driven by a singular interaction potential. 
We refer to Subsection 2.2 of  \cite{PatriziValdinoci15} for a  heuristic discussion  of the link between the integro-differential equation of $v_\e$
and the ODE system for $x_i(t)$.   
Remarkably, the physical properties of the singular potential
of this ODE system depend
on the orientation of the dislocation
at the jump points. Namely, if the jumps of $v$ at~$x_i$ and~$x_{i+1}$
are in the same direction (either up or down),
then the potential induces a repulsion between
the particles~$x_i$ and~$x_{i+1}$.
Conversely, when the jumps of $v$ are in opposite direction,
then the potential becomes attractive, and the two particles may
collide in finite time. However, the singularity of the potential complicates the analysis of the ODE system of $x_i(t)$ close to and at collisions. In addition, it is challenging to get sufficient estimates on the solution $v_\e$ of the Peierls-Nabarro model when two phase transitions with opposite jumps approach each other. This is the main reason that the current results on the limit passage as $\e \to 0$ of $v_\e$ are limited to either the first time at which two dislocations collide, or to a few number of dislocations (2 or 3). In more detail for the second case, in \cite{PatriziValdinoci16} the second author and Valdinoci  studied the behavior of the displacement  function $v_\ep$ across the collision time in  
the case of two and three
particles and showed that the limit configuration of~$v_\ep$ after collision 
is either a constant (in the case of two particles) or a constant-in-time simple  jump function 
(in the case of three particles). 

Recently, the first author, Peletier and Pozar   \cite{VanMeursPeletierPozarXX} proved well-posedness for the  ODE system for $x_i(t)$ which allows to resolve collisions and continue the evolution beyond collision times. More details will be given below in Section \ref{s:intro:main}. This result, together with \cite{PatriziValdinoci16}, provides us with sufficient tools to characterize the limit $\ep\to0$ of $v_\ep(t,x)$ without any limitations on $t$, on the number of dislocations, on their positions or on their orientations. This limit passage is the main result of our paper; see Theorem \ref{t} below.

\subsection{Main result}
\label{s:intro:main}
 
To describe Theorem \ref{t}, our main result, we introduce the Peierls-Nabarro model and the discrete dislocation dynamics model in full detail. We start with the Peierls-Nabarro model. It is given by
\begin{equation} \label{HJe:formal} \tag{HJ$_\e$}
\begin{cases}
  \e \partial_t v_\e = \cI [v_\e]  - \frac1\e W' ( v_\e )
  & \text{in } (0,\infty) \times \R\\
  v_\e(0,x)=v_\e^0(x)& \text{in }   \R,
  \end{cases}
\end{equation}
where $\cI$ is the half Laplacian $-(-\Delta)^\frac12$ defined by 
\begin{equation} \label{cI}
  \cI [\varphi] (x)
      %= (-\Delta)^{\frac12} u (x)
      %= \cF^{-1} \left( |\xi| \widehat u (\xi) \right) \\
      %= \mathcal H [\partial_x u] (x)
      := \text{PV}\int_\R \big( \varphi(x + z) - \varphi(x) \big) \, \frac{dz}{z^2}
      \qquad \text{for all } \varphi \in C_b^2(\R),
\end{equation}
where PV stands for principal value.   We refer to \cite{Silvestre} for a basic introduction to the fractional Laplace operator. Furthermore, $W$ in \eqref{HJe:formal} is a multi-well potential, which we assume throughout this paper to satisfy
\begin{equation}\label{Wass}
\begin{cases}W\in C^{2,\beta}(\R)& \text{for some }0<\beta<1\\
W(v+1)=W(v)& \text{for any } v\in\R\\
W(v)>0& \text{for any }0 < v < 1\\
W(0)=0 & \\
W''(0)>0. & \\
\end{cases}
\end{equation}
%The precise assumptions on W are given in \eqref{Wass}. 
 A prototypical example of $W$ is $W(x) = \sin^2 (\pi x)$. Finally, $v_\ep^0$ is a given initial condition. We assume that it is a superposition of $N$  transition layers, each with an arbitrary orientation. More precisely, we take the positions of the transition layers from 
\begin{equation} \label{Om}
  \Omega^N := \{ \bx \in \R^N : x_1 < x_2 < \ldots < x_N \},
\end{equation}
and set $\bb \in \{-1, +1\}^N$ as a given list of orientations. To describe the transition layers at $x_i$, we first define the upward phase-transition profile
\begin{equation*}%\label{u+1}
  u(x) := u(x ; + 1) 
\end{equation*}
centred at $0$ on the atomic scale as the  so-called basic layer solution $u$ associated to $\cI$, that is the solution of
\begin{equation}\label{u}
\begin{cases}\cI(u) = W'(u) &\text{in}\quad \R\\
 u' > 0 &\text{in}\quad \R\\
\displaystyle\lim_{x\rightarrow-\infty} u(x) = 0,\quad \displaystyle\lim_{x\rightarrow+\infty} u(x) = 1,\quad u (0) = \displaystyle \frac{1}{2}.
\end{cases}
\end{equation}
%This is the basic transition layer that we will use to
%construct our initial data.
%\cite[Thm.\ 1.2]{CabreSolaMorales05}  provides existence and uniqueness of the solution of \eqref{u}. 
From the properties of $\cI$ and $W$ it is then easy to check that the downward phase-transition profile
\begin{equation*} %\label{u-1} % p.16.mid
  u(x; -1) := u(-x) - 1
\end{equation*}
is the  solution of  
\begin{equation*} %\label{u-}
\begin{cases}\cI(v) = W'(v) &\text{in}\quad \R\\
 v' < 0 &\text{in}\quad \R\\
\displaystyle\lim_{x\rightarrow-\infty} v(x) = 0,\quad \displaystyle\lim_{x\rightarrow+\infty} v(x) = -1,\quad  v(0) = - \displaystyle \frac{1}{2}.
\end{cases}
\end{equation*}
Putting this together, our assumption on the initial data is as follows:
 
\begin{ass}[Well-prepared initial data] \label{a:v0e}
The initial condition $v_\e^0 : \R \to \R$ is well-prepared, i.e.\ there exist $\bx^0, \bx_\e^0 \in \Omega^N$ and a perturbation $\phi_\e^0 \in C^{1,1}(\R)$ such that
\begin{enumerate}[label=(\roman*)]
  \item \label{a:v0e:x0e} $\bx_\e^0 \to \bx^0$ as $\e \to 0$,
  %\item there exists $l_0 > 0$ such that for all $\e > 0$ small enough \[ \min \{ |x_{\e,i}^0 - x_{\e,j}^0| : i,j \in S, \: i \neq j \} \geq l_0, \] 
  \item $\| \phi_\e^0 \|_\infty = o(1)$ as $\e \to 0$,
  \item \label{a:v0e:IC} for all $x \in \R$
  \begin{equation} \label{v0e}
  v_\e^0 (x) = \sum_{i = 1}^N u \Big( \frac{x - x_{\e,i}^0}\e ; b_i \Big) + \phi_\e^0(x).
\end{equation}
\end{enumerate}
\end{ass}
Note that
\[
  v_\e^0 (-\infty) = o(1)
  \quad \text{and}
  \quad v_\e^0 (\infty) = B + o(1)
  \quad \text{as } \e \to 0,
\]
where
\[
  B := \sum_{i = 1}^N b_i \in \Z
\]
is the number of positively oriented particles minus the number of negatively oriented particles. 

Finally, we remark on the spatial scaling of \eqref{HJe:formal} is a rescaling of the original Peierls-Nabarro model. In the original model the distance between the atoms at rest is $1$ and the distance between dislocations is of order $\frac1\e$. Then, the period of $W$ matches with the atomic distance, and $u$ describes a typical phase transition of $v_\e$ (with layer thickness of order $1$) without the need to rescale by $\e$ as currently done in \eqref{v0e}. However, our aim is to connect to the discrete dislocation dynamics in terms of $x_i(t)$, which is most naturally done when the dislocation distance is of order $1$. 
\smallskip 

Next we describe the discrete dislocation dynamics model in full detail. It is given by the system of ODEs
\begin{equation} \label{PN} \tag{P$_N$}
   %(P_N) \qquad 
   \left\{ \begin{aligned}
     &\frac{dx_i}{dt} = c_0 \sum_{ j \in S_t \setminus \{i\} } \frac{b_i b_j}{x_i - x_j}
     && t\in(0,\infty), \ i = 1,\ldots, N \\
     &+ \text{annihilation upon collision}
     && \\
     &\bx(0) = \bx^0, \qquad S_0 = \{1, \ldots, N\},
   \end{aligned} \right.
\end{equation}
where
\beq\label{c0}c_0=\left(\int_\R (u')^2\right)^{-1}\eeq
is a mobility constant (or drag coefficient) and $u$ is the solution of \eqref{u}. 
A proper definition and well-posedness of this particle system is given in \cite{VanMeursPeletierPozarXX}; see also Definition \ref{d:PN:sol} and Proposition \ref{p:vMPP} below. Here, we give a formal description of \eqref{PN}. $N \in \N$ is the number of particles,  $\bx^0 \in \Omega^N$ is an ordered list of initial positions, $\bx = (x_1, \ldots, x_N) \in \R^N$ are the particle positions and $S_t$ is the index set of the surviving particles, i.e.\ the particles which are not annihilated up to and including time $t$. Figure \ref{fig:trajs} illustrates the  dynamics. The dynamics can be thought of as the overdamped limit of positively and negatively charged particles interacting on the real line by the Coulomb potential. Since the Coulomb potential is singular, particle collisions typically happen in finite time and with unbounded particle velocities. The additional feature of \eqref{PN} to this dynamics is the collision rule. This rule states that when two particles of opposite sign collide, they are both taken out (annihilated) from the system. It turns out that more than two particles can collide at the same time-space point, but only if their signs are alternating (when ordered from left to right or vice versa). The collision rule is such that at any multiple-particle collision of $m$ particles, all particles annihilate if $m$ is even and precisely one particle survives if $m$ is odd. The orientation of the surviving particle equals the sum of the orientations of all the colliding particles. The role of the index set $S_t$ is simply to keep track of the surviving particles. Two basic properties of the solution $\bx$ to \eqref{PN} are that the particle positions remain strictly ordered in time (i.e.\ $\{ x_i(t) \}_{i \in S_t} \in \Omega^{\# S_t}$) and that the sum of all orientations is conserved, i.e.
\[
  B = \sum_{i \in S_t} b_i
  \qquad \text{for all } t \geq 0.
\]
\smallskip

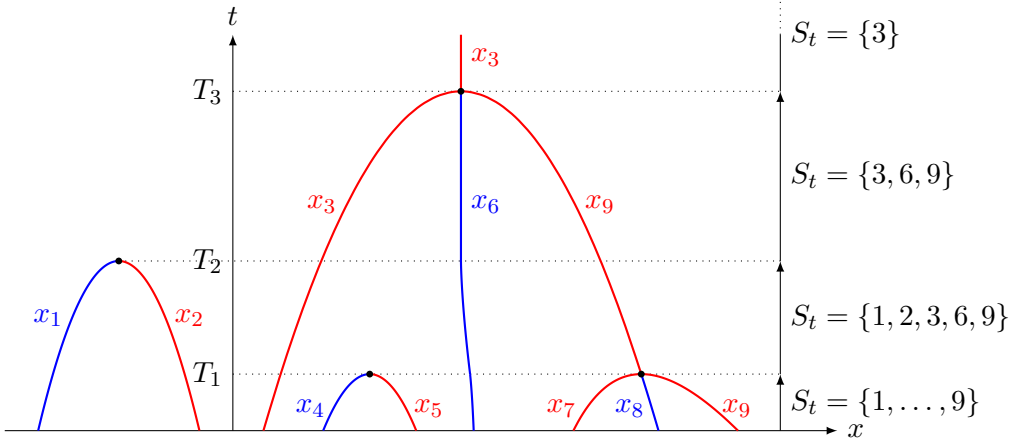
\begin{figure}[ht] 
\centering
\begin{tikzpicture}[scale=1.5, >= latex]
\def \sqtwo {1.414}
\def \rr {0.03}

\draw[->] (0,0) -- (0,3.5) node[above] {$t$};
\draw[->] (-2,0) -- (5.3,0) node[right] {$x$};

\draw[dotted] (0,.5) node[left]{$T_1$} -- (4.8,.5);
\draw[dotted] (-1,1.5) -- (0,1.5) node[left]{$T_2$} -- (4.8,1.5);
\draw[dotted] (0,3) node[left]{$T_3$} -- (4.8,3);

\draw[->] (4.8,0) --++ (0,.5) node[midway, right] {$S_t = \{1,\ldots,9\}$}; 
\draw[->] (4.8,.5) --++ (0,1) node[midway, right] {$S_t = \{1,2,3,6,9\}$}; 
\draw[->] (4.8,1.5) --++ (0,1.5) node[midway, right] {$S_t = \{3,6,9\}$}; 
\draw (4.8,3) --++ (0,.5) node[right] {$S_t = \{3\}$}; 
\draw[dotted] (4.8,3.5) --++ (0,.3);

\draw[blue] (-1.4, 1) node[left] {$x_1$};
\draw[red] (-.6, 1) node[right] {$x_2$};
\draw[red] (1, 2) node[left] {$x_3$};
\draw[blue] (.9, .2) node[left] {$x_4$};
\draw[red] (1.5, .2) node[right] {$x_5$};
\draw[blue] (2, 2) node[right] {$x_6$};
\draw[red] (3.1, .2) node[left] {$x_7$};
\draw[blue] (3.7, .2) node[left] {$x_8$};
\draw[red] (4.2, .2) node[right] {$x_9$};
\draw[red] (3, 2) node[right] {$x_9$};
\draw[red] (2, 3.3) node[right] {$x_3$};

%\draw[orange] (-2,0) grid (5,3);

\showfig{
% last annihilation arc
\begin{scope}[shift={(2,3)},scale=1] 
    \draw[thick, red] (0,0) -- (0,.5);
    \draw[thick, blue] (0,-1.5) -- (0,0);
    \draw[domain=-1.732:1.581, smooth, thick, red] plot (\x,{-\x*\x});           
    \draw[domain=1.581:1.732, smooth, thick, blue] plot (\x,{-\x*\x});           
    \fill[black] (0,0) circle (\rr); 
\end{scope}

% continuation of blue line
\begin{scope}[shift={(2,1.5)},scale=1, rotate = 270] 
    \draw[domain=0:1, smooth, thick, blue] plot (\x,{.08*\x*sqrt(\x)}); 
    \draw[domain=1:1.5, smooth, thick, blue] plot (\x,{.08*\x*sqrt(\x) - .1*(\x - 1)*sqrt(\x - 1)});         
\end{scope}

\begin{scope}[shift={(3.581,.5)},scale=1] 
    \draw[domain=-.707:.707, smooth, thick, red] plot ({2*exp(-\x/2)-2},{-\x*\x});
    \fill[black] (0,0) circle (\rr);  
\end{scope}

\begin{scope}[shift={(1.2,.5)},scale=1] 
    \draw[domain=0:.408, smooth, thick, red] plot (\x,{-3*\x*\x});
    \draw[domain=-.408:0, smooth, thick, blue] plot (\x,{-3*\x*\x});     
    \fill[black] (0,0) circle (\rr);     
\end{scope}

\begin{scope}[shift={(-1,1.5)},scale=1] 
    \draw[domain=-.707:0, smooth, thick, blue] plot (\x,{-3*\x*\x});  
    \draw[domain=0:.707, smooth, thick, red] plot (\x,{-3*\x*\x}); 
    \fill[black] (0,0) circle (\rr);         
\end{scope}
}
\end{tikzpicture} \\
\caption{A sketch of solution trajectories to \eqref{PN} with $N=9$. Trajectories of particles with positive orientation are colored red; those with negative orientation blue. }
\label{fig:trajs}
\end{figure}

Informally, we want to prove
\begin{equation*} %\label{Q:formal:N} 
  \eqref{HJe:formal} \longrightarrow \eqref{PN}\quad\text{as }\ep\to0.
\end{equation*}
%The ODE system \eqref{PN} represents the discrete  dynamics of dislocation at mesoscopic scale. 
To connect the solution $\bx$ of \eqref{PN} to the solution $v_\e$ of \eqref{HJe:formal}, we set
\begin{equation} \label{v}  
   v(t,x) := \sum_{i \in S_t} b_i \, H \big( x - x_i(t) \big)
   \qquad \text{for all } t \geq 0 \text{ and all } x \in \R, 
\end{equation}   
where $H$ is the Heaviside function. We remark that $v$ can be interpreted as the unique (discontinuous) viscosity solution to a certain Hamilton-Jacobi equation; see  \cite[Prop.\ 4.5]{VanMeursPeletierPozarXX}.

Finally, to state our result,
%Finally, for $T > 0$
we denote  for a function $f :  (0,\infty) \times \R \to \R$ the upper semi-continuous envelope by $f^*$ and the lower semi-continuous envelope by $f_*$. Furthermore, for a sequence of functions $f_\e :  (0,\infty) \times \R \to \R$ parametrized by $\e > 0$, we set
\[
   {\limsup}^* f_\e (t,x)
   := \limsup_{ \substack{ s \to t \\ y \to x \\ \e \to 0 } } f_\e (s,y)
   \quad \text{and} \quad
   {\liminf}_* f_\e (t,x)
   := \liminf_{ \substack{ s \to t \\ y \to x \\ \e \to 0 } } f_\e (s,y).
\]
Our main result is the following:
\begin{thm}[Main] \label{t}
Let  $N \in \N$, $\bb \in \{-1, +1\}^N$ and $\bx^0 \in \Omega^N$.
Let $\bx$ be the solution to \eqref{PN} and $v$ be the corresponding step function defined in \eqref{v}. For each $\e > 0$, let $v_\e^0$ satisfy Assumption \ref{a:v0e} with $\bx^0$ and some $\bx_\e^0 \in \Omega^N$, and let $v_\e$ be the solution of \eqref{HJe:formal} subject to the initial condition $v_\e^0$. Then 
\[
  v_*
  \leq {\liminf}_* v_\e
  \leq {\limsup}^* v_\e \leq v^*
  \qquad \text{on } {[0,\infty) \times \R}.
\]
\end{thm}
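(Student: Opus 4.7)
The plan is to combine two ingredients: (a) an explicit sub/supersolution construction valid on any time interval free of collisions, and (b) a local-in-time analysis near each collision time of \eqref{PN} that absorbs the blow-up of particle velocities. Let $0 = T_0 < T_1 < \ldots < T_K$ denote the collision times of the ODE system, set $T_{K+1} := +\infty$, and induct over $k$. Since the inner inequality ${\liminf}_* v_\e \le {\limsup}^* v_\e$ is trivial, the task reduces to producing, for every $k$, subsolutions of \eqref{HJe:formal} that approximate $v$ from below and supersolutions that approximate it from above on $[T_k, T_{k+1}) \times \R$.

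For step (a), on any slab $[T_k + \sigma, T_{k+1} - \sigma]$ with $\sigma > 0$ small, the surviving trajectories $x_i(t)$ are smooth and well-separated. I would use the standard layered ansatz
\[
  \overline v_\e(t,x) := \sum_{i \in S_t} u\!\left( \frac{x - \overline x_i(t)}\e; b_i \right) + C\e^\alpha,
  \qquad
  \underline v_\e(t,x) := \sum_{i \in S_t} u\!\left( \frac{x - \underline x_i(t)}\e; b_i \right) - C\e^\alpha,
\]
where $\overline x_i(t), \underline x_i(t)$ are $O(\e^\gamma)$ outward/inward perturbations of $x_i(t)$ and $\alpha, \gamma > 0$ are chosen appropriately. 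Plugging into \eqref{HJe:formal} and Taylor-expanding, the layer equation \eqref{u} absorbs the self-interaction of each transition, while the cross-interactions between distinct layers reconstruct, to leading order, the singular sum $c_0 \sum_{j \ne i} b_i b_j /(x_i - x_j) = \dot x_i$. The remaining residual is an $O(\e^{\alpha'})$ term of favorable sign, so that $\underline v_\e$ is a subsolution and $\overline v_\e$ a supersolution of \eqref{HJe:formal}. The comparison principle for the nonlocal parabolic equation, started from the well-prepared datum of Assumption~\ref{a:v0e} (for $k = 0$) or from the output of step (b) at $t = T_k^+$ (for $k \ge 1$), yields $\underline v_\e \le v_\e \le \overline v_\e$ on the slab. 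Sending $\e \to 0$ and then $\sigma \to 0$ gives the desired envelope inequalities on $[T_k, T_{k+1}) \times \R$.

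For step (b), near each collision time $T_{k+1}$ and each collision point $x^*$ a cluster of $m$ particles with strictly alternating signs collapses; the rule of \eqref{PN} prescribes that $m \bmod 2$ of them survive, with orientation equal to the sum of the colliding $b_i$. Since the layered ansatz of step (a) breaks down as $|\dot x_i| \to \infty$, I would instead enclose a small space-time box around $(T_{k+1}, x^*)$ between two time-constant step-function barriers: the post-collision profile (with only the surviving particle(s) of the cluster) shifted up or down by a small constant so that, on the relevant plateaus where $W'$ vanishes and $W'' \ge 0$, they are super- and sub-solutions of \eqref{HJe:formal}. The two- and three-particle cases are exactly those established in \cite{PatriziValdinoci16}, and the general alternating case follows by a peeling induction in $m$: the two outermost layers of opposite sign are first sandwiched by a two-particle-style barrier, after which the inner $m-2$ alternating cluster is handled by the same construction. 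The non-collision sub/supersolutions of step (a), evaluated just before $T_{k+1}$, match these box barriers on the parabolic boundary, and just after $T_{k+1}$ they re-initialise step (a) on the next slab in a form compatible with Assumption~\ref{a:v0e} but with fewer particles. The main obstacle is step (b): building barriers for a general alternating $m$-cluster that are genuinely sub/supersolutions of the nonlocal equation, since the half-Laplacian $\cI$ forces the tail contributions from the far-away non-colliding layers to be controlled uniformly in $\e$ despite the diverging local velocities; this global, nonlocal book-keeping is precisely what the shifts $\e^\gamma$ in the step-(a) ansatz must absorb.
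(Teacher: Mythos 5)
Your overall architecture -- inducting over the collision times of \eqref{PN}, using a layered ansatz $\sum_i u((x-x_i^\e(t))/\e; b_i)$ away from collisions, and handing off between intervals -- matches the paper. But your step (b) has a genuine gap, and it is the heart of the matter.

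You propose to cross a collision by sandwiching $v_\e$ on a small space-time box between ``time-constant step-function barriers'' equal to the \emph{post-collision} profile shifted up/down by a small constant. This fails for two separate reasons. First, such a profile is not a supersolution of \eqref{HJe:formal}: a vertical shift $v\mapsto v+c$ does not commute with $W'$ away from its periods, and in the transition region of each remaining layer $W''<0$, so the linearised error $W''(u)c$ has the \emph{wrong} sign; the observation that $W'$ vanishes and $W''\geq 0$ only helps on the plateaus, not across the layers, and the nonlocal operator $\cI$ couples the two regions. Second, and more decisively, even if it were a supersolution it would not dominate $v_\e$ at the time the box is entered: just before a `$+-$' collision $v_\e$ still exhibits a bump of height $\approx 1$ at the collision point, while the post-collision profile (with the colliding pair removed) is $\approx 0$ there, so the comparison principle cannot be initialised. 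The paper's way around this (following \cite{PatriziValdinoci16}) is quite different: one constructs \emph{two} supersolutions $\hat v_\e^1,\hat v_\e^2$, each still of the full $N$-layer form and each still a genuine supersolution driven by a perturbed ODE \eqref{oPN}, but with the colliding pair pushed asymmetrically apart (to the right in one, to the left in the other). Each $\hat v_\e^\ell$ therefore dominates $\o v_\e$ at $t=\o T_1$ by Lemma~\ref{l:u:epsi:new}, each survives as a supersolution past $T_1$ by Lemma~\ref{l:te}, and their pointwise minimum $\hat v_\e^3$ (still a viscosity supersolution) is then shown at $t=\hat T_1$ to collapse to within $o_\e(1)$ of the post-collision profile with the colliding pair removed. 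That ``min of two shifted supersolutions'' trick is exactly the missing idea.

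Two further points, smaller but still material. Your layered ansatz does not include the corrector $\psi$ (solving \eqref{psi}) nor the constant tilt $\e(\o\sigma-\delta_\e)/\alpha$ in \eqref{ove}; without these, the residual after Taylor expansion is not sign-definite, so the ``$O(\e^{\alpha'})$ term of favorable sign'' claim does not come for free. Lemma~\ref{l:vebar} is what makes that residual manageable, at the price of requiring the perturbed particle system \eqref{oPN} with the extra force term $-b_i\o\sigma$. And your peeling induction on $m$ does not quite match the paper's case analysis: the paper (Section~\ref{s:pf:2}) first classifies the cluster by the parity of $\#I$ and the sign $b_k$, removes \emph{inner} `$-+$' dipoles via Lemma~\ref{l:u:epsi:dipole} in Cases 1--3 and in the reduction for Case 4, and only in Case 4 (``$+-$'' outer dipole, even $\#I$) runs the $\hat v_\e^\ell$ construction. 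Finally, you do not use any stability result for \eqref{PN} under perturbation (Lemma~\ref{l:obx}, derived from \cite{VanMeursPeletierPozarXX}); that lemma is what guarantees $\o x_i(\o T_1 -\tT_0)\to x_i(T_1)$ as $\e\to0$ and therefore that the supersolution's jump locations converge to those of $v^*$, which is indispensable for the conclusion.
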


Note that the convergence statement is an extension of local uniform convergence to functions $v$ that have jump discontinuities.
Indeed, at a point $(t,x)$ where $x\ne x_i(t)$ for any $i$, we have that $v_*=v^*$ and that $v_\e$ converges uniformly to $v$ in a neighborhood of $(t,x)$. 

\medskip

Our proof method for Theorem \ref{t} is based on the methods developed in \cite{GonzalezMonneau12, PatriziValdinoci15,PatriziValdinoci16}. On subsequent time intervals, we explicitly construct and patch together sub- and supersolutions, $\underline v_\e$ and $\o v_\e$ respectively, of \eqref{HJe:formal} such that $\underline v_\e \leq v_\e \leq \o v_\e$ and
 $v_* \leq {\liminf}_* \underline v_\e
  \leq {\limsup}^* \o v_\e \leq v^*$.
These sub- and supersolutions $\underline v_\e$ and $\o v_\e$ are always of the form \eqref{v0e} with $x_{\e,i}^0$ replaced by carefully constructed particle trajectories $x_{\e,i}(t)$ which remain close to $x_i(t)$.

Patching together sub- and supersolutions is necessary around the collision times of $\bx$. To describe this in more detail, let $T_1$ be the first collision time of \eqref{PN}. By the construction developed in \cite{GonzalezMonneau12, PatriziValdinoci15}, we can construct such $\underline v_\e$ and $\o v_\e$ up to $T_1 - \tau_\e$ for some small $\tau_\e > 0$ which vanishes as $\e \to 0$. If at $T_1$ only two particles collide, then the construction in \cite{PatriziValdinoci16} applies (with obvious modifications to the present setting where $N$ is arbitrary) to construct different sub- and supersolutions $\underline v_\e$ and $\o v_\e$ on $[T_1 - \tau_\e, T_1 + \ttau_\e]$ for some $\ttau_\e$ which vanishes as $\e \to 0$. These sub- and supersolutions are such that at $t = T_1 + \ttau_\e$  they are again of the form \eqref{v0e}, but with the two colliding particles removed. Then, we can repeat the construction in \cite{GonzalezMonneau12, PatriziValdinoci15} again for the $N-2$ surviving particles to get close to the second collision event. From here we can iterate the construction up to time $T$, provided that all collisions are between two particles only.

Our proof extends this construction to the general case, in which two difficulties need to be overcome. The first difficulty is that three or more particles can collide at the same time-space point. We treat such collisions by using the recent results in \cite{VanMeursPeletierPozarXX} which state that the orientations of the colliding particles have to be alternating and that \eqref{PN} is stable with respect to perturbations. The second difficulty is that at a single collision time two or more collisions can take place at different locations (see, e.g.\ Figure \ref{fig:trajs} at $T_1$). Since the construction on $\tau_\e$ and $\ttau_\e$ depends on $\bx$ in a neighborhood around $T_1$, we develop a careful modification of the choice of $\tau_\e$ and $\ttau_\e$ which allows for multiple collisions.

\subsection{Discussion} 

We mention three merits of Theorem \ref{t}. First, with
Theorem \ref{t} it follows from the literature that discrete dislocation dynamics emerges as the dilute dislocation limit from the fully atomistic Frenkel-Kontorova model, i.e.\ the limit in which the number of atoms in between two neighboring dislocations diverges to $\infty$. Indeed, in \cite{fino} the connection is made between the  Frenkel-Kontorova model and the Peierls-Nabarro model, and this connection shows that the number of atoms between neighboring dislocations is of order $\frac1\e$.

The second merit is that, as expected from \cite{GonzalezMonneau12,PatriziValdinoci16}, the mobility constant $c_0$ in the discrete dislocation dynamics model in \eqref{PN} does not change beyond collisions. This is an important observation, because the value of $c_0$ represents the macroscopic influence of the atomistic interaction potential $W$; see \eqref{c0} and \eqref{u}.

The third merit is that Theorem \ref{t} unifies the literature on the limit $N \to \infty$. Indeed, in \cite{VanMeursPeletierPozarXX} it is shown that limit of \eqref{PN} in terms of the function $v$ in \eqref{v}, when rescaling time and space in terms of $N$, is given by
\begin{equation} \label{HJ:formal:ov} \tag{HJ}
  \partial_t v =c_0|\partial_x v|\, \cI[v].
\end{equation}
A rigorous meaning to this PDE was developed in \cite{BKM10} in terms of viscosity solutions. The PDE in \eqref{HJ:formal:ov} also appears in the joint limit $(\e, N) \to (0, \infty)$ for the spatial rescaling in which the typical distance between dislocations is $\frac1N$, see \cite{PatriziSang21,PatriziSang22}. Thus, Theorem \ref{t} shows that both results are consistent in the sense that the sequential limit (first $\e \to 0$, then $N \to \infty$) leads to the same equation as the joint limit. We remark that for densely distributed dislocations, i.e. $\ep=1$ and $N\to+\infty$ a more complicated PDE than \eqref{HJ:formal:ov} appears in the limit; see \cite{MonneauPat12, MonneauPat12b,PatriziValdinoci15b}.
\smallskip

Finally, we mention an interesting extension of the current work. Instead of the half Laplacian $\cI$, any fractional Laplacian $-(-\Delta)^{\frac s{2}}$ with $s \in (0,2)$ can be considered. In \eqref{PN} this corresponds to the particle interaction force $f(x) = C_s x/|x|^{s+1}$ for a certain constant $C_s > 0$. Such generalized setting is considered in \cite{PatriziValdinoci15,PatriziValdinoci16}. The first step for this extension is to extend the result in \cite{VanMeursPeletierPozarXX} on \eqref{PN} to the interaction force $f(x) = C_s x/|x|^{s+1}$; this  work is in progress by the first author.

\subsection{Organization of the paper}
The paper is organized as follows. Section \ref{preliminarysec} contains the preliminaries. 
Section \ref{s:pf} is devoted to the proof of our main result, Theorem \ref{t}.  In Section \ref{s:pf:outline}  we explain  the strategy of the proof. 
In Section \ref{s:pf:0} we establish some further preliminary results. The proof of Theorem \ref{t} before the first collision time is given in Section \ref{s:pf:1}.
The proof of the theorem after the first collision time is given in Sections \ref{s:pf:15}, \ref{s:pf:2} and \ref{s:pf:3}, which consider, respectively, the case of a single collision at a single point, the case of a multiple collision at a single point, and the general case. 

\section{Preliminaries}\label{preliminarysec}

%\comm{The honest intent of this section is to present the cumbersome details that are better to be left out from the introduction}

\paragraph{Notation.} For a real-valued sequence $\alpha_\e$ parametrized by $\e > 0$, we abbreviate ``$\alpha_\e \to 0$ as $\e \to 0$" by ``$\alpha_\e = o_\e(1)$". Unless mentioned otherwise, this convergence is understood to be uniform in all other variables such as $t$ and $x$.
We use $C > 0$ as a generic constant independent from the important variables. It may change value from display to display. For $a \in \R$ and $r > 0$, we introduce the one-dimensional balls
\[
  B(a,r) := (a-r, a+r) \subset \R.
\]
We also use
\[
  a \vee b := \max \{a,b\}, \qquad
  a \wedge b := \min \{a,b\}.
\]
For a function $f : \R \to \R^d$, we denote the one-sided limits at $x \in \R$ (if they exist) by
\[
  f(x+) := \lim_{y \downarrow x} f(y)
  \quad \text{and} \quad
  f(x-) := \lim_{y \uparrow x} f(y).
\]
 Finally,  we set
\begin{equation} \label{QT:Q}
  Q := (0,\infty) \times \R.
\end{equation}
\paragraph{Properties of $\cI$.} 
For the reader's convenience, we recall that for any $\alpha, \beta, a, b, x \in \R$ and any functions $v, w : \R \to \R$ regular enough, the Laplace operator $\cI$ defined in \eqref{cI} satisfies
\begin{align*}
  \cI[\alpha v + \beta w] &= \alpha \cI[ v ] + \beta \cI[w] 
  &&\text{linearity,} \\
  \cI[\alpha] &= 0
  &&\text{invariance to constants,} \\
  \cI[v( \, \cdot + b)](x) &= \cI[v](x + b) 
  &&\text{translation invariance,}\\
  \cI[v( a \, \cdot \, )](x) &= |a| \cI[v](ax)
  &&\text{scaling,}\\
  \| \cI[v] \|_{C(\R)} &\leq 4 \| v \|_{C^{1,1}(\R)}
  &&\text{uniform bound.}
\end{align*}

\paragraph{Properties of $u$.}

We recall that $\beta \in (0,1)$ is the constant in \eqref{Wass} related to the regularity of $W$. We further set
 \beqs %\label{alpha}
 \alpha:=W''(0)>0.
 \eeqs
The following properties of $u$ are established in 
\cite[Theorem 1.2, Lemma 2.3]{CabreSolaMorales05}. %and \cite[Thm 3.1]{GonzalezMonneau12}. 
%\comm{The $C^{2,\alpha}$ regularity (stated below) in \cite[Lemma 2.3]{CabreSolaMorales05} is stated in $B_R$, but just because the PDE is satisfied only in $B_R$, there's a comment at the beginning of that section that says that this guarantees global regularity for our $u$.}
 
\begin{lem}[Properties of $u$]
\label{l:u} The solution $u$ of \eqref{u} satisfies $u\in C^{2,\beta}(\R)$. Furthermore, there exists a constant $K_1 >0$ such that
\begin{equation*} %\label{uinfinity}
\left|u(x)-H(x)+\frac{1}{\alpha \pi
x}\right|\leq \frac{K_1}{x^2} \quad\text{for }|x|\geq 1.
\end{equation*}
%and for any $x\in\R$ % \cite[Thm. 1.6]{CabreSolaMorales05}
%\begin{equation}\label{u'infinity}0<\frac{K_0}{1+x^2}\leq
%u'(x)\leq\frac{K_1}{1+x^2}.\end{equation}
%\begin{equation}\label{phi''infinity}-\frac{K_1}{1+x^2}\leq
%\phi''(x)\leq\frac{K_1}{1+x^2},\end{equation}
%\begin{equation}\label{phi'''infinity}-\frac{K_1}{1+x^2}\leq
%\phi'''(x)\leq\frac{K_1}{1+x^2}.\end{equation}
\end{lem}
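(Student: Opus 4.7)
The statement is a compilation of two results of Cabré and Sola-Morales, and I would follow their strategy, which splits into two essentially independent tasks: an interior regularity bootstrap and a two-term asymptotic expansion at infinity.

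For the regularity claim $u \in C^{2,\beta}(\R)$, I would start from the a priori bounds $0<u<1$ (which follow from the monotonicity and the limits in \eqref{u}), so that $W'(u)$ is bounded on all of $\R$. The equation $\cI u = W'(u)$ then has a bounded right-hand side, and standard interior Hölder regularity for the half-Laplacian (see \cite{Silvestre}) gives $u \in C^{0,\gamma}$ for some $\gamma>0$. Plugging this into the right-hand side and using $W \in C^{2,\beta}$ yields $W'(u) \in C^{0,\min(\gamma,\beta)}$; the Schauder-type estimate for $\cI$ upgrades $u$ by one order of differentiability. One iterates this bootstrap a finite number of times until the regularity of $W$ saturates the argument, producing $u \in C^{2,\beta}$. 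The uniformity in $x$ is free from translation invariance of $\cI$.

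For the asymptotic identity, the starting observation is the explicit computation
\[
  \cI[H](x)
  = \mathrm{PV}\int_\R \frac{H(x+z)-H(x)}{z^2}\,dz
  = -\frac{1}{x}\qquad\text{for every }x\neq0,
\]
which follows from a direct evaluation of the principal-value integral (away from $x$ the integrand is zero, and on the far side $H(x+z)-H(x)=\mp1$ on a half-line). Up to the normalization between \eqref{cI} and $-(-\Delta)^{1/2}$, this is the origin of the prefactor $1/(\alpha\pi)$. Motivated by this, I would write
\[
  u(x) = H(x) - \frac{1}{\alpha\pi\, x} + w(x)\qquad\text{for }|x|\geq 1,
\]
substitute into \eqref{u}, and expand both sides. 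On the right, periodicity of $W$ and $W''(0)=\alpha$ give $W''(1)=\alpha$, so Taylor expansion yields $W'(u) = -\alpha(1-u) + O((1-u)^2) = \tfrac{1}{\pi x} + O(x^{-2})$. On the left, $\cI[H]$ produces the matching leading term, and a direct estimate shows $\cI[1/x](x) = O(x^{-2})$ for $|x|$ large (split the integral into $|z|<x/2$, where one Taylor-expands and uses oddness to kill the $z^{-1}$ term, and $|z|\ge x/2$, where the decay of $1/(x+z)$ is integrated against $z^{-2}$). Hence the $x^{-1}$ terms on both sides cancel and $w$ satisfies a linear equation with right-hand side of order $x^{-2}$.

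The main obstacle is to close the argument and show $|w(x)|\leq K_1 x^{-2}$. The cleanest route is to study the linearized operator $\mathcal{L}w := \cI[w] - \alpha w$ at the stable equilibrium $u=1$, and to invert it on a weighted space of functions decaying like $x^{-2}$ via a contraction-mapping argument in a ball $\|x^2 w\|_\infty \le K_1$. The subtle issue is genuinely nonlocal: $\cI[w](x)$ depends on $w$ over all of $\R$, and in particular sees the far side where the linearization around $u=1$ is not valid (there $u\approx 0$). One must therefore separate the near-field contribution, controlled by the inversion of $\mathcal{L}$, from the far-field contribution, which is handled by the $z^{-2}$ decay of the kernel and the boundedness of $u$. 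An equivalent and possibly cleaner implementation, which is the one used in \cite{CabreSolaMorales05}, is to pass to the harmonic extension $U(x,y)$ of $u$ to the upper half-plane and read off the asymptotics from Poisson-kernel estimates; the corresponding statement at $-\infty$ follows either by the same argument centered at $u=0$, where again $W''(0)=\alpha$, or by applying the result to the downward profile $u(-x)-1$.
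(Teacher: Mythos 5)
The paper does not supply its own proof of this lemma; it simply cites \cite[Theorem 1.2, Lemma 2.3]{CabreSolaMorales05}, and you have correctly identified both the reference and the strategy used there. Your bootstrap for $C^{2,\beta}$ regularity is a faithful outline of that argument, and your heuristic for the coefficient $1/(\alpha\pi)$ — match $\cI[H](x)=-1/x$ against the Taylor expansion of $W'$ near the plateau value — is the right idea (modulo a sign slip: for $x>0$ large, $W'(u)=\alpha(u-1)+O((u-1)^2)=-\tfrac{1}{\pi x}+O(x^{-2})$, not $+\tfrac{1}{\pi x}$; and note that the definition in \eqref{cI} as printed is missing the $1/\pi$ normalization that this matching requires).

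The genuine gap is in the last step, the $O(x^{-2})$ bound on the remainder $w$. You correctly diagnose the difficulty — the linearization $\mathcal{L}w=\cI[w]-\alpha w$ is only valid on one side of the line, while $\cI$ sees the other side — but the proposed fix (split near/far field and run a contraction in the weighted ball $\|x^2 w\|_\infty\le K_1$) is not carried out, and it is not routine: the kernel $z^{-2}$ makes the far-field contribution exactly borderline for a weight $x^{-2}$, so one needs a comparison-function or maximum-principle argument that your sketch does not provide. There is also a technical issue in your intermediate claim $\cI[1/x]=O(x^{-2})$: the function $1/x$ is neither bounded nor $C^{1,1}$ at the origin, so $\cI$ cannot be applied to it directly; one must work with a cutoff version (which is harmless for the far-field estimate, but must be said). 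Cabr\'e and Sol\`a-Morales sidestep both problems by passing to the harmonic extension to the half-plane and reading the two-term expansion off Poisson-kernel asymptotics with the nonlinear Neumann data; you mention this route in passing but do not develop it, so as written the argument does not close.
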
 
\paragraph{The corrector $\psi$.}
As in \cite{GonzalezMonneau12} we introduce the function  $\psi$ to be the solution of
\begin{equation}\label{psi}
\left\{ \begin{aligned}
  \cI[\psi] 
  &= W''(u)\psi + \frac{W''(u) - W''(0)}\alpha  + u' 
  && \text{in } \R \\
\lim_{|x| \rightarrow \infty} \psi(x)
&=0. &&
\end{aligned} \right.
\end{equation} 
We will use $\psi$ as an $O(\e)$ correction to construct sub and  supersolutions to \eqref{HJe:formal}. 
For a detailed heuristic motivation of
equation \eqref{psi}
see  \cite[Section 3.1]{GonzalezMonneau12}.

We recall from \cite[Thm 3.2]{GonzalezMonneau12} and
\cite[Lemma 3.2]{MonneauPat12b} the following decay estimate 
on $\psi$: 
\begin{lem}[Properties of $\psi$]
\label{l:psi}
There exists a unique solution $\psi \in C^1(\R) \cap W^{1,\infty}(\R)$ to \eqref{psi}. Furthermore, there exist constants $K_2 \in \R$
and $K_3 > 0$  such that
\begin{equation*} %\label{psiinfinity}
\left|\psi(x)-\frac{K_2}{
x}\right|\leq\frac{K_3}{x^2} \quad\text{for }|x|\geq 1.
\end{equation*}
\end{lem}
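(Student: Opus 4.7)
The plan is to treat \eqref{psi} as a linear integro-differential equation $L\psi=g$, with $L\varphi:=\cI[\varphi]-W''(u)\varphi$ and $g:=\frac{W''(u)-W''(0)}{\alpha}+u'$, and to establish existence, uniqueness and the sharp $1/x$ decay in three stages. First I would collect the structural facts about $L$: differentiating the profile equation $\cI[u]=W'(u)$ yields $Lu'=0$, so $u'$ spans the kernel in the class of bounded functions; and using the expansion $u(x)=H(x)-\frac{1}{\alpha\pi x}+O(1/x^2)$ from Lemma \ref{l:u}, the periodicity $W''(1)=W''(0)=\alpha$, and $W\in C^{2,\beta}$, the coefficient $W''(u)$ converges to $\alpha>0$ at infinity, while the inhomogeneity $g$ is continuous, bounded and decays to $0$ at infinity.

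Next, for existence and uniqueness in $C^1(\R)\cap W^{1,\infty}(\R)$, I would solve the Dirichlet problem for $L$ on $B(0,R)$ and pass to the limit $R\to\infty$. Coercivity of $-L$ outside a large interval, coming from $W''(u)\to\alpha>0$, together with explicit barriers of the form $C/|x|$ for $|x|$ large, supplies a uniform $L^\infty$ bound on the approximants independent of $R$. Standard interior regularity for $\cI$ with $C^\beta$ right-hand sides then upgrades this to a uniform $C^{1,\beta}$ bound, so that Arzel\`a--Ascoli delivers a subsequential limit $\psi$. Uniqueness in the class of decaying solutions follows from a maximum-principle argument for $\cI$ applied to the difference of two candidates, using that $W''(u)\geq \alpha/2$ outside a sufficiently large interval.

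Finally, for the asymptotic $\psi(x)=K_2/x+O(1/x^2)$, I would match leading orders in \eqref{psi} using the expansion of $u$ from Lemma \ref{l:u}. Substituting the ansatz $\psi(x)=K_2/x+r(x)$ and using the quantitative fact that $\cI[1/x]$ equals $O(1/x^2)$ outside a neighborhood of the origin, I would choose $K_2$ so that the leading $1/x$ contributions on both sides of $L\psi=g$ balance, reducing the problem to $Lr=O(1/x^2)$; I would then close the argument by comparison against $\pm K_3/x^2$ on $\{|x|\geq R_0\}$, with $K_3$ chosen large to dominate both the $O(1/x^2)$ error and the boundary data at $\pm R_0$. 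The main obstacle will be this last matching step: extracting the leading $1/x$ behavior of $\cI[\psi]$ requires a careful quantitative PV-integral computation of $\cI[1/x]$ far from the origin with explicit remainder, which is precisely the content of \cite[Thm 3.2]{GonzalezMonneau12} and \cite[Lemma 3.2]{MonneauPat12b}, whose arguments I would follow.
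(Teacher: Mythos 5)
The paper does not give a proof of Lemma~\ref{l:psi}: it explicitly \emph{recalls} the result from \cite[Thm 3.2]{GonzalezMonneau12} and \cite[Lemma 3.2]{MonneauPat12b}, and you yourself point to those very references for the decisive step. So there is no in-paper argument to compare against, and your outline should be judged on its own merits as a candidate proof.

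Most of the outline is sensible, but there is a genuine gap in the uniqueness step that also undermines the exhaustion argument. You correctly observe that differentiating $\cI[u]=W'(u)$ gives $Lu'=0$, so $u'$ lies in the kernel of $L=\cI-W''(u)$. But $u'$ is itself a bounded $C^1$ function that decays to $0$ at $\pm\infty$ (indeed $u'\sim 1/(\alpha\pi x^2)$ by Lemma~\ref{l:u}), so $u'$ is a nontrivial decaying element of $\ker L$. Consequently, if $\psi$ solves \eqref{psi} then so does $\psi+cu'$ for every $c\in\R$, and a maximum-principle argument applied to the difference of two candidates cannot conclude that the difference vanishes: the function $u'$ is a decaying, sign-definite solution of $Lw=0$ and attains its maximum where $W''(u)\le 0$, so the argument ``$W''(u)\ge\alpha/2$ outside a compact set $\Rightarrow$ uniqueness'' simply does not close. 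The correct framework here is Fredholm theory: existence of a bounded decaying solution hinges on the orthogonality condition $\int_\R g\,u'\,dx=0$ (which is exactly where the normalization $c_0=1$, i.e.\ $\int(u')^2=1$, enters, together with $\int(W''(u)-\alpha)u'\,dx=-\alpha$), and uniqueness holds only modulo $\ker L=\mathrm{span}\{u'\}$, i.e.\ after fixing a normalization. This same issue makes the $R\to\infty$ exhaustion delicate: as $R$ grows, $0$ approaches the Dirichlet spectrum of $L$ on $B(0,R)$, so the uniform $L^\infty$ bound you claim does not come for free from a barrier alone; one must project out the near-kernel direction or impose the orthogonality constraint along the way. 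The asymptotic-matching part of your sketch (barriers $\pm K_3/x^2$ on $\{|x|\ge R_0\}$ after subtracting $K_2/x$) is the right idea and is indeed what the cited references carry out; just be aware that the regularity $W\in C^{2,\beta}$ only gives $|W''(u)-W''(0)|=O(|u-H|^{\beta})$ pointwise, so one must use the structure of $W''$ (or additional smoothness as in the references) to get the stated $O(1/x)$ rather than $O(1/x^{\beta})$ leading order for $g$.
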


\paragraph{Existence and comparison principle of \eqref{HJe:formal}.} 

Let us denote by $USC_b(Q)$ (recall $Q$ from \eqref{QT:Q}), respectively $LSC_b(Q)$, the set of bounded, upper semicontinuous, respectively lower semicontinuous, functions on
$Q$. Set 
$C_b(\o Q):=USC_b(Q) \cap LSC_b(Q)$. 

The definition of viscosity solutions and the following comparison theorem are given  in \cite[Definition 2.1 and Theorem 3.1]{JakoKar} for more general parabolic integro-PDEs. %\pvm{Stefania, could you add the definition too? I'd like to be precise on what we do at $t=0$.}

\begin{prop}[Comparison Principle for \eqref{HJe:formal}]\label{comparisonuep}  Let
 $u\in USC_b(\overline{Q})$ 
and $v\in LSC_b(\overline{Q})$ be respectively viscosity  sub and supersolution of \eqref{HJe:formal} in $Q$,
such that $u(0,x)\leq v(0,x)$ for all $x\in\R$. 
Then $u\leq v$ in  $Q$. 
\end{prop}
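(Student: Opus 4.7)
The proposition is presented as an instance of the general Jakobsen-Karlsen comparison framework for parabolic integro-PDEs, so my plan is first to verify that \eqref{HJe:formal} fits that framework and then to sketch the doubling-of-variables argument it supplies. The operator $\cI$ is linear and symmetric with Lévy kernel $dz/z^2$ (PV at the origin), which is an admissible kernel in the sense of \cite{JakoKar}. The reaction term $\frac{1}{\e}W'(\cdot)$ is only locally Lipschitz, but since $u\in USC_b$ and $v\in LSC_b$ take values in a bounded subset of $\R$, $W'$ is Lipschitz on the relevant range; the standard rescaling $u\leftarrow e^{-\lambda t}u$, $v\leftarrow e^{-\lambda t}v$ with $\lambda$ large then turns the non-monotone zero-order part into a proper one, which is what \cite{JakoKar} requires.

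Next I would run the classical doubling of variables. Assuming for contradiction that $M:=\sup_{\overline{Q}}(u-v)>0$, I would introduce
\[
\Phi_{\alpha,\eta,\delta}(t,x,s,y):=u(t,x)-v(s,y)-\frac{\alpha}{2}\bigl((t-s)^2+(x-y)^2\bigr)-\eta(x^2+y^2)-\delta(t+s),
\]
whose supremum $M_{\alpha,\eta,\delta}$ is attained (by semicontinuity and boundedness) at some $(\hat t,\hat x,\hat s,\hat y)$, stays positive for $\eta,\delta$ small, and satisfies $\alpha[(\hat t-\hat s)^2+(\hat x-\hat y)^2]\to 0$ as $\alpha\to\infty$. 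Because $u(0,\cdot)\leq v(0,\cdot)$ and the penalty $\delta(t+s)$ pushes the maximum into $\{t,s>0\}$, one may freely apply the integro-differential Crandall-Ishii-Jensen lemma of \cite[Thm.\ 3.1]{JakoKar} at $(\hat t,\hat x,\hat s,\hat y)$, obtaining quadratic jets along with nonlocal evaluations split at radius $\kappa>0$ as $\cI[\,\cdot\,]=\cI_\kappa[\,\cdot\,]+\cI^\kappa[\,\cdot\,]$. Plugging these into the sub- and supersolution inequalities for $u$ and $v$, subtracting, and taking a suitable joint limit in $\kappa$ and $\alpha$ would yield a bound $\delta\leq C\,M_{\alpha,\eta,\delta}$ with $M_{\alpha,\eta,\delta}\to M$; sending $\delta,\eta\to 0$ contradicts $M>0$.

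The main obstacle here is the nonlocal term, because the doubling argument controls $u-v$ only at the contact point while $\cI[v_\e]$ depends on values of $v_\e$ everywhere on $\R$. This is resolved as in \cite{JakoKar} by the split at radius $\kappa$: the near piece $\cI_\kappa$ is bounded by a constant times the $C^{1,1}$-seminorm of the quadratic penalty (which is explicit in $\alpha$), while the tail piece $\cI^\kappa$ is bounded by $\|u-v\|_\infty\cdot\int_{|z|>\kappa}dz/z^2 = 2\|u-v\|_\infty/\kappa$; a careful simultaneous limit $\kappa,1/\alpha\to 0$ absorbs both contributions. A secondary and routine point is the non-monotone $v_\e$-dependence through $W'$, which is handled by the exponential rescaling together with a Gronwall step at the end.
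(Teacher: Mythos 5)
The paper does not actually prove this proposition: it records it as an instance of the general comparison theorem of Jakobsen and Karlsen, citing \cite[Definition 2.1 and Theorem 3.1]{JakoKar}. Your proposal takes the same route — verify that \eqref{HJe:formal} fits the cited framework — and then goes further by sketching the doubling-of-variables machinery behind that theorem. The verification part is what actually matters here, and you have it right: the L\'evy kernel $dz/z^2$ is symmetric and admissible, $W'$ is globally Lipschitz (indeed $W$ is $C^{2,\beta}$ and $1$-periodic, so you do not even need the boundedness of $u,v$ for this), and the exponential change of variable $u\mapsto e^{-\lambda t}u$ with $\lambda\e > \|W''\|_\infty/\e$ turns the non-proper zero-order term into a proper one, which is the hypothesis the cited result needs. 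So the approach matches the paper's; you simply supply more detail than the paper does.

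Two small technical points in the sketch are worth correcting, though neither undermines the plan. First, the penalty $-\delta(t+s)$ does \emph{not} push the maximizer into $\{t,s>0\}$: being decreasing in $t+s$, it pushes toward $t=s=0$ (its real purposes are to ensure the supremum is attained on the unbounded time interval and to inject strictness into the viscosity inequalities). What excludes the boundary is the hypothesis $u(0,\cdot)\leq v(0,\cdot)$ combined with the standard fact that the doubling penalty forces $\hat x-\hat y\to 0$ and $\hat t-\hat s\to 0$; if, say, $\hat t=0$ along a subsequence, the maximum value would be controlled by $\limsup\big[v(0,\hat x)-v(\hat s,\hat y)\big]\leq 0$, contradicting $M_{\alpha,\eta,\delta}\to M>0$. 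Second, the closing inequality you state, $\delta\leq C\,M_{\alpha,\eta,\delta}$ with $M_{\alpha,\eta,\delta}\to M$, is consistent with $M>0$ and yields no contradiction. After the exponential rescaling, subtracting the two viscosity inequalities actually gives something of the shape
\[
2\delta+\lambda\,M_{\alpha,\eta,\delta}\;\leq\;\frac{C_{W}}{\e^{2}}\,M_{\alpha,\eta,\delta}+o_{\alpha}(1)+(\text{nonlocal error}),
\]
and it is the coercive $\lambda$-term, with $\lambda$ chosen larger than $C_W/\e^2$, that absorbs the Lipschitz contribution from $W'$; after sending $\kappa\to 0$ and $\alpha\to\infty$ one is left with $2\delta\leq 0$, contradicting $\delta>0$. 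The contradiction is thus in $\delta$, not in $M$ directly; $M>0$ is only used to guarantee that the penalized maximum is interior so the viscosity inequalities can be invoked.
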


\begin{prop}[Existence for \eqref{HJe:formal}-\eqref{v0e}]\label{existuep}For $\ep>0$ there exists a viscosity solution $v_{\ep}\in C_b(\o Q)$ of
\eqref{HJe:formal}-\eqref{v0e}. Moreover, there exists $C>0$ such that $\| v_\e \|_{C(Q)} \leq C$ uniformly in $\e$.
\end{prop}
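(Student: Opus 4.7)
The plan is a textbook Perron/Ishii construction for the nonlocal parabolic equation \eqref{HJe:formal}, resting on the comparison principle of Proposition \ref{comparisonuep} together with a family of constant barriers coming from the periodic structure of $W$. The uniform bound is immediate: by \eqref{Wass}, $W$ is $1$-periodic, nonnegative and $W(0)=0$, so every $n\in\Z$ is a global minimum of $W$ and hence $W'(n)=0$. Consequently each constant $v(t,x)\equiv n$ is simultaneously a classical sub- and supersolution of the PDE in \eqref{HJe:formal}. Since $|u(\,\cdot\,;b_i)|\le 1$ for $i=1,\dots,N$ and $\|\phi_\e^0\|_\infty=o_\e(1)$, Assumption \ref{a:v0e} gives $|v_\e^0|\le N+1$ on $\R$ for $\e$ small enough; choosing integers $n_\pm$ with $n_-\le -N-1$ and $n_+\ge N+1$, Proposition \ref{comparisonuep} squeezes any viscosity solution between $n_-$ and $n_+$, yielding $\|v_\e\|_{C(Q)}\le N+1$ independently of $\e$.

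For existence I would define
\[
v_\e(t,x):=\sup\bigl\{w(t,x) : w\in USC_b(\o Q),\ w \text{ is a subsolution of } \eqref{HJe:formal} \text{ in } Q,\ w(0,\cdot)\le v_\e^0\bigr\}.
\]
The constant $n_-$ belongs to this admissible family so the supremum is nonempty, and by the bound above it is majorized by $n_+$. The standard Ishii argument (bump perturbation at a strict maximum of $(v_\e)^*$, reducing to a violation of the subsolution inequality) combined with Proposition \ref{comparisonuep} shows that $(v_\e)^*$ is a bounded subsolution and $(v_\e)_*$ a bounded supersolution of \eqref{HJe:formal} in $Q$; this part is routine within the integro-PDE framework of \cite{JakoKar}. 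To attain the initial data continuously I construct linear-in-time barriers $w_\e^\pm(t,x):=v_\e^0(x)\pm K_\e t$: because $v_\e^0\in C^{1,1}(\R)$, the uniform bound $\|\cI[v_\e^0]\|_\infty\le 4\|v_\e^0\|_{C^{1,1}(\R)}$ recorded in the preliminaries, together with the boundedness of $W'$ (which is periodic), allows me to pick $K_\e$ large enough that $w_\e^+$ is a classical supersolution and $w_\e^-$ a classical subsolution. Comparing these barriers with $(v_\e)_*$ and $(v_\e)^*$ pins $(v_\e)^*(0,\cdot)=(v_\e)_*(0,\cdot)=v_\e^0$, and a final application of Proposition \ref{comparisonuep} gives $(v_\e)^*\le (v_\e)_*$ on $Q$, so $v_\e\in C_b(\o Q)$ is the sought viscosity solution.

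I do not anticipate a substantive obstacle. The only delicate point is that the initial-time barrier $w_\e^\pm$ depends on $\|v_\e^0\|_{C^{1,1}(\R)}$, which blows up like $\e^{-2}$ as $\e\to 0$ because of the $\frac{1}{\e}$ scaling in \eqref{v0e}, forcing $K_\e$ to be $\e$-dependent and, in fact, to diverge. This is harmless, however, because uniformity in $\e$ is required only for the $L^\infty$ estimate, and that bound is supplied independently by the integer barriers $n_\pm$, which do not involve $K_\e$.
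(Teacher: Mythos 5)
Your proposal is correct and follows essentially the same route as the paper's proof: Perron's method with the linear-in-time barriers $v_\e^0 \pm C_\e t$ (using the $C^{1,1}$ bound on $v_\e^0$ and the boundedness of $W'$ to make these classical sub/supersolutions), together with comparison against integer constants to get the $\e$-uniform $L^\infty$ bound. The paper states this more tersely, but the barriers, the role of Proposition \ref{comparisonuep}, and your observation that the divergence of $C_\e$ as $\e \to 0$ is harmless all match the paper's argument.
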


%\pvm{I resurfaced the following proof from the comments. Either we keep it like this, or we put se reference to one of your paper where this proposition is proven. }

\begin{proof}
We can construct a
solution by Perron's method if we can construct sub and supersolutions
of \eqref{HJe:formal} which are equal to $ v_\e^0 (x)$ at $t= 0$. Since by Lemma \ref{l:u} $ v_\e^0 \in C^{1,1}(\R)$, 
 the two functions
$u^{\pm}(t,x):= v_\e^0 \pm C_\e t$ with  
$$C_\e\geq \frac{4}{\e}\| v_\e^0 \|_{C^{1,1}(\R)}+\frac{1}{\e^2}\|W'\|_\infty$$
are respectively a super and a
subsolution of \eqref{HJe:formal}. Moreover $u^+(0,x)=u^-(0,x)= v_\e^0 (x)$.

Finally, by comparison with constant solutions of \eqref{HJe:formal} with value in $\Z$ we conclude from $\| v_\e^0 \|_{C(\R)} \leq C$ uniformly in $\e$ that $\| v_\e \|_{C(Q)} \leq C$ uniformly in $\e$.
\end{proof}

\paragraph{The solution of \eqref{PN} and its properties.} 

For the following definition we recall $\Omega^N$ from \eqref{Om}. 
\begin{defn}[Solution to \eqref{PN}] \label{d:PN:sol}
Given $\bb \in \{-1, +1\}^N$ and $\bx^0 \in \Omega^N$, we call $\bx := \{ x_i : [0,T_{k_i}] \to \R \}_{i=1}^N$ a solution to \eqref{PN} with respect to the initial condition $\bx^0$ if for any $T>0$ there exist indices $k_i \in \{1, \ldots, K+1\}$ and collision times $\cT := \{ T_1, \ldots, T_K \}$ with $0 =: T_0 < T_1 < \ldots < T_K \leq T_{K+1} := T$  such that (set $S_t := \{ i : T_{k_i} > t \}$):
\begin{enumerate}[label=(\roman*)]
  \item $x_i \in C^1( (0,T_{k_i}) \setminus \cT ) \cup C^0([0,T_{k_i}])$ for all $1 \leq i \leq N$,
  \item $x_i(0) = x_i^0$ for all $1 \leq i \leq N$,
  \item \label{d:PN:sol:order} $\{ x_i(t) \}_{i \in S_t} \in \Omega^{\# S_t}$ for all $t > 0$,
  \item $\{ x_i(t) \}_{i \in S_t}$ satisfies the ODE in \eqref{PN} on $(T_k,T_{k+1})$ for all $0 \leq k \leq K$,
  \item \label{d:PN:sol:annih} for each $t \in \cT$, the set $S_{t-} \setminus S_{t}$ is non-empty and can be written as a disjoint union of sets $\{i,j\}$ for which $x_i(t) = x_j(t)$ and $b_i b_j = -1$.
\end{enumerate}
\end{defn}

Definition \ref{d:PN:sol} is technical; we refer to Figure \ref{fig:trajs} for a visual representation, and give several remarks below to parse it:
\begin{itemize}
  \item $T_{k_i}$ is the annihilation time of $x_i$. It is part of the solution concept.
  \item The set of surviving particles $S_t$ is, as a function of $t$, constant on $[T_k, T_{k+1})$ for each $k$, and $S_s \supset S_t$ for all $s < t$. 
  \item Thanks to the strict ordering required in \ref{d:PN:sol:order}, the right-hand side of \eqref{PN} is defined at each $t >0$. Note that \ref{d:PN:sol:order} puts a strong restriction on the solution at collisions. Indeed, at collisions the strict ordering breaks down. Hence, for \ref{d:PN:sol:order} to hold at collisions, it is therefore required that at most one of the colliding particles survives.
  \item Condition \ref{d:PN:sol:annih} takes the opposite role of (iii). Whereas (iii) is easy to satisfy if we are free to annihilate particles at will, \ref{d:PN:sol:annih} put restrictions on the situation at which particles can be annihilated. In \ref{d:PN:sol:annih}, $S_{t-} \setminus S_{t}$ is the set of particles which get annihilated at $t$. The word ``disjoint" prevents that too many particles are annihilated. Without this wording, it would be possible for the example in Figure \ref{fig:trajs} that at $t = T_3$ all three particles annihilate.
  \item Note the subtle difference between \textit{collision} times and \textit{annihilation} times. An annihilation time (in Definition \ref{d:PN:sol} denoted by $T_{k_i}$) is a property of a particle $x_i$; it is the time at which $x_i$ is annihilated, i.e.\ when $i$ is taken away from $S_t$. A collision time (in Definition \ref{d:PN:sol} denoted by $T_k$) is a time point at which two or more particles collide. Therefore, an annihilation time is always a collision time, but even when a particle $x_i$ collides (necessarily at a collision time), it need not be the annihilation time for $x_i$ (see, for instance, the surviving particles in Figure \ref{fig:trajs} at the 3-particle collisions). Other than in Definition \ref{d:PN:sol} we do not make use of annihilation times.
\end{itemize}

Whenever convenient, we follow the convention to extend the domain of $x_i$ beyond its annihilation time $T_{k_i}$ to $\infty$ by setting $x_i(t) := x_i(T_{k_i})$ for all $t > T_{k_i}$. 

We do not expect uniqueness for the solution $\bx$, because at a collision between an odd number of particles the index of the particle which survives can be chosen freely (see, e.g., Figure \ref{fig:trajs}, where we can construct a different solution by taking $S_t = \{9\}$ for $t \geq T_3$.).
We say that a solution $\bx$ to \eqref{PN} with respect to $\bx^0$ is unique if the union of the graphs of $x_i |_{[0, T_{k_i}]}$ are unique.

\begin{prop}[{\cite[Thm.\ 2.4]{VanMeursPeletierPozarXX}}] \label{p:vMPP}
For  $N \geq 1$, $\bb \in \{-1, +1\}^N$ and $\bx^0 \in \Omega^N$, there exists a unique solution $\bx$ to \eqref{PN}. Moreover, at any time-space collision point $(T_k,y)$ the orientations of the colliding particles are alternating.
\end{prop}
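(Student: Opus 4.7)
The plan is to build the solution inductively on the number of particles $N$, alternating between smooth propagation and annihilation at collision times. The crux of the argument is showing that orientations must alternate at any multi-particle collision; granted this, the annihilation rule in Definition~\ref{d:PN:sol}\ref{d:PN:sol:annih} becomes unambiguous (up to the choice of survivor at odd collisions), and the dynamics can be restarted with strictly fewer particles.

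\medskip

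\textbf{Smooth evolution up to the first collision.} The right-hand side of \eqref{PN} is smooth on the open set $\Omega^N$, so Picard--Lindel\"of yields a unique $C^1$ solution on a maximal interval $[0,T_1)$ with $T_1 \in (0,\infty]$, along which $\bx(t) \in \Omega^N$: any loss of strict ordering would require a prior pair crossing, which is incompatible with the interaction singularity. If $T_1 = \infty$ we are done; otherwise, standard extension arguments (the velocities are controlled as long as every inter-particle distance is bounded below) force at least one pair of particles to coalesce as $t \to T_1^-$.

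\medskip

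\textbf{Alternation at a multi-particle collision.} Let $I = \{i_1 < \ldots < i_m\}$ index the particles whose positions converge to a common point $y$ as $t \to T_1^-$; strict ordering forces $I$ to be consecutive in $\{1,\ldots,N\}$. Suppose for contradiction that some adjacent pair in $I$ satisfies $b_{i_k} = b_{i_{k+1}}$, and set $d(t) := x_{i_{k+1}}(t) - x_{i_k}(t)$. Differentiating and using the partial-fraction identity together with $b_{i_k}=b_{i_{k+1}}$ produces
\begin{equation*}
d'(t) \;=\; \frac{2 c_0}{d(t)} \;-\; c_0\, b_{i_k}\, d(t) \sum_{j \neq i_k,\, i_{k+1}} \frac{b_j}{(x_{i_{k+1}}(t) - x_j(t))\,(x_{i_k}(t) - x_j(t))}.
\end{equation*}
Terms with $j \notin I$ remain bounded on $[0,T_1]$ since such $x_j$ stay away from $y$. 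For $j \in I$, one further exploits that the total spread $x_{i_m}-x_{i_1}$ of the colliding set, and more generally the spread of any same-sign subcluster, is itself governed by an analogous identity and is nondecreasing, so not all intra-cluster distances can shrink at the same rate. Choosing a same-sign adjacent pair of minimal separation and iterating this observation yields a differential inequality of the form $d'(t) \geq c_0/d(t) - C$ on a left neighbourhood of $T_1$, from which $d$ stays uniformly bounded below by a positive constant, contradicting $d(t) \to 0$.

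\medskip

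\textbf{Restart and uniqueness.} With alternation established, $I$ decomposes into disjoint adjacent pairs of opposite orientation (plus at most one leftover particle), giving a valid annihilation assignment consistent with Definition~\ref{d:PN:sol}\ref{d:PN:sol:annih}. The surviving configuration at $T_1^+$ lies in some $\Omega^{N'}$ with $N' < N$, and the above construction can be iterated; since every collision event strictly reduces the particle count, only finitely many iterations are needed to reach an arbitrary finite horizon $T$. Uniqueness of the union of graphs follows from ODE uniqueness on each smooth phase combined with the observation that at odd collisions the only remaining freedom is the label of the survivor, which does not affect the geometric trajectories. The main obstacle is the alternation step: the singular contributions from several simultaneously-colliding same-sign particles must be controlled in concert, and it is the spread-monotonicity argument on same-sign subclusters that closes the estimate.
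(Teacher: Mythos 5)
The paper does not actually prove Proposition~\ref{p:vMPP}; the statement is imported verbatim (and without proof) from \cite[Thm.~2.4]{VanMeursPeletierPozarXX}. So there is no ``paper's own proof'' against which to compare your argument---I can only judge it on its own merits. Your overall skeleton (local existence and uniqueness by Picard--Lindel\"of on the open strict-ordering region, the alternation property as the crux, restart after annihilation, finite induction on the particle count, and uniqueness modulo the survivor label at odd collisions) is the natural one and matches the structure of the result being cited. The partial-fraction identity for $d' = \dot x_{i_{k+1}} - \dot x_{i_k}$ is also correct.

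However, the alternation step---which you yourself flag as ``the main obstacle''---is genuinely incomplete, and one of the claims it rests on is false as stated. Specifically, the assertion that ``the spread of any same-sign subcluster \ldots is nondecreasing'' does not hold: take a maximal same-sign run $\{a,\ldots,a+m\}$ and a further particle $x_j$ with $j<a-1$ carrying the \emph{same} sign as the run. Its contribution to $\tfrac{d}{dt}\bigl(x_{a+m}-x_a\bigr)$ is
\[
\frac{1}{x_{a+m}-x_j} - \frac{1}{x_a-x_j} < 0,
\]
and if $x_j$ is itself part of the colliding cluster, this negative term is not bounded and cannot be absorbed into the constant $C$. So the differential inequality $d'\geq c_0/d - C$ is asserted but not established. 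The phrase ``iterating this observation'' and the choice of a ``minimal-separation same-sign adjacent pair'' gesture at a strategy, but the closing estimate---that the bad (same-sign, in-cluster) contributions are dominated by the repulsive $2c_0/d$ term---requires a quantitative bookkeeping over all members of the cluster that is not supplied and does not follow from minimality alone. In short: the smooth-phase, restart, and uniqueness steps are fine, but the sign-alternation claim, which is the whole point of the proposition, is left at the level of a plausible heuristic rather than a proof.
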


In addition to Proposition \ref{p:vMPP}, \cite[Theorem 2.4(vi)]{VanMeursPeletierPozarXX} provides a stability result of \eqref{PN} with respect to perturbations of the initial data. For our purposes, we require a modification of this stability result. We state it below in Lemma \ref{l:obx}.

\section{Proof of Theorem \ref{t}}
\label{s:pf}

The second inequality in Theorem \ref{t} is obvious, and the first and third can be proven in a similar manner. Therefore, it is sufficient to prove 
\begin{equation} \label{pf:zz}
  {\limsup}^* v_\e \leq v^*
   \qquad \text{on } {[0,T) \times \R}
\end{equation}
for all $T > 0$. In the remainder we take $T$ arbitrary, and assume it to be large enough whenever convenient.

\subsection{Outline of the proof of \eqref{pf:zz} }
\label{s:pf:outline}
The proof resembles those from \cite{GonzalezMonneau12, PatriziValdinoci15,PatriziValdinoci16}. We apply an induction argument over the finitely many collision times of the solution $\bx$ to \eqref{PN}. At each induction step, we construct and tie together several supersolutions of \eqref{HJe:formal} on subsequent time intervals (see Figure \ref{fig:timeline} for an overview), which all lie above $v_\e$ within an $o_\e(1)$ height distance. Most of these supersolutions are constructed from the Gonzalez-Monneau Lemma (Lemma \ref{l:vebar}) for different choices of the parameters and initial conditions. These supersolutions are of the form
\begin{equation} \label{supersol:prototype} 
\sum_{i = 1}^N u\left(\displaystyle \frac{x - x_{\e,i}(t)}{\e} ; b_i\right) +  o_\e(1),
\end{equation}
where $u$ is the phase-transition profile defined by \eqref{u}, $o_\e(1)$ is a perturbation term which may depend on $t,x$, and $x_{\e,i}(t)$ are carefully constructed perturbations of $x_i(t)$.

\begin{figure}[ht]
\centering
\begin{tikzpicture}[scale=1, >= latex]
\def \a {0.1}

\draw[->] (0,0) -- (11,0) node[below] {$t$};
\foreach \x in {0,1,3,4,5,8,9,10}
  \draw (\x,-\a) --++ (0,2*\a);
\foreach \x in {0,4,9}
  \draw[dotted] (\x,0) --++ (0,1);

\draw (0,-\a) node[below]{$0$};  
\draw (1,-\a) node[below]{$\tT_0$}; 
\draw (3,-\a) node[below]{$\o T_1$}; 
\draw (4,-\a) node[below]{$\hat T_1$}; 
\draw (5,-\a) node[below]{$\tT_1$}; 
\draw (8,-\a) node[below]{$\o T_2$}; 
\draw (9,-\a) node[below]{$\hat T_2$}; 
\draw (10,-\a) node[below]{$\tT_2$}; 

\draw (.5,0) node[above]{$\tv_\e$};
\draw (2,0) node[above]{$\o v_\e$};
\draw (3.5,0) node[above]{$\hat v_\e^3$};

\draw[<->] (4,.3) --++ (1,0) node[midway, above]{$o_\e(1)$};
\draw[<->] (5,.3) --++ (3,0) node[midway, above]{$O(1)$};
\draw[<->] (8,.3) --++ (1,0) node[midway, above]{$o_\e(1)$};
  
\end{tikzpicture} \\
\caption{Time axis tessellated in intervals on which we construct the supersolutions $\tv_\e$, $\o v_\e$ and $\hat v_\e^3$. The dotted lines indicate different steps in the induction argument. For each $k$, $\o T_k \leq \hat T_k < \tT_k$ are all asymptotically close to the collision times $T_k$ of $\bx$ as $\e \to 0$, but none need to be equal to $T_k$.}
\label{fig:timeline}
\end{figure}
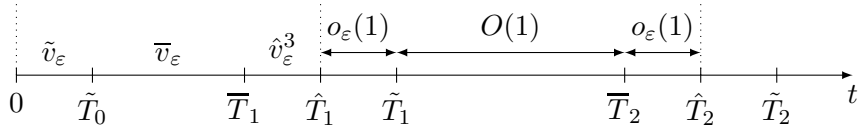

Next we describe the outline of the construction in more detail. We start with some preparation. Let
$ 0 < T_1 < T_2 < \ldots < T_K$
be all the collision times of the solution $\bx$ to \eqref{PN}. We assume  for convenience that $T > T_K$ and set $T_0 := 0$ and $T_{K+1} := T$. For the sake of simplicity, we assume in the outline below that $b_1 = 1 = -b_2$ and that at $T_1$ only the two particles $x_1$ and $x_2$ collide.

The first supersolution which we construct is $\tv_\e$ (defined precisely later in \eqref{pf:za}); see Figures \ref{fig:ve:0} and \ref{fig:ve:tT} for a sketch. More precisely, we require
\begin{subequations} \label{pf:zw} 
\begin{align} \label{pf:zw:IC}
\tv_\e(0, \cdot) &
  \geq v_\e^0
  &&\text{on } \R \\\label{pf:zw:supersol}
  \tv_\e 
  &\geq v_\e
  && \text{on } [0, \tT_0] \times \R
\end{align}
\end{subequations}
for all $\e > 0$ small enough and for some $0 < \tT_0 = o_\e(1)$. 
The motivation of this construction is to control the perturbation $\phi_\e^0$ to the initial datum. As one would expect from Allen-Cahn type equations, the contribution of the term $\phi_\e^0$ will become $o(\e)$ after a time interval of size $o_\e(1)$. We will construct $\tv_\e$ of the form \eqref{supersol:prototype} such that it is $o(\e)$ close to integer values away from transition layers at $t = \tT_0$. 

\begin{figure}[ht]
\centering
\begin{tikzpicture}[scale=1.8, >= latex]
\draw[->] (0,-.25) -- (0,1.25);
\draw[->] (-.25,0) -- (5.25,0) node[right] {$x$};
\draw (0,1) node[left]{$1$} --++ (5.25,0)  node[anchor = south east] {$t = 0$};

\showfig{
\draw[domain=-.25:5.25, smooth, thick, red] plot (\x,{ .5*( atan( 10*(\x - 1) )/90 + 1 ) - .5*( atan( 10*(\x - 4) )/90 + 1 ) + .15*sin(\x * 1000) });
\draw[domain=-.25:5.25, smooth, thick, blue] plot (\x,{ .5*( atan( 10*(\x - .75) )/90 + 1 ) - .5*( atan( 10*(\x - 4.25) )/90 + 1 ) + .2 });
}

\draw[<-] (2.28,1) --++ (0,-.3) node[below] {$o_\e(1)$};
\draw[<-] (2.28,1.2) --++ (0,.3);
\draw[<-] (.75,.4) --++ (-.3,0);
\draw[<-] (1,.4) --++ (.3,0) node [right]{$o_\e(1)$};

\draw[red] (1,0) node[below]{$x_1$} --++ (0,.55);
\draw[blue] (.75,0) node[below]{$\tx_1$} --++ (0,.65);
\draw[red] (4,0) node[below]{$x_2$} --++ (0,.45);
\draw[blue] (4.25,0) node[below]{$\tx_2$} --++ (0,.65);
\draw[red] (3.75, .5) node{$v_\e$};
\draw[blue] (4.5, .7) node{$\tv_\e$};

%\begin{scope}[shift={(2,3)},scale=1] 
%    \draw[thick, red] (0,0) -- (0,.5);
%\end{scope}
\end{tikzpicture} \\
\caption{A sketch of $v_\e^0$ and $\tv_\e(0, \cdot)$. }
\label{fig:ve:0}
\end{figure}

\begin{figure}[ht]
\centering
\begin{tikzpicture}[scale=1.8, >= latex]
\draw[->] (0,-.25) -- (0,1.25);
\draw[->] (-.25,0) -- (5.25,0) node[right] {$x$};
\draw (0,1) node[left]{$1$} --++ (5.25,0)  node[anchor = south east] {$t = \tT_0$};

\showfig{
\draw[domain=-.25:5.25, smooth, thick, blue] plot (\x,{ .5*( atan( 10*(\x - 1) )/90 + 1 ) - .5*( atan( 10*(\x - 4) )/90 + 1 ) + .08 });
\draw[domain=-.25:5.25, smooth, thick, red] plot (\x,{ .5*( atan( 10*(\x - .75) )/90 + 1 ) - .5*( atan( 10*(\x - 4.25) )/90 + 1 ) + .12 });
}

\draw[<-] (2.5,1) --++ (0,-.3) node[below] {$o(\e)$};
\draw[<-] (2.5,1.1) --++ (0,.3);
\draw[<-] (.75,.4) --++ (-.3,0);
\draw[<-] (1,.4) --++ (.3,0) node [right]{$o_\e(1)$};

\draw[blue] (1,0) node[below]{$\tx_1$} --++ (0,.55);
\draw[red] (.75,0) node[below]{$\o x_1$} --++ (0,.6);
\draw[blue] (4,0) node[below]{$\tx_2$} --++ (0,.55);
\draw[red] (4.25,0) node[below]{$\o x_2$} --++ (0,.6);
\draw[blue] (3.75, .5) node{$\tv_\e$};
\draw[red] (4.5, .7) node{$\o v_\e$};

%\begin{scope}[shift={(2,3)},scale=1] 
%    \draw[thick, red] (0,0) -- (0,.5);
%\end{scope}
\end{tikzpicture} \\
\caption{A sketch of $\tv_\e$ and $\o v_\e$ at $t = \tT_0$. $\tT_0 = o_\e(1)$ is chosen to be large enough such that $\tv_\e$ is $o(\e)$-close to integer-values away from the transitions around $\tx_1$ and $\tx_2$.}
\label{fig:ve:tT}
\end{figure}
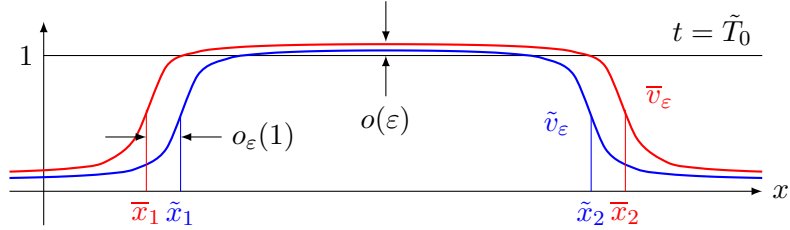

With the upper bound $v_\e(\tT_0, x) \leq \tv_\e (\tT_0, x)$, we have enough control to construct a different supersolution $\o v_\e$  which remains close enough to $v_\e$ almost up to $T_1$. The supersolution $\o v_\e$ is of the form \eqref{supersol:prototype},is  defined precisely in \eqref{pf:yz}, and is illustrated partially in Figures \ref{fig:ve:tT} and \ref{fig:ve:oT}. More precisely, we construct $\o v_\e$ such that
\begin{subequations} \label{pf:zy}
\begin{align} \label{pf:zy:IC}
  \o v_\e(\tT_0, \cdot) &
  \geq \tv_\e (\tT_0, \cdot)
  &&\text{on } \R \\\label{pf:zy:supersol}
  \o v_\e 
  &\geq v_\e  
  && \text{on } [\tT_0, \o T_1] \times \R
\end{align}
for all $\e$ small enough, and
%\sp{I think that since $\o v_\e$ is defined in $[\tT_0,\o T_1]$, we need to rewrite the following formula}
\begin{equation} \label{pf:zy:limp}
  {\limsup}^* \o v_\e \leq v^*
  \qquad \text{on } (0, T_1) \times \R.
\end{equation}
%\sp{in the following way: for any $t\in(0,T_1)$ and any $x\in\R$,  if  $t_\ep\to t$, $x_\ep\to x$ as $\ep\to 0$ and $\tT_0\le t_\ep\le \o T_1$, then 
%\begin{equation} \label{pf:zy:limp}
 % {\limsup}_{\ep\to0} \o v_\e(t_\ep,x_\ep) \leq v^*(t,x).
%\end{equation}}
\end{subequations}
Inequalities \eqref{pf:zy:supersol} and \eqref{pf:zy:limp} imply the desired \eqref{pf:zz} for all $t\in(0,T_1)$ and all $x\in\R$.

\begin{figure}[ht]
\centering
\begin{tikzpicture}[scale=1.8, >= latex]
\draw[->] (0,-.25) -- (0,1.25);
\draw[->] (-.25,0) -- (6.5,0) node[right] {$x$};
\draw (0,1) node[left]{$1$} --++ (6.5,0)  node[anchor = south east] {$t = \o T_1$};

\showfig{
\draw[domain=-.25:2.5, smooth, thick, red] plot (\x,{ .5*( atan( 10*(\x - 2) )/90 + 1 ) - .5*( atan( 10*(\x - 3) )/90 + 1 ) + .08 });
\draw[domain=2.5:5.25, smooth, thick, red] plot (\x,{ .5*( atan( 10*(\x - 2) )/90 + 1 ) - .5*( atan( 10*(\x - 3) )/90 + 1 ) + .08 });
\draw[domain=5.25:6.5, smooth, thick, red] plot (\x,{ .5*( atan( 10*(\x - 2) )/90 + 1 ) - .5*( atan( 10*(\x - 3) )/90 + 1 ) + .08 });
\draw[domain=-.25:6.5, smooth, thick, blue] plot (\x,{ .5*( atan( 10*(\x - 1) )/90 + 1 ) - .5*( atan( 10*(\x - 5.5) )/90 + 1 ) + .08 });
}

\draw[<->] (1,.3) --++ (1,0) node[midway, above]{$\Theta_\e$};
\draw[<->] (2,.3) --++ (1,0) node[midway, above]{$\Theta_\e$};
\draw[<->] (3,.3) --++ (2.5,0) node[midway, above]{$L \Theta_\e$};

\draw[blue] (1,0) node[below]{$\hat x_1$} --++ (0,.55);
\draw[red] (2,0) node[below]{$\o x_1$} --++ (0,.55);
\draw[blue] (5.5,0) node[below]{$\hat x_2$} --++ (0,.55);
\draw[red] (3,0) node[below]{$\o x_2$} --++ (0,.55);
\draw[red] (3.25, .5) node{$\o v_\e$};
\draw[blue] (5.75, .5) node{$\hat v_\e^1$};

%\begin{scope}[shift={(2,3)},scale=1] 
%    \draw[thick, red] (0,0) -- (0,.5);
%\end{scope}
\end{tikzpicture} \\
\caption{A sketch of $\o v_\e$ and $\hat v_\e^1$ (defined later in \eqref{pf:zf}) at $t = \o T_1$. $\o T_1$ is such that $\o x_2 - \o x_1 =: \Theta_\e = o_\e(1)$, and $L > 1$ controls the asymmetry in $\hat v_\e^1$.}
\label{fig:ve:oT}
\end{figure}
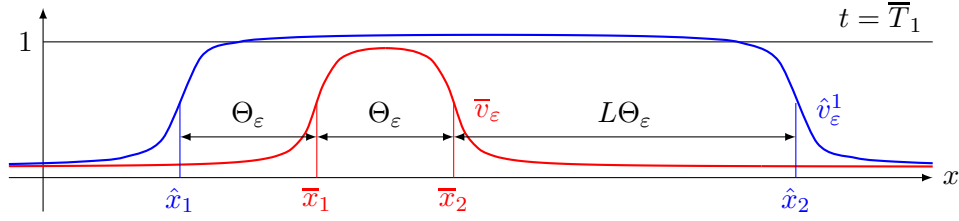
  
We have insufficient control on $\o v_\e$ to ensure that it remains above $v_\e$ during the collision. To go beyond the collision, we construct from $\o T_0$ onwards a different supersolution $\hat v_\e^3$. This construction is ingenious; $\hat v_\e^3$ will be the minimum of two supersolutions $\hat v_\e^1, \hat v_\e^2$ of  \eqref{HJe:formal} of the form \eqref{supersol:prototype} which do \emph{not} resemble a collision. The idea of this construction is illustrated in Figures \ref{fig:ve:oT} and \ref{fig:ve:hatT}. We put the colliding particles further apart in an asymmetric manner such that $\hat v_\e^1$ and $\hat v_\e^2$ can evolve a little while longer (until $\hat T_1 > \o T_1$ with $\hat T_1 = T_1 + o_\e(1)$) before a collision happens. More precisely, we require for $\ell = 1,2$ that
\begin{subequations} \label{pf:zx}
\begin{align} \label{pf:zx:IC}
  \hat v_\e^\ell (\o T_1, \cdot) 
  &\geq \o v_\e (\o T_1, \cdot)
  && \text{on } \R \\ \label{pf:zx:supersol}
  \hat v_\e^\ell 
  &\geq v_\e
  && \text{on } [\o T_1, \hat T_1] \times \R 
\end{align}
\end{subequations}
for all $\e$ small enough. We take the parameter $L$ in Figure \ref{fig:ve:oT} large enough such that the expected collision in $\hat v_\e^1$ will happen in space to the right of the expected collision in $v_\e$, whereas the collision in $\hat v_\e^2$ will happen instead to the left. Then, the profile  
\[
  \hat v_\e^3 = \min \{ \hat v_\e^1, \hat v_\e^2 \}
\]
at $\hat T_1$ turns out to be close to $v_\e$ after the annihilation has happened; see Figure \ref{fig:ve:hatT}. Note that, due to taking the minimum, $\hat v_\e^3$ need not be of the form \eqref{supersol:prototype}. However, at $t = \hat T_1$ it can be bounded from above by a similar expression in which the first two particles are removed:
\begin{equation} \label{pf:wy} 
\hat v_\e^3(\hat{T}_1,x)
\leq \sum_{i = 3}^N u\left(\displaystyle \frac{x - y_i}\e; b_i\right) + o_\e(1),
\end{equation}
where $y_i = x_i(T_1) + o_\e(1)$.

The upper bound in \eqref{pf:wy} is sufficient for iterating the construction above by starting with another $\tv_\e$ defined on $[\hat T_1, \tT_1]$. This completes one induction step. The induction stops after $K+1$ with $\o T_{K+1} = T$. By \eqref{pf:zy:limp}, this proves the desired inequality \eqref{pf:zz} on $(0, T] \setminus \{ T_1, \ldots, T_K \}$.

\begin{figure}[ht]
\centering
\begin{tikzpicture}[scale=1.8, >= latex]
\draw[->] (0,-.25) -- (0,1.25);
\draw[->] (-.25,0) -- (5.25,0) node[right] {$x$};
\draw (0,1) node[left]{$1$} --++ (5.25,0)  node[anchor = south east] {$t = \hat T_1$};

\showfig{
\draw[domain=-.25:2.5, smooth, blue] plot (\x,{ .5*( atan( 10*(\x - 2.75) )/90 + 1 ) - .5*( atan( 10*(\x - 3.75) )/90 + 1 ) + .08 });
\draw[domain=2.5:4, smooth, blue] plot (\x,{ .5*( atan( 10*(\x - 2.75) )/90 + 1 ) - .5*( atan( 10*(\x - 3.75) )/90 + 1 ) + .08 });
\draw[domain=4:5.25, smooth, blue] plot (\x,{ .5*( atan( 10*(\x - 2.75) )/90 + 1 ) - .5*( atan( 10*(\x - 3.75) )/90 + 1 ) + .08 });

\draw[domain=-.25:1, smooth, blue] plot (\x,{ .5*( atan( 10*(\x - 1.25) )/90 + 1 ) - .5*( atan( 10*(\x - 2.25) )/90 + 1 ) + .08 });
\draw[domain=1:2.5, smooth, blue] plot (\x,{ .5*( atan( 10*(\x - 1.25) )/90 + 1 ) - .5*( atan( 10*(\x - 2.25) )/90 + 1 ) + .08 });
\draw[domain=2.5:5.25, smooth, blue] plot (\x,{ .5*( atan( 10*(\x - 1.25) )/90 + 1 ) - .5*( atan( 10*(\x - 2.25) )/90 + 1 ) + .08 });

\draw[domain=-.25:2.5, smooth, very thick, red, dotted] plot (\x,{ .5*( atan( 10*(\x - 2.75) )/90 + 1 ) - .5*( atan( 10*(\x - 3.75) )/90 + 1 ) + .08 });
\draw[domain=2.5:5.25, smooth, very thick, red, dotted] plot (\x,{ .5*( atan( 10*(\x - 1.25) )/90 + 1 ) - .5*( atan( 10*(\x - 2.25) )/90 + 1 ) + .08 });
}

\draw[<-] (2.5,0) --++ (0,-.3); 
\draw[<-] (2.5,.2) --++ (0,.3) node[above] {$o_\e(1)$};

\draw[blue] (4, .5) node{$\hat v_\e^1$};
\draw[blue] (1, .5) node{$\hat v_\e^2$};
\draw[red] (3.25, .3) node{$\hat v_\e^3$};

%\begin{scope}[shift={(2,3)},scale=1] 
%    \draw[thick, red] (0,0) -- (0,.5);
%\end{scope}
\end{tikzpicture} \\
\caption{A sketch of $\hat v_\e^\ell$ at $t = \hat T_1$. $\hat v_\e^2$ is constructed as in Figure \ref{fig:ve:oT} with the asymmetry to the left. $\hat T_1$ is chosen to be large enough such that $\hat v_\e^3$ remains $o_\e(1)$-close to $0$-level set.}
\label{fig:ve:hatT}
\end{figure}
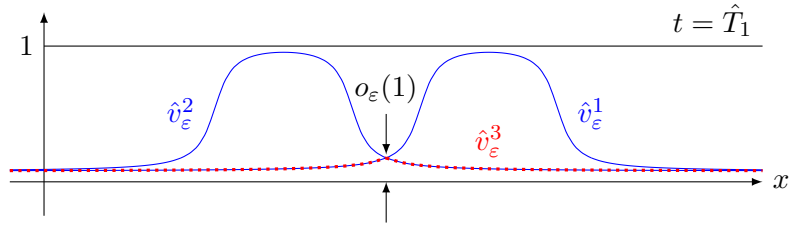

It is still left to show that \eqref{pf:zz} holds at $T_k$ for $k = 0, \ldots, K$. By the induction argument, it is sufficient to focus on $T_1$. For an approximating sequence $t_\e \to T_1$, $t_\e$ can belong to any of the four intervals
\[
  [\tT_0, \o T_1), \quad
  [\o T_1, \hat T_1), \quad
  [\hat T_1, \tT_1), \quad
  [\tT_1, \o T_2). 
\] 
Depending on which interval it belongs to, we will prove \eqref{pf:zz} at $T_1$ for the corresponding upper bound on $v_\e$ which we have constructed on that interval.
\smallskip

Finally, we comment on the treatment of different types of particle collisions. First, if $b_1 = -1$ instead of $b_1 = +1$, then the construction can be simplified. At $\o T_1$, instead of Figure \ref{fig:ve:oT}, $\o v_\e$ is as in Figure \ref{fig:ve:oT:inversed}. Then, the dip in the profile of $\o v_\e(\o T_1, \cdot)$ can simply be removed; we can immediately proceed with the construction of $\tv_\e$ on $[\hat T_1, \tT_1]$ with $\hat T_1 = \o T_1$.
Second, in the case where more than two particles collide, we apply a similar construction up to $t = \o T_1$. Then, before constructing $\hat v_\e^\ell$ which satisfy \eqref{pf:zx:IC}, we first bound $\o v_\e(\o T_1, x)$ from above by a profile $\hat v_\e^0(x)$ which fits to the case of no collision or a simple collision as considered above. Then, similar arguments as used in this section apply to continue the construction of supersolutions beyond $t = \o T_1$.
Third, we show that our construction is robust to the case in which different annihilation events happen at the same time. This completes the outline of the proof of \eqref{pf:zz}. 
\medskip 

\begin{figure}[ht]
\centering
\begin{tikzpicture}[scale=1.8, >= latex]
\draw[->] (0,-1.25) -- (0,.25);
\draw[->] (-.25,0) -- (5.25,0) node[right] {$x$};
\draw (0,-1) node[left]{$-1$} --++ (5.25,0);
\draw (5.25,.08) node[anchor = south east] {$t = \o T_1$};

\showfig{
\draw[domain=-.25:2.5, smooth, thick, red] plot (\x,{ -.55*( atan( 10*(\x - 2) )/90 + 1 ) + .55*( atan( 10*(\x - 3) )/90 + 1 ) + .04 });
\draw[domain=2.5:5.25, smooth, thick, red] plot (\x,{ -.55*( atan( 10*(\x - 2) )/90 + 1 ) + .55*( atan( 10*(\x - 3) )/90 + 1 ) + .04 });
\draw[thick, blue] (-.25,.08) -- (5.25,.08);
}

%\draw[<->] (1,.3) --++ (1,0) node[midway, above]{$\Theta_\e$};
\draw[<->] (2,-.3) --++ (1,0) node[midway, below]{$\Theta_\e$};
%\draw[<->] (3,.3) --++ (2.5,0) node[midway, above]{$L \Theta_\e$};

%\draw[blue] (1,0) node[below]{$\hat x_1$} --++ (0,.55);
\draw[red] (2,0) --++ (0,-.5);
%\draw[blue] (5.5,0) node[below]{$\hat x_2$} --++ (0,.55);
\draw[red] (3,0) --++ (0,-.5);
\draw[red] (2,.08) node[above] {$\o x_1$};
\draw[red] (3,.08) node[above] {$\o x_2$};
\draw[red] (3.25, -.5) node{$\o v_\e$};
\draw[blue] (4, .08) node[above]{$\tv_\e$};

\draw[<-] (1,0) --++ (0,-.3) node[below] {$o_\e(1)$}; 
\draw[<-] (1,.08) --++ (0,.3);

%\begin{scope}[shift={(2,3)},scale=1] 
%    \draw[thick, red] (0,0) -- (0,.5);
%\end{scope}
\end{tikzpicture} \\
\caption{A sketch of $\o v_\e$ and $\tv_\e$ (defined later in \eqref{pf:zf}) at $t = \o T_1$ in the case $b_1 = -1$.}
\label{fig:ve:oT:inversed}
\end{figure}

The remainder of this section entails the rigorous proof of \eqref{pf:zz}. It is organized as follows. In the preparatory Section \ref{s:pf:0} we build the key lemmas on which the construction relies. In Section \ref{s:pf:1} we construct $\tv_\e$ and $\o v_\e$ and prove \eqref{pf:zz} on $(0,T_1)$, without any assumptions on the type of collision. In Section \ref{s:pf:15} we construct $\hat v_\e^\ell$ for (simple) collisions at which precisely two particles collide at $T_1$. We also prove how to continue the construction up to some $\o T_2 = T_2 + o_\e(1)$, and prove \eqref{pf:zz} at $t = T_1$. Then, in Section \ref{s:pf:2} we extend this construction to the case in which the particle collision at $T_1$ contains more than two particles, and in Section \ref{s:pf:3} we deal with multiple collision events at separated locations. Finally, in Section \ref{s:pf:3} we make the induction argument precise. 

\subsection{Preparation}
\label{s:pf:0}

\paragraph{We may take $c_0 = 1$.}
Here we prove that without loss of generality we may assume that 
\[
  c_0 = 1.
\]
We recall that $c_0$ is the mobility parameter in \eqref{PN}. Given $W$, the value of $c_0 > 0$ follows from \eqref{c0} and \eqref{u}. It is easy to check that the spatially rescaled version
\[
  \hat v (t, \hat x) := v^\e (t, \sqrt{c_0} \hat x)
\]
satisfies \eqref{HJe:formal} with $\e$ replaced by $\hat \e := \sqrt{c_0} \e$ and $W$ replaced by $\hat W (v) := c_0 W(v)$. % p.28.top
Since $\hat W$ satisfies \eqref{Wass}, equation \eqref{u} with $W$ replaced by $\hat W$ has a unique solution $\hat u$. It is easy to check that $\hat u(\hat x) = u (c_0 \hat x)$ % p.28.1/4 
and that
\[
  \hat c_0 
  := \left(\int_\R (\hat u')^2\right)^{-1} 
  = \frac1{c_0} \left(\int_\R (u')^2\right)^{-1}
  = 1.
\]
Hence, translating the statement of Theorem \ref{t} to the setting denoted with hats we obtain an equivalent statement with $\hat c_0 = 1$. 

\paragraph{Explicit expression for $v^*$.} 
With $v$ given by \eqref{v} with respect to the solution $\bx$ to \eqref{PN}, its upper semi-continuous envelope can written as
\begin{equation} \label{v*} 
   v^*(t,x) := \sum_{i \in S_t} (b_i H)^* \big( x - x_i(t) \big) + \chi(t,x)
   \qquad \text{for all } t \geq 0 \text{ and all } x \in \R, 
\end{equation}
where the indicator function $\chi(t,x) \in \{0,1\}$ equals $1$ if and only if at $(t,x)$ an even number of particles collide for which the leftmost particle has positive orientation. The expression \eqref{v*} is easy to validate when $(t,x)$ is not a collision point (for a visual interpretation, consider Figure \ref{fig:trajs}, and note that $v$ is constant on each of the regions separated by the trajectories). If it is a collision point, then it follows from the alternating orientations of the  colliding particles (see Proposition \ref{p:vMPP}) that $v$ in any small enough neighborhood $\mathcal N$ around $(t,x)$ attains precisely two values ($k$ and $k+1$ for some $k \in \Z$). Then, $v^*(t,x) = k+1$. However, the sum in \eqref{v*} only selects the highest value of $v$ in $\mathcal N_+ := \mathcal N \cap (t,\infty) \times \R$. For those collisions for which $v |_{\mathcal N_+} \equiv k$, the function $\chi$ fixes the mismatch between the sum and $v^*(t,x)$.

From Lemma \ref{l:u} we observe that the supersolutions from Section \ref{s:pf:outline} of the form \eqref{supersol:prototype} converge to a sum of Heaviside functions as $\e \to 0$. In preparation for proving \eqref{pf:zz} with $v^*$ as in \eqref{v*}, the following lemma provides a convenient upper bound.

\begin{lem}[Relation between $u$ and $H$]
\label{l:u:to:H} Let $x \in \R$ and $b \in \{-1,+1\}$. For any $x_\e \to x$ as $\e \to 0$,
\begin{equation}\label{u:to:H}
\limsup_{\e \to 0} u \left( \frac{x_\e}\e; b \right) \leq (bH)^*(x). 
\end{equation}
%and for any $x\in\R$ % \cite[Thm. 1.6]{CabreSolaMorales05}
%\begin{equation}\label{u'infinity}0<\frac{K_0}{1+x^2}\leq
%u'(x)\leq\frac{K_1}{1+x^2}.\end{equation}
%\begin{equation}\label{phi''infinity}-\frac{K_1}{1+x^2}\leq
%\phi''(x)\leq\frac{K_1}{1+x^2},\end{equation}
%\begin{equation}\label{phi'''infinity}-\frac{K_1}{1+x^2}\leq
%\phi'''(x)\leq\frac{K_1}{1+x^2}.\end{equation}
\end{lem}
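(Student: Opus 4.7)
My plan is to prove the inequality by a case analysis on the sign of $b$ and the sign of $x$, using only the monotonicity and the limiting behavior of $u$ at $\pm \infty$ provided by Lemma \ref{l:u} (together with $u(0) = 1/2$ and strict monotonicity, which give $0 < u(y) < 1$ for all $y \in \R$). No more sophisticated tool than this should be needed.

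First I would record the four relevant values of $(bH)^*$: since $H(x) = \mathbf{1}_{x \geq 0}$ (or any equivalent convention), one has $H^*(x) = 1$ for $x \geq 0$ and $H^*(x) = 0$ for $x < 0$, while $(-H)^*(x) = 0$ for $x \leq 0$ and $(-H)^*(x) = -1$ for $x > 0$.

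For the case $b = +1$ I would split on the sign of $x$. If $x \geq 0$, then $(bH)^*(x) = 1$ and the bound is immediate from $u < 1$. If $x < 0$, then for all $\e$ sufficiently small $x_\e < x/2 < 0$, hence $x_\e/\e \to -\infty$, and by the boundary condition $\lim_{y \to -\infty} u(y) = 0$ in \eqref{u} we get $u(x_\e/\e) \to 0 = (bH)^*(x)$. For the case $b = -1$, I would use $u(y;-1) = u(-y) - 1$. If $x \leq 0$, then $(bH)^*(x) = 0$ and the inequality reduces to $u(-x_\e/\e) \leq 1$, which holds since $u < 1$. If $x > 0$, then $-x_\e/\e \to -\infty$, so $u(-x_\e/\e) \to 0$ and therefore $u(x_\e/\e; -1) \to -1 = (bH)^*(x)$.

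In all four cases the asserted limsup inequality follows, in fact with equality in the limit when $x \ne 0$. I do not anticipate any obstacle: the content of the statement is simply that the rescaled transition layer $u(\cdot/\e; b)$ collapses to the step function $bH$ in the limit $\e \to 0$, and Lemma \ref{l:u} already encodes the correct boundary values. The only subtle point is being careful with the value of $(bH)^*$ at $x = 0$, which is handled by noting that the pointwise bound $u(\cdot; b) \leq (bH)^*(0)$ is global and does not require any limit.
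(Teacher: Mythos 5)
Your proof is correct and follows essentially the same route as the paper's: reduce to the monotone bound $u<1$ when $(bH)^*(x)$ already takes its maximum value, and otherwise use $x_\e/\e \to -\infty$ together with the boundary condition $\lim_{y\to-\infty}u(y)=0$ from \eqref{u}. The paper simply compresses the four cases by observing that $b=-1$ is symmetric to $b=+1$ and that $u\leq 1$ handles $x\geq 0$ at once; your version spells these reductions out explicitly but uses the same ingredients.
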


\begin{proof}
Since the proof for $b = -1$ is similar to that of $b = 1$, we focus on the case $b=1$. Then, \eqref{u:to:H} reads
\[
  \limsup_{\e \to 0} u \left( \frac{x_\e}\e \right) \leq H^*(x).
\]
Since $u \leq 1$, we may assume that $x < 0$. Then, $\frac{x_\e}\e \to -\infty$ as $\e \to 0$, and thus $u \left( \frac{x_\e}\e \right) \to 0 = H^*(x)$ as $\e \to 0$.
\end{proof}

%and
%\begin{equation}\label{psi'infinity}
%\big| \psi'(x) \big| \leq \frac{K_3}{1+x^2}
% \quad\text{for } x \in \R.
%\end{equation}
%\begin{equation}\label{psi''infinity}-\frac{K_3}{1+x^2}\leq
%\psi''(x)\leq \frac{K_3}{1+x^2}.\end{equation}
%\end{lem}

%The results of Lemmas \ref{phiinfinitylem} and  \ref{l:psi} have been generalized  in \cite{cs, dpv,dfv,psv,pv} to the case 
%when the fractional operator is $-(-\Delta)^s$ for any $s\in(0,1)$.

%\begin{comment}

\paragraph{Super and subsolutions to \eqref{HJe:formal}}
In this subsection we recall from \cite{GonzalezMonneau12,PatriziValdinoci15} how  to construct supersolutions of  \eqref{HJe:formal} by using $u$ and $\psi$. We use this construction to build $\o v_\e$, $\hat v_\e^1$ and $\hat v_\e^2$ introduced in Section \ref{s:pf:outline}. Several of the properties of this construction (see Lemma \ref{l:obx} below) are new.
%The construction of subsolutions is similar; we omit it.

%Let us consider the equation 
%\beq\lable{vepeqsigma0}\ep(\vs_\ep)_t-\cI \vs_\ep+\displaystyle\frac{1}{\ep^{2s}}W'(\vs_\ep)-\sigma_0\geq 0$$
%\begin{comment}

%For later purposes let us considered the equation \eqref{HJe:formal} 
%\begin{equation} \label{HJe:formalsigma} 
%  \e \partial_t v_\e = \cI [v_\e]  - \frac1\e W' ( v_\e )+\sigma_0,
%\end{equation}
%where $\sigma_0\in \R$.  
%Let us 
% introduce a perturbed version of system \eqref{PN}. 

The construction starts from a perturbed version of \eqref{PN}. Given an external, constant force $\o \sigma \in\R$, we define $\o \bx (t) := (\xs_1(t),\ldots,\xs_N(t))$ to be the solution of the following perturbed version of \eqref{PN} with initial points $\o \bx^0 \in \Omega^N$: for $i=1,\ldots,N$ 
\beq\label{oPN}\begin{cases} \dot{\xs}_i = 
\displaystyle\sum_{ \substack{ j = 1 \\ j \neq i} }^N 
\displaystyle\frac{b_i b_j }{ \xs_i-\xs_j}- b_i \o \sigma  &\text{in }(0, \osT_1)\\
 \xs_i(0) = \o x_i^0,
\end{cases}\eeq 
where $\osT_1$ is the first collision time of \eqref{oPN}. If no collision happens in finite time, then we take $\osT_1 = T$. Note that the external force $\o \sigma$, when positive, pushes positive particles to the left and negative particles to the right. In \eqref{oPN} we are not interested in going beyond collisions. Hence, there is no need to specify a collision rule and no need to keep track of an index set of surviving particles. 

 For later use, let $\theta_i, \theta : [0,T_1] \to \R$ and $\o\theta_i, \o\theta : [0,\osT_1] \to \R$ for $i = 1,\ldots,N-1$ be defined by
\begin{align} \label{theta:defs}
  \theta_i &:= x_{i+1} - x_i,
  & \theta &:= \min_{1 \leq i \leq N-1} x_{i+1} - x_i, \\\notag
  \o\theta &:= \o x_{i+1} - \o x_i,
  & \o\theta &:= \min_{1 \leq i \leq N-1} \o x_{i+1} - \o x_i.
\end{align}

Lemma \ref{l:obx} below states the connection between \eqref{oPN} and \eqref{PN}.

\begin{lemma}[Perturbation of \eqref{PN}] \label{l:obx}
Let $\bx$ be the solution of \eqref{PN} with initial condition $\bx^0  \in \Omega^N$.
Let $\o\bx$ be the solution of \eqref{oPN} with initial condition  $\o \bx^0 =  \bx^0+\o\eta\in \Omega^N$ for some $\o\eta\in\R^N$. 
Let $T_1$ and $\osT_1$ be the first collision time respectively of \eqref{PN} and \eqref{oPN}.  
Then 
%\begin{equation} \label{l:obx:ox:C}
%  \o\bx \in C([0, \osT_1]).
%\end{equation}
%Moreover, 
 for any $\tau \in (0, T_1)$ fixed and any $\o t = \o t(\o \sigma, \o\eta)$ such that $\o t\leq \osT_1$ and  $\o t \to T_1$ as $\o \sigma, |\o\eta| \to 0$, there holds
\begin{align}
  \| \o \bx - \bx \|_{C([0, T_1 - \tau])} &\to 0, \label{uniformconvergencexbartox} \\
   \osT_1 &\to T_1, \label{l:obx:eqn} \\
  \o \bx(\overline{t}) &\to \bx(T_1) \label{xbartetoxoft}
\end{align}
as $\o \sigma, |\o\eta| \to 0$. 
\end{lemma} 
\begin{proof}
For convenience we parametrize the sequences $\o \sigma$ and $\o \eta$ by $\e$ such that ``$ \o \sigma, |\o\eta|\to0$" can be summarized as $\e\to0$. 

%First, we recall that by \cite[Theorem 2.4]{VanMeursPeletierPozarXX},  for all  $t>0$, 
%\beq\label{pointwiseconvxk}\o\bx(t)\to\bx(t) ,\quad\text{as }\ep\to0.\eeq
%Moreover by standard ODE theory,  the particles $x_i$ are Lipschitz continuous in $[T_1,T_1+\tau]$ for $\tau$ small enough. 

The limit  \eqref{uniformconvergencexbartox} follows from standard ODE theory. Indeed, on $[0, T_1 - \tau]$ the particles $x_i$ remain separated, and thus the right-hand side in \eqref{PN} remains smooth and bounded in a neighborhood of the trajectories. Since the perturbation in \eqref{oPN} vanishes as $\e \to 0$, \eqref{uniformconvergencexbartox} follows. Moreover, from the particle separation of $x_i$ and \eqref{uniformconvergencexbartox} we also have that the particles $\o x_i$ remain separated, i.e.\ for any $\tau \in (0, T_1)$ there exists $c > 0$ such that 
$$\min_{1 \leq i \leq N-1}\min_{t\in [0,T_1-\tau]} \xs_{i+1}(t)-\xs_i(t) > c,$$ for $\ep $ small enough, 
which implies that $ \osT_1>T_1-\tau$. By the arbitrariness of  $\tau$, we  infer that 
\beqs %\label{l:obx:eqn_inf} 
\liminf_{\e\to0}  \osT_1\geq T_1.
\eeqs
Then, to prove \eqref{l:obx:eqn}, it suffices to show
\beq\label{l:obx:eqn_sup} \limsup_{\e\to0}  \osT_1\leq T_1.\eeq
We remark that, in the special case where $T_1 = T$ (i.e.\ no collisions happen), the argument above holds for $\tau = 0$, and then \eqref{l:obx:eqn} and \eqref{xbartetoxoft} follow from \eqref{uniformconvergencexbartox}.
\smallskip

In the generic case $T_1 < T$, it is left to prove \eqref{l:obx:eqn_sup} and \eqref{xbartetoxoft}. We prove both statements together from the argument that follows. As preparation, for any particle $x_i$, we set $I_i = \{k, k+1, \ldots,\ell\}$ as the set of indices of all particles $x_j$ (including $j=i$) that collide with $x_i$ at time $T_1$. If  $x_i$ does not collide, then we take $I_i = \{i\}$.
 
Let $\delta > 0$ be small and arbitrary. By the continuity of $\bx$ there exists $\tau_0=\tau_0(\delta)>0$ such that for any $0<\tau<\tau_0$
\[
  | x_i(T_1 - \tau) - x_i(T_1) | < \frac\delta4
  \quad\text{for any } 1 \leq i \leq N.
\]
Then, by \eqref{uniformconvergencexbartox}, 
there exists   $\ep_0=\ep_0(\tau,\delta)$ such that 
for any $0<\ep<\ep_0$,
\beq\label{initialQi}|\xs_i(T_1-\tau)-x_i(T_1)|<\frac{\delta}{2}\quad\text{for any } 1 \leq i \leq N.\eeq

We are going to show that, for any $i$, the  trajectory of $ \xs_i$ remains in the  time-space box $(T_1 - \tau, \osT_1 ) \times
 B( x_i(T_1), \delta) $. 
Let  $T_e$ be the first time at which any particle $x_i$ exits $B( x_i(T_1),\delta)$, that is the first time bigger than $T_1-\tau$ such that either $T_e = \osT_1$ or  
$|\xs_i(T_e) - x_i(T_1) | = \delta$  for some $i \in \{1,\ldots,N \}$. Note that
\beq\label{insideQi}|\xs_i(t)-x_i(T_1)| < \delta\quad \text{for any }t\in (T_1-\tau, T_e)\text{ and any } 1 \leq i \leq N.\eeq
Let $2\rho$ be the shortest distance between any two particles $x_i$ which do not collide with each other at time $T_1$, that is, 
\begin{equation*} %\label{pf:wk}   
  \rho := \frac12 \min_{1 \leq i \leq N} \min_{j \notin I_i} |x_i(T_1) - x_j(T_1)| > 0.
\end{equation*}  
By \eqref{insideQi} we may assume that $\delta$ is small enough with respect to $\rho$ such that
\begin{equation} \label{pf:wk}  
    \min_{1 \leq i \leq N} \min_{j \notin I_i} |\xs_i (t)- \xs_j(t) |> \rho \quad \text{for any }t\in (T_1-\tau, T_e).
\end{equation} 
%In particular, for  $j\in I$, 
%\beq\label{boundderivativenocoll}\left|\sum_{k\in I^c}\frac{b_jb_k}{\xs_j-\xs_k}-b_j\o\sigma\right|\le \frac{N-1}{\rho}+1\quad\text{in }(T_1-\tau, T_e).\eeq

Next, take any $i$ for which $I_i = \{k, k+1, \ldots,\ell\}$ with $\ell>k$. Then, by \eqref{pf:wk} and for $\delta$ small enough with respect to $\rho$, the estimates in the proof of \cite[Theorem 2.4(vi)]{VanMeursPeletierPozarXX} on the ODE in \eqref{oPN} reveal that 
\beq\label{Ddot2} \dot \xs_k> 0\quad\text{and}\quad \dot \xs_\ell< 0\quad\text{ in } (T_1-\tau,  T_e) \eeq
and that $\o D:= \xs_\ell-\xs_k$ satisfies  
\beq\label{theta1} \dot{\o D}\leq -\frac{1}{\o D}+\sum_{j\in I_i^c}\frac{b_\ell b_j}{\xs_\ell-\xs_j}-\sum_{j\in I_i^c}\frac{b_k b_j}{\xs_k-\xs_j}+2|\o\sigma|\quad\text{ in } (T_1-\tau, T_e ).\eeq
Then, from \eqref{theta1} and \eqref{pf:wk} we obtain 
\beq\label{Ddot1} \dot{\o D}\leq -\frac{1}{\o D}+ \underbrace{ 2\frac{N}{\rho}+2 }_{=: C_{\rho,N}} \quad\text{ in } (T_1-\tau, T_e).\eeq
Moreover, by \eqref{initialQi}
\beq\label{Dinitial}\o D(T_1-\tau)\le\delta.\eeq
By \eqref{Ddot1} and \eqref{Dinitial} we infer that $\o D$ is decreasing in $(T_1-\tau, T_e)$ for $\delta$ small enough with respect to $C_{\rho,N}$. In fact, taking $\delta$ smaller if necessary, we have
\beqs \frac12 \frac d{dt} (\o D^2) 
= \dot{\o D}\,\o D\leq -1+ C_{\rho,N} \o D\leq   -1+ C_{\rho,N} \o D(T_1-\tau)\leq -1+ C_{\rho,N} \delta \leq -\frac12 \quad\text{ in } (T_1-\tau,  T_e).\eeqs 
Integrating from $T_1-\tau$ to $T_e$ we obtain
\beqs 0\le \o D^2(T_e)\leq \o D^2(T_1-\tau)-(T_e-T_1+\tau)\leq \delta^2-(T_e-T_1+\tau).\eeqs
This implies that 
\beq\label{exittimeest} T_e\leq T_1-\tau+\delta^2.\eeq

Next we show that $T_e = \osT_1$, i.e.\ no particle $\o x_i$ exits their respective time-space box before $t = \osT_1$. Suppose instead that $T_e < \osT_1$ and let $\xs_i$ be a particle which exits its box at $t = T_e$, that is  $|\xs_i(T_e) - x_i(T_1) | = \delta$. From \eqref{initialQi} and \eqref{Ddot2} it follows that $\# I_i$ cannot be greater than or equal to $2$, and thus $I_i = \{i\}$. Then, by \eqref{oPN} and 
\eqref{pf:wk}   we have that 
\beqs %\label{noncollidingD} 
|\dot \xs_i|\leq \left|\sum_{j\neq i}\frac{b_ib_j}{\xs_i-\xs_j}-b_i\o\sigma\right|\leq  \frac{N}{\rho}+1 = \frac12 C_{\rho,N}
\quad\text{for all }t\in (T_1-\tau, T_e).
\eeqs
Combining this with \eqref{initialQi} and 
\eqref{exittimeest}, we have
\beqs \delta 
=|x_i(T_1) - \xs_i(T_e)|
\leq |x_i(T_1) - \xs_i(T_1-\tau)| + \frac12 C_{\rho,N} (T_e-T_1+\tau) 
\leq  \frac{\delta}{2} + \frac12 C_{\rho,N} \delta^2,\eeqs 
which is a contradiction for $\delta$ small enough with respect to $C_{\rho,N}$. We conclude that $T_e = \osT_1$. 

Finally, the desired statements \eqref{l:obx:eqn_sup} and \eqref{xbartetoxoft} follow from $T_e = \osT_1$ and the fact that $\delta$ can be chosen arbitrarily small. Indeed, \eqref{l:obx:eqn_sup} follows from \eqref{exittimeest}, and \eqref{xbartetoxoft} follows from \eqref{l:obx:eqn_sup} and \eqref{insideQi}.
\end{proof}

Next we use \eqref{oPN} to construct supersolutions of \eqref{HJe:formal}. We set
\begin{equation} \label{phib}
  \psi(x; b) := b \psi (bx)
  \quad \text{for } x \in \R \text{ and } b = \pm 1.
\end{equation}
Note that the dependence on $b$ is different from that in $u$:
\begin{equation} \label{ub}
  u(x; b) = u (bx) + \frac{b-1}2.
\end{equation}
Using these relations, we obtain for a constant $C \in \R$ that
\begin{equation*}
  (u + C \psi)(x; b) 
  = \begin{cases}
    u(x) + C \psi(x)
    &\text{if } b = +1 \\
    u(-x) - 1 - C \psi(-x)
    &\text{if } b = -1;
  \end{cases}
\end{equation*}
we will often use this expression.

Setting
\beq\label{oci}\cs_i(t):= \dot{\xs}_i(t),\quad i=1,\ldots,N,\eeq 
we define
\beq\label{ove}\begin{split}
\vs_\ep(t,x)
&:=\sum_{i=1}^N ( u - \ep\cs_i(t)\psi ) \left(\displaystyle \frac{x-\xs_i(t)}{\ep} ; b_i \right)+ \ep \frac{\o \sigma - \delta_\e}\alpha,\end{split}
\eeq
for some $\delta_\e > 0$, where we recall that $\alpha = W''(0) > 0$.
Under the appropriate choice of the parameters,
Lemma \ref{l:vebar} below states that $\vs_\ep$ is a supersolution of \eqref{HJe:formal}. 
%the following perturbed version of \eqref{HJesigma_0}:
%\beq\label{HJesigma_0}\ep(\vs_\ep)_t-\cI \vs_\ep+\displaystyle\frac{1}{\ep}W'(\vs_\ep)- \o \sigma =0.\eeq

%{\color{red} Maybe for $s=1/2$ we can  explicitly write $\theta_\ep=\ep^\alpha$ with $\alpha\in (0,1/2)$} 

\begin{lemma}[Gonzalez--Monneau] \label{l:vebar} There exist $\ep_0>0$ and $\theta_\ep,\,\delta_\ep>0$ with 
\beq\label{thetaepthetaprop} \theta_\ep,\,\delta_\ep,\,\ep\theta_\ep^{-2}=o(1)\quad\text{as }\ep\to0\eeq
such that for any $\o \bx^0 \in \Omega^N$, $\ep \in (0, \ep_0)$, $\o \sigma \geq \delta_\e$ the function $\vs_\ep$ defined in \eqref{ove} from the solution $\o \bx$ of \eqref{oPN} is a supersolution of \eqref{HJe:formal} on
\[
  \{ t\in(0, \osT_1) \mid \o \theta_i \geq \e \} \times \R.
\]
%\beqs\ep(\vs_\ep)_t(t,x) -\cI \vs_\ep (t,x) + \displaystyle\frac{1}{\ep}W'(\vs_\ep (t,x) )- \o \sigma \geq 0\eeqs
\end{lemma} 

\begin{proof}
See the proofs of Proposition 5.3 and Lemma 5.4 in \cite{PatriziValdinoci16}.
\end{proof}

\begin{rem} \label{r:thetae:dele} 
The $\e$-dependent constants $\theta_\ep,\,\delta_\ep$ can be constructed explicitly. A possible choice is $\theta_\e = \e^\gamma$ for any $\gamma < \frac12$, and $\delta_\e = C \ep^\alpha$ for some $C, \alpha > 0$, where $\alpha$ depends on $\gamma$ and $C$ depends on $N$ and  both $\phi$ and $\psi$. Hence, $\theta_\ep,\,\delta_\ep$ can be chosen such that they only depend on $W$ and $N$ and not on any of the parameters $ \o \sigma, \o \bx^0$ in \eqref{oPN}. We treat $\theta_\ep,\,\delta_\ep$ as given, $\e$-dependent constants in the remainder of the proof of Theorem \ref{t}.
\end{rem}

%A similar Lemma holds for subsolutions; we omit the details.

%\end{comment}

\paragraph{Patching together different supersolutions.} The functions $\tv_\e$, $\o v_\e$ and $\hat v_\e^\ell$ from Section \ref{s:pf:outline} are of the form \eqref{ove}. We need to patch them together at $\o T_k$, $\hat T_k$ and $\tT_k$ such that \eqref{pf:zy:IC} and \eqref{pf:zx:IC} hold (see Figures \ref{fig:ve:tT} and \ref{fig:ve:oT}). With this aim we establish the following lemma.

\begin{lem} [Bounding supersolutions by other supersolutions] \label{l:u:epsi:new}
For any $M > 0$ there exist $K = K(M) \geq 0$ such that for all $\vartheta_\e > 0$ there exists $\e_0 > 0$ such that for all $c, c' \in \R$ and all $\e, z > 0$, if
\[
  \e < \e_0, \quad
  \vartheta_\e, \frac\e{\vartheta_\e} = o_\e(1), \quad
  |c|, |c'| \leq \frac{M}{\vartheta_\e}, \quad
  z \geq \frac{\vartheta_\e}\e,
\]
then
\begin{equation} \label{pf:xy}
  \left(u - \e |c \psi| \right)(y+z)
  - \left(u + \e |c' \psi| \right)(y)
  \geq - K(M) (|c| + |c'|) \frac{\e^2}{\vartheta_\ep}
  \quad \text{for all } y \in \R. 
\end{equation}
\end{lem}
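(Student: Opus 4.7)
My plan is to prove the inequality by case analysis on the positions of $y$ and $y+z$ relative to the transition zone of $u$, using the asymptotics of Lemmas \ref{l:u} and \ref{l:psi}. Writing $\tau := \vartheta_\e/\e$, the hypothesis $\e/\vartheta_\e = o_\e(1)$ gives $\tau \to \infty$, and the hypothesis $z \geq \tau$ forces at most one of $y$, $y+z$ to lie in $[-\tau/2,\tau/2]$. The key quantitative ingredients are: $|\psi(\xi)| \leq (|K_2|+K_3)/|\xi|$ for $|\xi| \geq 1$ by Lemma \ref{l:psi}; $u$ is monotone and satisfies $u(x) = H(x) - \frac{1}{\alpha\pi x} + O(1/x^2)$ for $|x|\geq 1$ by Lemma \ref{l:u}; and $|c|, |c'| \leq M/\vartheta_\e$. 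Consequently, whenever $|\xi| \geq \tau/2$, one has $\e|c\psi(\xi)| \leq 2(|K_2|+K_3)|c|\e^2/\vartheta_\e$, which is precisely the form of the allowed slack.

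In \textbf{Case 1} ($|y|,|y+z| \geq \tau/2$) both correction terms are absorbed into the slack, while monotonicity gives $u(y+z)-u(y)\geq 0$, so the inequality holds trivially. In \textbf{Case 2} ($|y| < \tau/2$, which forces $y+z \geq \tau/2$), only the correction at $y$ requires care. Fix a large constant $R_0=R_0(\alpha,K_1,K_2,K_3)$. If $|y|\leq R_0$, monotonicity gives $u(y) \leq u(R_0) < 1$, while Lemma \ref{l:u} at $y+z \geq \tau/2$ gives $u(y+z) \geq 1 - O(1/\tau)$, so $u(y+z)-u(y) \geq (1-u(R_0))/2$ for $\e$ small, which dominates $\e|c'\psi(y)| \leq M\|\psi\|_\infty/\tau = o_\e(1)$. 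If $R_0 < |y| < \tau/2$, I would use the expansion of $u$ together with $y+z \leq 2z$ (since $|y|\leq \tau/2\leq z$) to obtain $u(y+z)-u(y) \geq c_0/|y|$ for a universal $c_0>0$, provided $R_0$ is chosen large enough that the $O(1/y^2)$ remainder does not swallow the leading $\frac{1}{\alpha\pi|y|}$ term. Meanwhile $\e|c'\psi(y)| \leq (|K_2|+K_3)M/(|y|\tau) = o_\e(1)/|y|$, which is $\leq c_0/(2|y|)$ uniformly in $y$ for $\e$ small. \textbf{Case 3} ($|y+z|<\tau/2$, which forces $y\leq -\tau/2$) is handled symmetrically with the roles of $(y,c')$ and $(y+z,c)$ swapped. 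Taking $K(M)$ to be the maximum of the constants arising in each case completes the argument.

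The main obstacle is ensuring, in the intermediate sub-case of Case 2 (and its symmetric counterpart in Case 3), that the lower bound $u(y+z)-u(y) \geq c_0/|y|$ is uniform in $y$ with $c_0$ independent of $\e$. This requires selecting $R_0$ carefully so that the constant absorbing the $O(1/y^2)$ remainder in the asymptotic expansion of $u$ is strictly smaller than the leading coefficient $\frac{1}{\alpha\pi}$, and using the upper bound $y+z\leq 2z$ to prevent the denominator $y(y+z)$ from being too large. Once this is secured, the threshold $\e_0$ depends on $M$ only through the requirement $CM/\tau \leq c_0/2$ with universal $C, c_0$, and the remainder of the proof reduces to collecting constants.
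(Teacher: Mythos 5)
Your proof is correct, and the overall three--case strategy (both $|y|$, $|y+z|$ away from the origin; $y$ in the transition zone; $y+z$ in the transition zone) is the same as in the paper, with the asymptotics of Lemmas \ref{l:u} and \ref{l:psi} supplying the quantitative ingredients. However, the near--origin cases are implemented differently, and it is worth spelling out the comparison. The paper takes the transition ball of radius $\kappa\vartheta_\e/\e$ with
\[
\kappa = \min\left\{\tfrac1{10},\tfrac1{8\alpha\pi M\|\psi\|_\infty}\right\},
\]
i.e.\ $\kappa$ depends on $M$ and is chosen small, and then handles the \emph{entire} transition ball in one pass: it estimates the $u$--gain at the edge of the ball by $\tfrac1{4\alpha\pi\kappa}\tfrac\e{\vartheta_\e}$ and bounds both $\psi$--terms crudely by $\|\psi\|_\infty$, obtaining a loss $\le 2M\|\psi\|_\infty\tfrac\e{\vartheta_\e}$; the specific choice of $\kappa$ guarantees that the gain dominates, so the LHS in \eqref{pf:xy} is in fact non--negative on that set and no slack is consumed. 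You instead fix the ball radius at $\tau/2$ (with $\tau=\vartheta_\e/\e$) and split the transition ball further into a fixed core $|y|\le R_0$, where the $u$--gain is of order one and dwarfs the $o_\e(1)$ worst-case $\psi$--loss, and an annulus $R_0<|y|<\tau/2$, where you use the decay $|\psi(y)|\lesssim 1/|y|$ and a gain $\gtrsim 1/|y|$, with the balance $CM/\tau\le c_0/2$ controlling $\e_0$. Both are valid; the paper's choice of $\kappa$ avoids the two-scale sub-division and is slightly slicker, while your version arguably produces a constant $K$ that is independent of $M$ (only $\e_0$ depends on $M$), a mild quantitative improvement that the paper does not claim. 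The one technical point you flag yourself---that $u(y+z)-u(y)\ge c_0/|y|$ uniformly on the annulus for both signs of $y$---does go through once $R_0$ is taken large compared to $\alpha\pi K_1$, using $y+z\ge 2y$ when $y>0$ and the $\Omega(1)$ gain when $y<0$; you should also record explicitly (as you implicitly do) that the $\psi$--term at the far point $y+z$ contributes $\le 2(|K_2|+K_3)\,|c|\,\e^2/\vartheta_\e$ and is what actually uses the allowed slack in Case~2.
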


\begin{proof}
We set $\vartheta := \vartheta_\e$. We recall $\alpha = W''(0) > 0$ and take
\begin{equation*}
  \kappa := \min \left\{ \frac1{10}, \frac1{8 \alpha \pi M \|\psi\|_\infty } \right\} > 0
\end{equation*}
as an $\e$-independent constant. We split 3 cases depending on $y$.
\smallskip

\emph{Case 1: }$y \notin B(-z, \kappa \frac\vartheta\e) \cup B(0, \kappa \frac\vartheta\e)$.
In this case, we don't need the precise expression for $\kappa$. Since $|y|, |y+z| \geq \kappa \frac\vartheta\e$, we obtain from Lemma \ref{l:psi} that
\begin{equation} \label{pf:xu}  
  \max \left\{ |\psi|(y+z), |\psi|(y) \right\}
  \leq \frac{C}\kappa \frac\e\vartheta
\end{equation}
for some constant $C > 0$ and $\e$ small enough. Then, simply using that $u$ is increasing, we obtain \eqref{pf:xy} by
\[
\left(u - \e |c \psi| \right)(y+z)
  - \left(u + \e |c' \psi| \right)(y)
  > -  \frac{C}\kappa (|c| + |c'|) \frac{\e^2}\vartheta.
\]
\smallskip

\emph{Case 2: } $y \in B(-z, \kappa \frac\vartheta\e)$. 
Using that $u$ is increasing and Lemma \ref{l:u}, we obtain for all $\e$ small enough that both
\[
  u(y+z) 
  > u \left( - \kappa \frac\vartheta\e \right)
  \geq \frac1{\alpha \pi \kappa} \frac\e\vartheta - K_1 \left(\frac{\e}{\kappa \vartheta}\right)^2
  \geq \frac1{2 \alpha \pi \kappa} \frac\e\vartheta
\]
and 
\[
  u(y) 
  \leq u \left( \kappa \frac\vartheta\e - z \right)
  \leq u \left( -(1-\kappa) \frac\vartheta\e \right)
  \leq \frac2{\alpha \pi (1-\kappa)} \frac\e\vartheta.
\]
Then, using that $\kappa \in (0, \frac1{10})$,
\[
  u(y+z) - u(y)
  \geq \frac1{2 \alpha \pi} \frac{1 - 5\kappa}{\kappa (1-\kappa)} \frac\e\vartheta
  \geq \frac1{4 \alpha \pi \kappa} \frac\e\vartheta. 
\]
This yields
\begin{align*}
\left(u - \e |c \psi| \right)(y+z)
  - \left(u + \e |c' \psi| \right)(y)
  &\geq \frac1{4 \alpha \pi \kappa} \frac\e\vartheta
  - \e (|c| + |c'|) \| \psi \|_\infty \\
  &\geq \Big( \frac1{4 \alpha \pi \kappa} 
  - 2 M \| \psi \|_\infty \Big) \frac\e\vartheta,
\end{align*}
which is positive since $\frac1\kappa \geq 8 \alpha \pi M \| \psi \|_\infty$. Hence, if $y \in B(-z, \kappa \frac\vartheta\e)$, then \eqref{pf:xy} holds for any $K_4 \geq 0$.
\smallskip

\emph{Case 3: } $y \in B(0, \kappa \frac\vartheta\e)$. By the symmetry in the estimate on $u$ in Lemma \ref{l:u}, this case can be treated similarly as in Case 2.
\end{proof}

The following lemma will be used to remove certain dipoles from supersolutions. An example of such a dipole is illustrated in Figure \ref{fig:ve:oT:inversed}.

\begin{lem} [Removing certain dipoles] \label{l:u:epsi:dipole}
Under the same conditions as Lemma \ref{l:u:epsi:new}
\begin{equation*}
  (u - \e c \psi)(y+z; -1)
  + (u - \e c' \psi)(y; +1)
  \leq K(M) \frac{\e^2}{\vartheta_\e^2}
  \quad \text{for all } y \in \R.
\end{equation*}
\end{lem}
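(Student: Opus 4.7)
My plan is to follow the same three-case splitting used in the proof of Lemma \ref{l:u:epsi:new}, but the main quantity to control is now a ``dipole'' profile instead of an increment of $u$. I first unfold the notation: using \eqref{phib} and \eqref{ub},
\[
(u - \e c \psi)(y+z;-1) + (u - \e c'\psi)(y;+1)
= D(y) + \e c\, \psi(-y-z) - \e c' \psi(y),
\]
where $D(y) := u(y) + u(-y-z) - 1$. So the task reduces to bounding $D$ from above together with the two $\e$-weighted $\psi$ terms. I pick an $\e$-independent threshold $\kappa = \kappa(M) \in (0, \tfrac{1}{10})$ to be fixed later, and split according to whether $y$ is far from both ``transition centres'' $0$ and $-z$ (Case~1), or inside $B(-z,\kappa\vartheta_\e/\e)$ (Case~2), or inside $B(0,\kappa\vartheta_\e/\e)$ (Case~3).

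In Case 1, where $|y|, |y+z| \geq \kappa\vartheta_\e/\e$, applying Lemma \ref{l:u} to both $u(y)$ and $u(-y-z)$ gives
\[
D(y) \leq \big(H(y)+H(-y-z)-1\big) - \frac{z}{\alpha\pi\, y(y+z)} + O(\e^2/\vartheta_\e^2).
\]
If $y \geq 0$ or $y \leq -z$, the bracket is $0$ and $y(y+z) \geq 0$, so both remaining terms are non-positive up to the $O(\e^2/\vartheta_\e^2)$ remainder. If $-z < y < 0$, the bracket equals $-1$; noting that $|y(y+z)| = |y|(z-|y|) \geq (\kappa\vartheta_\e/\e)\cdot(z/2)$ (using $z \geq \vartheta_\e/\e$), the fractional term is bounded by $2\e/(\alpha\pi\kappa\vartheta_\e)$ uniformly in $z$, hence $D(y) \leq -1 + o_\e(1) \leq 0$. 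The $\psi$-contributions are $O(\e^2/\vartheta_\e^2)$ since Lemma \ref{l:psi} gives $|\psi(-y-z)|, |\psi(y)| \leq C\e/\vartheta_\e$ while $|c|, |c'| \leq M/\vartheta_\e$.

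In Case 2, where $y \in B(-z, \kappa\vartheta_\e/\e)$, one has $y \leq -(1-\kappa)\vartheta_\e/\e$ and $-y-z \in B(0, \kappa\vartheta_\e/\e)$, so $-y-z \leq \kappa\vartheta_\e/\e$. Monotonicity of $u$ and Lemma \ref{l:u} applied to each term yield, for $\e$ small,
\[
u(y) \leq \frac{\e}{\alpha\pi(1-\kappa)\vartheta_\e} + O(\e^2/\vartheta_\e^2), \qquad
u(-y-z) \leq 1 - \frac{\e}{\alpha\pi\kappa\vartheta_\e} + O(\e^2/\vartheta_\e^2),
\]
hence
\[
D(y) \leq -\frac{1-2\kappa}{\alpha\pi\,\kappa(1-\kappa)}\cdot\frac{\e}{\vartheta_\e} + O(\e^2/\vartheta_\e^2).
\]
Here the worst $\psi$-contribution is $|\e c\,\psi(-y-z)| \leq M\|\psi\|_\infty\,\e/\vartheta_\e$ (since $\psi(-y-z)$ need not be small), while $|\e c'\psi(y)|$ stays $O(\e^2/\vartheta_\e^2)$ by Lemma \ref{l:psi}. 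Fixing $\kappa = \kappa(M)$ so small that $(1-2\kappa)/(\alpha\pi\kappa(1-\kappa)) > M\|\psi\|_\infty$ makes the leading negative term dominate, leaving only $O(\e^2/\vartheta_\e^2)$. Case 3 is treated identically after exchanging the roles of $y$ and $-y-z$.

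The main obstacle is Case 2 (and the symmetric Case 3): one must absorb the full $O(\e/\vartheta_\e)$ perturbation $\e c\,\psi(-y-z)$ using the intrinsic negativity $D(y) \sim -C(\kappa)\e/\vartheta_\e$ of the dipole. Because $C(\kappa) \to \infty$ as $\kappa \to 0^+$, this is always achievable by choosing $\kappa$ small depending only on $M$ and $\|\psi\|_\infty$. The final constant $K(M)$ is then extracted by collecting the $O(\e^2/\vartheta_\e^2)$ remainders from each of the three cases.
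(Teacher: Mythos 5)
Your proof is correct, but it takes a genuinely different route from the paper's. You reuse the \emph{ball-based} case splitting from the proof of Lemma~\ref{l:u:epsi:new} (Case~1: $|y|,|y+z|\ge \kappa\vartheta_\e/\e$; Case~2: $y\in B(-z,\kappa\vartheta_\e/\e)$; Case~3: $y\in B(0,\kappa\vartheta_\e/\e)$), whereas the paper uses a \emph{half-line} splitting built around the reflection symmetry $y\mapsto -y-z$: Case~1: $y\le -z-\kappa\vartheta/\e$; Case~2: $y\in[-z-\kappa\vartheta/\e,-z/2]$; Case~3: $y>-z/2$. The practical difference shows up in the ``intermediate'' region $-z+\kappa\vartheta_\e/\e<y<-\kappa\vartheta_\e/\e$: your Case~1 handles it directly via the observation that $D(y)=u(y)+u(-y-z)-1\le -1 + o_\e(1)$ there (so the dipole is strongly negative and trivially absorbs the $O(\e^2/\vartheta_\e^2)$ $\psi$-contributions), while the paper folds this region into its Cases~2--3 and controls it by monotonicity, bounding $u(y)\le u(-z/2)\lesssim \e/\vartheta$ for all $y$ up to $-z/2$. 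Both approaches hinge on the same pair of ingredients --- the pointwise asymptotics of $u$ and $\psi$ from Lemmas~\ref{l:u} and~\ref{l:psi}, and a choice of $\kappa=\kappa(M)$ small enough that the negative $-\frac{1-2\kappa}{\alpha\pi\kappa(1-\kappa)}\frac{\e}{\vartheta_\e}$ term from the ball regions beats the non-decaying $M\|\psi\|_\infty\frac{\e}{\vartheta_\e}$ contribution of $\psi$ near a centre --- and arrive at the same $K(M)\e^2/\vartheta_\e^2$ bound. Your version is arguably more transparent about \emph{why} the lemma holds (the dipole is already $\le -1+o_\e(1)$ between the two layers), while the paper's version is a bit more economical because the symmetry about $-z/2$ halves the cases to verify in detail.
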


\begin{proof}
We proceed similarly as in the proof of Lemma \ref{l:u:epsi:new}. We set $\vartheta := \vartheta_\e$. By the bound on $c, c'$, we rewrite and estimate the left-hand side as
\begin{align*}
  (u - \e c \psi)(y+z; -1)
  + (u - \e c' \psi)(y; +1)
  &= (u + \e c \psi)(-y-z) - 1
  + (u - \e c' \psi)(y) \\
  &\leq \left(u + \frac\e\vartheta M |\psi| \right)(-y-z) - 1
  + \left(u + \frac\e\vartheta M |\psi| \right)(y).
\end{align*}
Hence, it is sufficient to show that
\begin{equation} \label{pf:xx}
  \left(u + \frac\e\vartheta M |\psi| \right)(-y-z) - 1
  + \left(u + \frac\e\vartheta M |\psi| \right)(y)
  \leq K_4 (|c| + |c'|) \frac{\e^2}{\vartheta}
  \quad \text{for all } y \in \R.
\end{equation}
To prove this, we take
\begin{equation*}
  \kappa :=  \frac1{4 \alpha \pi M \|\psi\|_\infty + 6 } > 0
\end{equation*}
and split 3 cases depending on $y$.
\smallskip

\emph{Case 1: } $y \leq -z - \kappa \frac\vartheta\e$. Since $|y|, |y+z| \geq \kappa \frac\vartheta\e$, we may use the bound on $\psi$ in \eqref{pf:xu}
for $\e$ small enough. Then, using Lemma \ref{l:u},
\begin{align*}
  u(-y-z) 
  &\leq 1 + \frac1{ \alpha \pi (y+z) } + \frac{K_1}{(y+z)^2}
  \leq 1 + \frac1{ \alpha \pi (y+z) } + \frac{K_1}{\kappa^2} \frac{\e^2}{\vartheta^2}  \\
  u(y) 
  &\leq - \frac1{ \alpha \pi y } + \frac{K_1}{y^2}
  \leq - \frac1{ \alpha \pi y } + K_1 \frac{\e^2}{\vartheta^2}.
\end{align*} 
Hence
\[
\left(u + \frac\e\vartheta M |\psi| \right)(-y-z) - 1
  + \left(u + \frac\e\vartheta M |\psi| \right)(y)
  \leq \frac{-z}{ \alpha \pi y (y+z) } + C_M \frac{\e^2}{\vartheta^2}
  \leq 0 + C_M \frac{\e^2}{\vartheta^2}
\]
for some constant $C_M$ which may depend on $M$ but not on $y,z,\e$.
\smallskip 

\emph{Case 2: } $y \in [-z - \kappa \frac\vartheta\e, - \frac z2]$. 
Using that $u$ is increasing and Lemma \ref{l:u}, we obtain
\begin{align*}
  u(-y-z) 
  &\leq u \left( \kappa \frac\vartheta\e \right)
  \leq 1 - \frac1{2 \alpha \pi \kappa} \frac\e\vartheta, \\
  u(y)
  &\leq u \left( - \frac z2 \right)
  \leq \frac2{\alpha \pi z} + \frac{4 K_1}{z^2}
  \leq \frac3{\alpha \pi} \frac\e\vartheta
\end{align*}
for $\e$ small enough. Substituting this into the left-hand side of \eqref{pf:xx} and recalling the expression of $\kappa$,
\[
  \left(u + \frac\e\vartheta M |\psi| \right)(-y-z) - 1
  + \left(u + \frac\e\vartheta M |\psi| \right)(y)
  \leq \left( 6 - \frac1\kappa \right) \frac1{2\alpha \pi} \frac\e\vartheta + 2M \| \psi \|_\infty \frac\e\vartheta
  \leq 0.
\]
\smallskip

\emph{Case 3: } $y > - \frac z2$. By the symmetry in the estimate on $u$ and $\psi$, this case can be treated similarly as in Cases 1 and 2.
\end{proof}

\subsection{Construction of super solutions before collision}
\label{s:pf:1}

In this section we prove \eqref{pf:zz} in $(0,T_1)$.
%for the case in which $\bx^0$ is such that the solution $\bx$ to \ref{PN} has only 2-particle collisions and such that each collision happens at a distinct time $T_k$. 
The proof is a combination of techniques developed in \cite{GonzalezMonneau12,PatriziValdinoci15,PatriziValdinoci16,PatriziValdinoci17}; a few steps simplify thanks to the recently established Proposition \ref{p:vMPP}. Nonetheless, we give a self-contained proof.% which serves as a blueprint for Sections \ref{s:pf:2} and \ref{s:pf:3} on general collisions. 

\subsubsection{Construction of $\tv_\e$ in $[0, \tT_0]$ }\label{subsubvtildeep}
We  construct a function $\tv_\e$ for which \eqref{pf:zw} holds. Let $\rho_\e := \| \phi_\e^0 \|_\infty$. For parameters $\mu, \varrho_\e > 0$ to be specified later let 
\begin{equation} \label{pf:yv}
  \tilde \bx(t) := \bx_\e^0 - \bb \varrho_\e (1 - e^{- \mu t / \e^2})
\end{equation}
and set 
\begin{equation*} %\label{pf:yd}
  \tv_\e(t, x) := \sum_{i = 1}^N u\left(\displaystyle \frac{x - \tx_i (t)}{\ep} ; b_i\right) + \rho_\e e^{- \mu t / \e^2}.
\end{equation*}
Recalling that $u$ is increasing, we obtain that \eqref{pf:zw:IC} holds for any choice of $\mu, \varrho_\e > 0$. Then, by \cite[Lemma 4.1]{PatriziValdinoci17} there exist $\mu, \, \varrho_\e, \, \ttau_\ep>0$ satisfying  
 \beq\label{pf:zb} \varrho_\ep = o(1),\quad \ttau_\ep=o_\e(1),\quad \tilde \rho_\ep := \rho_\ep e^{-{\mu \ttau_\ep}/{\ep^{2}}}=o(\e) \eeq 
such that, for $\e$ small enough, $\ttau_\e < T_1/2$ and $\tv_\e$ is a supersolution of \eqref{HJe:formal} on $[0, \ttau_\e]$. Then, by \eqref{pf:zw:IC} the comparison principle (see Proposition \ref{comparisonuep}) implies that \eqref{pf:zw:supersol} holds with 
\begin{equation}\label{Ttildezero} 
\tT_0: = \ttau_\e =o_\ep(1). 
\end{equation}
For later use, we note that 
\begin{equation} \label{pf:za}
  \tv_\e(\tT_0, x) = \sum_{i = 1}^N u\left(\displaystyle \frac{x - \tx_i (\ttau_\e)}{\ep} ; b_i\right) + \tilde \rho_\e 
  \quad \text{for all } x \in \R
\end{equation}
and that by \eqref{pf:yv} and $\bx_\e^0 \to \bx^0$ as $\e \to 0$ we have
\begin{equation} \label{pf:yw}
  \max_{0 \leq t \leq \ttau_\e} \max_{1 \leq i \leq N} | \tx_i(t) - x_i^0 |
  = o_\e(1). 
\end{equation}

\subsubsection{Construction of $\o v_\e$ in $[ \tT_0, \o T_1 ]$}
\label{subsubvepbar}
Next we construct $\o v_\e$ for which \eqref{pf:zy} holds. 
First, we assume that a collision happens, i.e.\ $T_1 < T$, and comment on the  case of no collisions afterwards. Considering the perturbed system \eqref{oPN}, we take as parameters 
\begin{equation} \label{pf:xw}
  \o \sigma := \alpha \frac{\tilde \rho_\e}\e + \alpha C_0 \frac\e{\theta_\e} + \delta_\e = o_\e(1)
  \quad \text{and} \quad 
  \o \bx^0 := \tilde \bx (\ttau_\e) - \theta_\e \bb,
\end{equation}
where $\theta_\e, \delta_\e > 0$ are defined in Lemma \ref{l:vebar} (see Remark \ref{r:thetae:dele}), 
%\sp{ $\tilde \bx(t)$ as in \eqref{pf:yv}, $\tau_\ep$ and $\tilde \rho_\e$ as in  \eqref{pf:zb}.}
$\tilde \rho_\e = o(\e)$ (recall \eqref{pf:zb}) and the $\e$-independent constant $C_0 > 0$ is chosen later. Let $\o \bx$ be the solution of \eqref{oPN}, $\osT_1$ be the time of the first collision of $\o \bx$ and, similar to \eqref{oci}-\eqref{ove},
\begin{equation} \label{pf:yz}
  \vs_\ep(t,x)
:=\sum_{i=1}^N ( u - \ep\cs_i(t - \tT_0)\psi ) \left(\displaystyle \frac{x-\xs_i(t - \tT_0)}{\ep} ; b_i \right) + \ep \frac{ \o \sigma - \delta_\e}\alpha
\end{equation} 
 for $t \in [\tT_0, \tT_0 + \osT_1)$ and $x \in \R$. 
 
Next we prove \eqref{pf:zy:IC}. 
We start with bounding $\o c_i(0) = \dot{\o x}_i(0)$. We observe from \eqref{pf:yw} and \eqref{pf:xw} that
\[
  |\o x_i(0) - x_i^0|
  \leq |\o x_i(0) - \tx_i (\ttau_\e)| + |\tx_i (\ttau_\e) - x_i^0|
  = \theta_\e + o_\e(1)
  = o_\e (1)
  \quad \text{for } i =1, \ldots, N.
\]
Hence, recalling the definitions of $\theta, \o\theta$ from \eqref{theta:defs},
\[
  \o \theta_i(0)
  \geq \o \theta (0)
  \geq \frac12 \theta (0)
  > 0
\]
for $\e$ small enough. Then, from the right-hand side of the ODE in \eqref{oPN} we observe that $| \o c_i(0) | \leq C_1$ for some constant $C_1 > 0$ independent of $\e$. 

Using this bound and recalling \eqref{pf:za}, \eqref{pf:xw} and \eqref{pf:yz}, we obtain for the left-hand side in \eqref{pf:zy:IC} that
\[
  (\o v_\e - \tv_\e) (\tT_0, x) 
  \geq \sum_{i=1}^N \left( (u - \e C_1 |\psi|) \left( y_i + b_i \frac{\theta_\e}\e ; b_i\right) 
   - u \left( y_i; b_i \right) \right)
   + \e \frac{\o \sigma  - \delta_\e}\alpha - \tilde \rho_\e,
\]
where $y_i := (x - \tx_i(\ttau_\e))/\e \in \R$. Recalling \eqref{phib} and \eqref{ub}, the summand equals
\[
  (u - \e C_1 |\psi|) \left( b_i y_i + \frac{\theta_\e}\e \right) - u \left( b_i y_i \right). 
\]
Then, applying Lemma \ref{l:u:epsi:new} with $\vartheta_\ep = \theta_\e$, $c=C_1$ and $c'=0$, we obtain 
\begin{equation} \label{pf:xv}
  (\o v_\e - \tv_\e) (\tT_0, x) 
  \geq - C_1 N K(C_1) \frac{\e^2}{\theta_\e} 
   + \e \frac{\o \sigma - \delta_\e}\alpha - \tilde \rho_\e
\end{equation}
for all $\e$ small enough. Then, taking $C_0 := C_1 N K(C_1)$ in our choice of $\o \sigma$ in \eqref{pf:xw}, the right-hand side in \eqref{pf:xv} is non-negative. This proves \eqref{pf:zy:IC}.
 
Next we prove \eqref{pf:zy:supersol}. By Lemma \ref{l:vebar}, we have for all $\e$ small enough that $\o v^\e (t + \tT_0, x)$ is a supersolution of \eqref{HJe:formal} on $\cT \times \R$, where
\[
  \cT := \{ t \in [0, \osT_1) : \o \theta (t) \geq \theta_\e \}.
\]
We claim that there exists a $\o \tau_\e  = o_\e(1)$ such that 
\begin{equation} \label{pf:xp} 
  \cT \supset [0, T_1 - \o \tau_\e]
\end{equation}
for $\e$ small enough. Then, \eqref{pf:zy:supersol} follows from \eqref{pf:zy:IC} by the comparison principle (see Proposition \ref{comparisonuep}) with
\begin{equation} \label{pf:yu}  
  \o T_1 := \tT_0 + T_1 - \o \tau_\e =T_1+o_\ep(1).
\end{equation}
Next we prove the claim.
Recall that $\theta \in C([0, T_1])$ satisfies 
\beqs %\label{firstcollisiontheta}
\theta(t)>0 \quad\text{for }t\in[0,T_1), \text{ and } \quad \theta(T_1)=0.
\eeqs
Then, by Lemma \ref{l:obx}, we have for $\ep$ small enough that there exists $\tau_\ep>0$ such that 
\beq \label{thetabar>=thetaep}\o\theta(t)>\theta_\ep \quad\text{for any }t<\osT_1-\tau_\ep, \text{ and } \quad \o\theta(\osT_1-\tau_\ep)=\theta_\ep.\eeq
Moreover, since $\theta_\ep=o_\ep(1)$, we have 
\beqs %\label{tauep=o1} 
\tau_\ep=o_\ep(1).
\eeqs
Hence, \eqref{pf:xp} follows with 
\beq\label{otaudef}\o\tau_\e:=\tau_\e+T_1-\osT_1=o_\ep(1).\eeq
For later use, we observe that  \eqref{thetabar>=thetaep} can be rewritten as 
 \begin{equation} \label{thetabar=thetaepbis} 
 \o\theta (T_1 - \o \tau_\e) =\theta_\ep= o_\e(1),
\end{equation}
and 
from  \eqref{thetabar>=thetaep} 
%\begin{equation} \label{pf:xf} 
%   \min_{[0, T_1 - \o \tau_\e]} \o\theta=\theta_\e,  \end{equation}
 %  and 
and the ODE in \eqref{oPN} (recall \eqref{pf:yu}) that 
 \begin{equation} \label{pfxe} 
  \max_{1 \leq i \leq N} \max_{t\in [\tT_0, \o T_1 ]} \o c_i (t - \tT_0)\leq \frac C{\theta_\e}.
\end{equation}

 Next, we   prove \eqref{pf:zy:limp}.  Given $t \in (0,T_1)$ and $x \in \R$, let $t_\ep\to t$, and $x_\ep\to x$. Since $t\in(0,T)$,  $\tT_0=o_\ep(1)$ and 
 $\o T_1=T_1+o_\ep(1)$, we may assume that  $\tT_0< t_\ep<\o T_1$ for $\ep $ small enough. 
 By \eqref{pfxe} (recall \eqref{pf:yz} and \eqref{thetaepthetaprop}) we have that 
 \beqs \limsup_{\ep\to0} \o v_\e(t_\ep,x_\ep)= \limsup_{\ep\to 0} \sum_{i=1}^N u  \left(\displaystyle \frac{x_\e - \xs_i(t_\e - \tT_0)}{\ep} ; b_i \right).\eeqs 
 Note that by \eqref{uniformconvergencexbartox}, $x_\e - \xs_i(t_\e - \tT_0)\to x-x_i(t)$ as $\ep \to0$. Then 
 by  Lemma \ref{l:u:to:H} 
\begin{equation*} %\label{pf:xo}
  \limsup_{ \e \to 0 }  \sum_{i=1}^N  u  \left(\displaystyle \frac{x_\e - \xs_i(t_\e - \tT_0)}{\ep} ; b_i \right) 
  \leq \sum_{i=1}^N(b_i H)^* (x - x_i(t)),
\end{equation*}  
which (recall \eqref{v*}) equals $v^*(t,x)$.
This proves \eqref{pf:zy:limp}.

The proof of \eqref{pf:zy} is almost complete; it only remains to treat the case in which no collision happens. This case is simpler, because no collision implies $\theta > 0$ on $[0, T]$. Then, by Lemma \ref{l:obx}, $\o\theta>\theta_\ep$ on $[0,T]$ for $\ep$ small enough and  \eqref{pf:zz} follows as in the previous case.

\subsubsection{Proof of \eqref{pf:zz} in $(0,T_1)$} %\label{limsupot1sec}
Inequality \eqref{pf:zz} in $(0,T_1)$ follows from \eqref{pf:zy:supersol}, \eqref{pf:zy:limp}, \eqref{Ttildezero} and \eqref{pf:yu}.

\subsection{Resolving a simple collision}
\label{s:pf:15}

In this section we prove \eqref{pf:zz} at $T_1$ and on $(T_1, T_2)$  in  the case in which $\bx^0$ is such that the solution $\bx$ to \eqref{PN} has precisely 2 particles, say $x_k$ and $x_{k+1}$, colliding at time $T_1$ (simple collision). We divide two cases depending on the sign of $b_k$. Since $b_{k+1} = -b_k$, we call the case $b_k = 1$ a `$+-$' collision and the case $b_k = -1$ a `$-+$' collision. We mainly focus on the `$+-$' collision (Subsections \ref{subsubvtildel}, \ref{s:pf:1:3} and \ref{s:pf:1:4}), as this is the most challenging of the two (this would be different for the construction of subsolutions). We treat `$-+$' collisions in Subsection \ref{-+seclimsupproof}.

\subsubsection{Construction of $\hat v_\e^\ell$ in $[\o T_1 , \hat T_1]$ for a `$+-$' collision}\label{subsubvtildel}
 %The choice of initial condition for this ODE system requires the following preparation. Regarding the $\e$-independent solution $\bx$ to \eqref{PN},
  
Following  \cite{PatriziValdinoci16},  we construct $\hat v_\e^\ell$ which satisfy \eqref{pf:zx}. We mainly focus on $\hat v_\e^1$. Its construction relies again on a perturbed ODE system of the form \eqref{oPN}.

Let $l_0 > 0$ 
 be such that  
\begin{equation} \label{pf:zq}  
  \theta_i(t) \geq l_0
  \qquad \text{for } i \in \{1 ,\ldots, N-1\} \setminus \{k\}.
\end{equation} 
Recalling the $\e$-dependent particle positions $\o \bx$ from the construction of $\o v_\e$ and \eqref{otaudef}, we set
\beqs %\label{xsbardef}
 \xs_i^\ep:=\xs_i(T_1 - \o \tau_\e),\quad 
\cs_i^\ep := \cs_i(T_1 - \o \tau_\e) = \dot \xs_i( T_1 - \o \tau_\e) \qquad \text{for } i=1,\ldots, N. 
\eeqs
Note from  \eqref{xbartetoxoft} that
\begin{equation} \label{pf:zr}
  | \xs_i^\ep - x_i(T_1) |=o_\e(1)\quad \text{for } i=1,\ldots, N. 
  \end{equation}
%\leq | \xs_i(T_1 - \o \tau_\e) - x_i(T_1 - \o \tau_\e) | + | x_i(T_1 - \o \tau_\e) - x_i(T_1-) | 
%= o_\e(1) \\ 
%\text{for } i=1,\ldots, N.
%\end{multline}
Then by \eqref{pf:zq}, for $\e$ small enough
\beq\label{pf:zl}  
\xs_{i+1}^\ep- \xs_{i}^\ep
\geq \frac{l_0}2 
\quad \text{for } i \neq k
\eeq
and by \eqref{thetabar=thetaepbis}
\beq\label{pf:ys}
\theta_\e
=
\o \theta (T_1 - \o \tau_\e)
\le \xs_{k+1}^\ep - \xs_k^\ep
=: \Theta_\ep 
= o_\e(1). 
\eeq
From the ODE in \eqref{oPN},  \eqref{thetabar=thetaepbis}  and \eqref{pf:zl} we infer that, for $i=1,\ldots, N$, 
 \beq \label{pf:zm}  
   |\cs_i^\ep| 
   \leq C + \begin{cases}
   \dfrac1{\theta_\ep}
   & \text{if } i \in \{ k, k+1 \} \\
   0 & \text{otherwise.}
\end{cases}    
 \eeq
\begin{rem} \label{r:Thetae}
Since $x_k$ and $x_{k+1}$ are the only colliding particles at $T_1$, we actually have in \eqref{pf:ys} that $\o \theta (T_1 - \o \tau_\e)= \xs_{k+1}^\ep - \xs_k^\ep$ for $\ep$ small enough.
Therefore,   $\theta_\ep=\Theta_\ep $ in \eqref{pf:ys}. 
However, in view of the generalization to the current arguments in Sections \ref{s:pf:2} and \ref{s:pf:3}, we will only use that 
$\theta_\ep\le \Theta_\e = o_\e(1)$ in this subsection. 
\end{rem}

Now, for parameters $L > 1$ and 
\begin{equation} \label{pf:wi}
  \hat \sigma = \hat\sigma_\e \geq \delta_\e 
  \quad \text{with} \quad   \hat \sigma = o_\e(1),
\end{equation}
let  $\hat \bx$ be the solution of the perturbed ODE system
 \beq\label{pf:zs}
\left\{ \begin{aligned}
   \dot{\xss}_i 
   &= \displaystyle\sum_{j \neq i} 
\displaystyle\frac{b_i b_j}{ \xss_i-\xss_j} - b_i \hat \sigma 
 &&\text{in }(0, \hsT_1) \text{ for } i = 1,\ldots, N \\
 \xss_i(0) 
 &= \xs_{i}^\ep - b_i \Theta_\ep 
 &&\text{if } i \neq k+1 \\
 \xss_i(0) 
 &= \xs_i^\ep - L b_i \Theta_\epsilon
 &&\text{if } i = k+1,
 \end{aligned} \right. 
\eeq
where $\hsT_1$ is the first collisions time of $\hat \bx$, and $\Theta_\e$ is as in \eqref{pf:ys}. Notice that \eqref{pf:zs} is equivalent to the ODE system in \eqref{oPN} with the parameter choice $\o \sigma = \hat \sigma$ and
\[
  \o x_i^0 = \begin{cases}
    \xs_{i}^\ep - \hat b_i \Theta_\ep 
    &\text{if } i \neq k+1 \\
    \xs_{i}^\ep - \hat b_i L \Theta_\ep 
    &\text{if } i = k+1.
  \end{cases}
\]
The parameter $L$ controls the asymmetry; see Figure \ref{fig:ve:oT}.

Before constructing $\hat v_\e^1$ from $\hat \bx$, we first prove several properties of $\hat \bx$. For $\e$ small enough, we have by \eqref{pf:zl} that the particles $\hat x_i$ are ordered, i.e.\ $\hat \bx \in \Omega^N$. Similar to \eqref{theta:defs}, we set
\beqs %\label{pf:zn} 
\hat {\theta}_i := \hat x_{i+1} - \hat x_i
\quad \text{and} \quad
\hat {\theta} :=\min_{1 \leq i \leq N-1} \hat {\theta}_i 
\qquad \text{on }  [0, \hsT_1). \eeqs

\begin{lem}\label{l:te}
There exist a universal $L>1$ and an $\e_0 > 0$ such that for all $\ep \in (0, \e_0)$ there holds
\beq\label{te} \hat \tau_\e 
:= \frac{L^2}6 \Theta_\e^2
< \hsT_1,
\eeq
where $\Theta_\e$ is as in \eqref{pf:ys},  
 and the solution 
 $\hat \bx (t)$ of \eqref{pf:zs} satisfies for $i = 1,\ldots,N$
\begin{align}  
  \label{l:te:00} \min_{t \in [0, \hat \tau_\e]} \hat \theta_i (t)
  &\geq \frac{l_0}4 
  \quad \text{if } i \neq k, \\   
  \label{l:te:1} \min_{t \in [0, \hat \tau_\e]} \hat \theta(t)
  &\ge\theta_\ep, \\
  %&& \\
  \label{l:te:0} \max_{t \in [0, \hat \tau_\e]} \big| \dot {\hat x}_i(t) \big| 
  &\le \frac2{\theta_\ep},  \\
  %&& \text{for all } i, \\ 
  \label{l:te:2} \max_{t \in [0, \hat \tau_\e]} \big| \xss_i(t) - x_i(T_1) \big| 
  &= o_\ep(1), \\
  %&& \text{for all } i, \\
  \label{l:te:3} \hat x_k (\hat \tau_\e) 
  &\ge \xs_{k+1}^\ep. %&&
\end{align} 
  \end{lem}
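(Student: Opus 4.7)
The plan is to run a bootstrap/continuation argument on the ODE system \eqref{pf:zs}. First I would record the initial configuration: the perturbation of $\o\bx$ by $\pm\Theta_\e$ (and $\pm L\Theta_\e$ for index $k+1$) gives $\hat x_{k+1}(0) - \hat x_k(0) = (L+2)\Theta_\e$, while all other adjacent gaps are at least $l_0/2 - (L+1)\Theta_\e \geq l_0/3$ once $\e$ is small enough with respect to $L$ (here one uses \eqref{pf:zl}). In particular $\hat\bx(0)\in\Omega^N$, and by continuity the solution to \eqref{pf:zs} exists until the first time one of these separations either drops to $\theta_\e/2$ (between the colliding pair) or to $l_0/8$ (for any other gap). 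Call this stopping time $T^\ast\leq\hsT_1$; the bulk of the argument is to show $T^\ast>\hat\tau_\e$, and then \eqref{l:te:1}, \eqref{l:te:00} will follow immediately at the stopping-time level with some room to spare.

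The key object on $[0,T^\ast]$ is $D(t):=\hat x_{k+1}(t)-\hat x_k(t)$. By subtracting the ODEs for indices $k$ and $k+1$ in \eqref{pf:zs}, one gets
\[
\dot D \;=\; -\frac{2}{D} \;+\; \sum_{j\neq k,k+1}\Bigl[\frac{b_{k+1}b_j}{\hat x_{k+1}-\hat x_j}-\frac{b_k b_j}{\hat x_k-\hat x_j}\Bigr] \;+\; 2\hat\sigma,
\]
and the bracketed sum is bounded by $C/l_0$ on $[0,T^\ast]$ since the other gaps remain $\geq l_0/8$ there. Hence $\tfrac12\tfrac{d}{dt}D^2 = D\dot D \leq -2 + C_1 D$, and since $D(t)\leq D(0)=(L+2)\Theta_\e$ one obtains $\tfrac{d}{dt}D^2\leq -3$ for $\e$ small. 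Integration on $[0,\hat\tau_\e]$ gives
\[
D^2(\hat\tau_\e)\;\geq\;(L+2)^2\Theta_\e^2 - 3\cdot\tfrac{L^2}{6}\Theta_\e^2 \;=\;\Bigl(\tfrac{L^2}{2}+4L+4\Bigr)\Theta_\e^2 \;\geq\;\Theta_\e^2\;\geq\;\theta_\e^2
\]
(using $\theta_\e\leq\Theta_\e$ from \eqref{pf:ys}). This already rules out $T^\ast\leq\hat\tau_\e$ being caused by $D$ reaching $\theta_\e/2$; the corresponding global bound then yields \eqref{l:te:1}.

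With $D(t)\geq\theta_\e$ on $[0,\min(T^\ast,\hat\tau_\e)]$, the right-hand side of \eqref{pf:zs} is bounded by $1/D+C\leq 1/\theta_\e+C$, which gives \eqref{l:te:0} for $\e$ small. The velocity bound in turn controls displacements: each $\hat x_i$ moves by at most $(2/\theta_\e)\cdot\hat\tau_\e = \tfrac{L^2}{3}\tfrac{\Theta_\e^2}{\theta_\e}=O(\Theta_\e)=o_\e(1)$ (Remark \ref{r:Thetae} gives $\theta_\e=\Theta_\e$ here). Combined with the initial separation $\geq l_0/3$ of non-colliding pairs, this proves \eqref{l:te:00} and also rules out the other stopping-time mechanism, so $T^\ast>\hat\tau_\e$ as required. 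Estimate \eqref{l:te:2} then follows from $|\hat x_i(0)-\o x_i^\e|=O(\Theta_\e)$, the bound $|\o x_i^\e-x_i(T_1)|=o_\e(1)$ from \eqref{pf:zr}, and the displacement bound just derived.

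The main obstacle, and the reason a large $L$ is needed, is \eqref{l:te:3}. Here I would use the lower bound $\dot{\hat x}_k(t) \geq 1/D(t) - C \geq 1/((L+2)\Theta_\e) - C$ (valid since $k$ and $k+1$ attract, all other forces on $\hat x_k$ are bounded, and $D$ is monotone in $t$ so $D(t)\leq D(0)$). Integrating on $[0,\hat\tau_\e]$:
\[
\hat x_k(\hat\tau_\e)-\hat x_k(0)\;\geq\;\frac{\hat\tau_\e}{(L+2)\Theta_\e} - C\hat\tau_\e \;=\;\frac{L^2}{6(L+2)}\Theta_\e - \frac{CL^2}{6}\Theta_\e^2.
\]
Since $\o x_{k+1}^\e-\hat x_k(0)=\Theta_\e+\Theta_\e=2\Theta_\e$, \eqref{l:te:3} reduces to choosing $L$ with $L^2/(6(L+2))>2$ (any $L\geq 14$ works) and then $\e$ small enough to absorb the $O(\Theta_\e^2)$ correction. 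This fixes the universal $L$ in the statement.
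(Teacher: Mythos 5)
Your proposal follows the same strategy as the paper's proof: a stopping-time/continuation argument on \eqref{pf:zs}, a differential inequality for the critical gap $D(t) = \hat\theta_k(t)$, velocity bounds, and then displacement bounds, with \eqref{l:te:3} driving the choice of $L$. However, there is a sign error in the central step. You derive $D\dot D \leq -2 + C_1 D$ and hence $\frac{d}{dt}D^2 \leq -3$, and then integrate to conclude $D^2(\hat\tau_\e) \geq (L+2)^2\Theta_\e^2 - 3\hat\tau_\e$. Integrating an \emph{upper} bound on $\frac{d}{dt}D^2$ yields an \emph{upper} bound on $D^2(\hat\tau_\e)$, not a lower one. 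What you need for \eqref{l:te:1} is the complementary inequality $D\dot D \geq -2 - C_1 D$, which gives $\frac{d}{dt}D^2 \geq -5$ (say) after absorbing the $2C_1 D$ term, and then $D^2(\hat\tau_\e) \geq \bigl((L+2)^2 - \tfrac{5}{6}L^2\bigr)\Theta_\e^2 > \theta_\e^2$. This is exactly what the paper does via the comparison ODE $\dot\vartheta = -3/\vartheta$. The fix is immediate but the derivation as written is wrong; and note that the upper bound you did derive (the paper's $\dot\vartheta = -1/\vartheta$ comparison) is still needed to justify $D(t) \leq D(0)$, which you use both in \eqref{l:te:3} and when absorbing $C_1 D$.

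A second, more structural point: your displacement estimate for \eqref{l:te:2} integrates $|\dot{\hat x}_i| \leq 2/\theta_\e$ over $[0,\hat\tau_\e]$ and then invokes $\theta_\e = \Theta_\e$ from Remark \ref{r:Thetae}. That is legitimate for the lemma as stated, but the paper deliberately avoids $\theta_\e = \Theta_\e$ here (Remark \ref{r:Thetae} explicitly says only $\theta_\e \leq \Theta_\e$ will be used) precisely so that the same proof carries over to the multi-collision settings of Sections \ref{s:pf:2}--\ref{s:pf:3}, where $\theta_\e$ can be strictly smaller than $\Theta_\e$ and your bound $(2/\theta_\e)\hat\tau_\e = O(\Theta_\e^2/\theta_\e)$ would not be $o_\e(1)$. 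The paper sidesteps this by using monotonicity (for $i=k,k+1$, the trajectories are squeezed between $\hat x_k(0)$ and $\hat x_{k+1}(0)$, which are $(L+2)\Theta_\e$ apart) and the $O(1)$ velocity bound for $i \neq k,k+1$. On the positive side, your velocity lower bound $\dot{\hat x}_k \geq 1/D(0) - C$ (keeping the $-C$ as an $O(\Theta_\e^2)$ correction rather than halving) gives the cleaner threshold $L > 6 + \sqrt{60}$, so $L = 14$ suffices; the paper's $L=26$ comes from the cruder $\dot{\hat x}_k \geq 1/(2\hat\theta_k)$. Both choices work.
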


\begin{proof}
The proof is similar in spirit to the proof of Lemma \ref{l:obx}. %;see in particular Figure \ref{fig:stab} with $\rho$ replaced by $l_0/4$.
 Here, the situation is simpler because only two particles collide, but on the other hand more quantitative estimates need to be established. Therefore, we give an independent proof in full detail.

Take $L = 26$. We start by establishing several estimates on $\hat x_i$ and $\hat \theta_i$. At initial time, we observe from \eqref{pf:zl} and \eqref{pf:zs} that
\begin{equation} \label{pf:xh}
  \hat \theta_i(0) \geq \frac{l_0}3
  \quad \text{for all } i \neq k
\end{equation}
for $\e$ small enough. Hence, recalling \eqref{pf:ys},
\begin{equation} \label{pf:xg}
  \hat \theta (0) 
  = \hat \theta_k(0) 
  = (L+2) \Theta_\e .
\end{equation}

Next, let $\tau_1$ be the largest value in $[0, \hsT_1]$ such that
%\comm{Below, it is bit weird to consider the half-open interval $[0, \tau_1)$, since the property clearly extends to $t = \tau_1$. The problem is that we don't want to describe $\hat x_i$ at $\hat T_1$. Then, the below version is necessary since $\tau_1 = \hat T_1$ is possible. }
\begin{equation} \label{pf:zk}
  \hat \theta_i(t) \geq \frac{l_0}4
  \quad \text{for all } t \in [0, \tau_1) \text{ and all } i \neq k.
\end{equation}
At a later stage we will prove that $\tau_1 = \hsT_1$. For now,
the ODE in \eqref{pf:zs} implies that
\begin{equation} \label{pf:zh}
  \big| \dot{\hat x}_i \big| \leq C
  \quad \text{on } [0, \tau_1) \text{ for } i \neq k, k+1.
\end{equation}
This estimate does not hold for $i = k, k+1$. Instead, we show that $\dot{ \hat x}_{k+1} < 0 < \dot{ \hat x}_{k}$. With this aim
we obtain from the ODE in \eqref{pf:zs} and \eqref{pf:zk} that 
\begin{equation} \label{pf:zj}
  - \frac2{\hat \theta_k} - \frac{8N}{l_0} - 2 \hat \sigma
  \leq \dot{\hat \theta}_k
  \leq - \frac2{\hat \theta_k} + \frac{8N}{l_0} + 2 \hat \sigma
  \quad \text{on } [0, \tau_1).
\end{equation}
Since the right-hand side is increasing in $\hat \theta_k$ and negative at $t = 0$ for $\e$ small enough, we have $\dot{\hat \theta}_k < 0$ on $[0, \tau_1)$. In particular, for $\e$ small enough, $\dot{\hat \theta}_k \leq -1/\hat \theta_k$. Comparing this to the ODE $\dot \vartheta = -1/\vartheta$ with $\vartheta(0) = \hat \theta_k(0) = (L+2) \Theta_\e$, we obtain
\begin{equation} \label{pf:xa}
  \hat \theta_k(t)
  \leq \vartheta(t) = \sqrt{ (L+2)^2 \Theta_\e^2 - 2t }
  \qquad \text{for all } t \in [0, \tau_1).
\end{equation}
Then, using the ODE in \eqref{pf:zs} for $\hat x_k$ and applying similar estimates, we obtain for $\e$ small enough
\begin{equation} \label{pf:zg}
  \frac2{\hat \theta_k}
  \geq \dot{ \hat x}_k 
  \geq \frac1{2 \hat \theta_k} 
  \geq \frac1{2 (L+2) \Theta_\e}
  > 0
  \quad \text{on } [0, \tau_1).
\end{equation}
Similarly, we obtain for $\e$ small enough that
\begin{equation} \label{pf:yy} 
  - \frac2{\hat \theta_k}
  \leq \dot{ \hat x}_{k+1} 
  \leq - \frac1{2 (L+2) \Theta_\e}
  < 0
  \quad \text{on } [0, \tau_1).
\end{equation} 
Similar to the derivation of \eqref{pf:xa} we derive a lower bound on $\hat \theta_k$. The left-hand side in \eqref{pf:zj} is bounded from below by $-3/{\hat \theta_k}$ for $\e$ small enough. Comparing this with the ODE $\dot \vartheta = -3/\vartheta$ with $\vartheta(0) = \hat \theta_k(0)$, we obtain
\begin{equation} \label{pf:wz}
  \hat \theta_k(t)
  \geq \sqrt{ (L+2)^2 \Theta_\e^2 - 6t }
  \qquad \text{for all } 0 \leq t < \min \left\{ \tau_1, \frac{(L+2)^2}6 \Theta_\e^2 \right\}.
\end{equation}
The bounds on the velocities on $\hat x_i$ and \eqref{pf:xh} imply that
\begin{multline*} %\label{pf:xb}
  \hat \theta_i(t)
  \geq \hat \theta_i(0) - \int_0^t \big| \dot{\hat x}_i(s) \big| + \big| \dot{\hat x}_{i+1} (s) \big| \, ds
  \geq \frac{l_0}3 - 2C \tau_1
  \\
   \text{for all } t \in [0, \tau_1) \text{ and all } i \notin \{k-1,k,k+1\},
\end{multline*}
\[
  \hat \theta_{k-1}(t)
  \geq \hat \theta_{k-1}(0) + \int_0^t \dot{\hat x}_k(s) - \big| \dot{\hat x}_{k-1} (s) \big| \, ds
  \geq \frac{l_0}3 - C \tau_1
  \quad \text{for all } t \in [0, \tau_1),
\]
and, similarly, $\hat \theta_{k+1}(t) \geq {l_0}/3 - C \tau_1$ for all $t \in [0, \tau_1)$. This completes the preliminary estimates on $\dot{\hat x}_i$ and $\hat \theta_i$. 
\medskip

Next we use the estimates on $\hat x_i$ and $\hat \theta_i$ to prove Lemma \ref{l:te}. We start with \eqref{te}. From the lower bounds on $\hat \theta_i$, we observe that \eqref{pf:zk} holds with $\tau_1 = \min\{ c, \hsT_1 \}$ for some $\e$-independent $c > 0$. Furthermore, from \eqref{pf:xa} we obtain for $\e$ small enough that $\hat \theta = \hat \theta_k$ on $[0,\tau_1)$, and that $\tau_1 = o_\e(1)$. Hence,
\[
  \tau_1 = \hsT_1 = o_\e(1).
\] 
Moreover, from \eqref{pf:wz} we obtain that
\[
  \hsT_1 
  \geq \frac{(L+2)^2}6 \Theta_\e^2
  > \hat \tau_\e,
\]
which proves \eqref{te}. Then, \eqref{l:te:00} is given by \eqref{pf:zk}, \eqref{l:te:1} follows from \eqref{pf:wz}, and \eqref{l:te:0} follows from \eqref{pf:zh}, \eqref{pf:zg} and \eqref{pf:yy}. 

Next  we prove \eqref{l:te:2}. Fixing $i$, we expand and estimate
\begin{equation*} %\label{prova 2}
  \max_{t \in [0, \hat \tau_\e]}\big| \xss_i(t) - x_i(T_1) \big|
  \leq \max_{t \in [0, \hat \tau_\e]} \big| \xss_i(t) - \xss_i(0) \big|
  + \big| \xss_i(0) - \o x_i^\e \big|
  + \big| \o x_i^\e - x_i(T_1) \big|.
\end{equation*}
From \eqref{pf:zs} and \eqref{pf:zr} we observe that the second and third term in the right-hand side are $o_\e(1)$. By \eqref{pf:zh}, the first term is also $o_\e(1)$ when $i \notin \{k, k+1\}$. If $i = \{k, k+1\}$, we use the monotonicity of $\dot{\hat x}_i$ to deduce that
\[
  \hat x_k(0) \leq \hat x_k(t) \leq \hat x_{k+1}(t) \leq \hat x_{k+1}(0)
  \quad \text{for all } t \in [0, \hat \tau_\e]. 
\]
Then, from \eqref{pf:xg} we obtain $| \xss_i(t) - \xss_i(0) | = o_\e(1)$. This proves \eqref{l:te:2}.  

Finally, we prove \eqref{l:te:3}. Using \eqref{pf:zg}, we compute
\[
  \hat x_k(\hat \tau_\e)
  = \hat x_k(0) + \int_0^{\hat \tau_\e} \dot{ \hat x}_k
  \geq \o x_k^\e - \Theta_\e + \frac{\hat \tau_\e}{2 (L+2) \Theta_\e}
  = \o x_{k+1}^\e - 2 \Theta_\e + \frac{L^2}{12 (L+2)} \Theta_\e.
\]
Substituting $L = 26$, we obtain \eqref{l:te:3}.
\end{proof}

With Lemma \ref{l:te} in hand, we return to proving \eqref{pf:zx} for $\ell = 1$. Similar to \eqref{pf:yz}, we set
\beq\label{pf:zf}\begin{split}
\hat v_\ep^1(t,x)
&:=\sum_{i=1}^N ( u - \ep \hat c_i(t - \o T_1)\psi ) \left(\displaystyle \frac{x- \hat x_i(t - \o T_1)}{\ep} ; b_i \right) 
   + \frac\ep\alpha (\hat \sigma - \delta_\e) \end{split} 
\eeq
for $t \in [ \o T_1, \o T_1 + \hat \tau_\e]$ and $x \in \R$, where $\hat c_i := \dot{ \hat x}_i$ and $ \hat \sigma $ is chosen below in \eqref{pf:wj}.

We start with proving \eqref{pf:zx:IC}. Note that 
\begin{multline*} %\label{pf:yp}
   \big( \hat v_\ep^1 - \vs_\ep \big) (\o T_1, x)
= \frac\ep\alpha (\hat \sigma - \o \sigma) \\
+\sum_{i=1}^N \left[ ( u - \ep \hat c_i(0)\psi ) \left(\displaystyle \frac{x-  \xs_i^\e + L_i b_i \Theta_\e}{\ep} ; b_i \right)  - ( u - \ep \cs_i^\e \psi ) \left(\displaystyle \frac{x-\xs_i^\e}{\ep} ; b_i \right) \right],
\end{multline*} 
where $L_i := 1$ if $i \neq k+1$ and $L_{k+1} := L$.
For each $i$ we apply Lemma \ref{l:u:epsi:new} (with $\vartheta_\e = \theta_\e$) to the summand. From $\Theta_\e \geq \theta_\e$ (recall \eqref{pf:ys}) and the bounds on the velocities in \eqref{pf:zm} and \eqref{l:te:0} it follows that the conditions of Lemma \ref{l:u:epsi:new} are met. Applying Lemma \ref{l:u:epsi:new} we obtain
\begin{equation*} %\label{pf:yc}
  (\hat v_\e^1 - \o v_\e) (\o T_1, x)
  \geq  \frac\ep\alpha ( \hat \sigma - \o \sigma ) - C_0 \frac{\e^2}{\theta_\e^2 }
  \quad \text{for all } x \in \R
\end{equation*}
for some constant $C_0 > 0$ independent of $\e$. Hence, taking
\begin{equation} \label{pf:wj}
   \hat \sigma := \o \sigma + \alpha C_0 \frac{\e}{\theta_\e^2} = o_\e(1),
\end{equation}
\eqref{pf:zx:IC} follows and $\hat \sigma$ satisfies the requirements in \eqref{pf:wi} (recall \eqref{thetaepthetaprop}). 

Next we prove \eqref{pf:zx:supersol}. Lemma \ref{l:vebar} and \eqref{l:te:1} 
 imply that the function $\hat v^1_\e$ is a supersolution of \eqref{HJe:formal} on $[\o T_1, \o T_1 + \hat \tau_\e]$. Then, the comparison principle in Proposition \ref{comparisonuep}  yields \eqref{pf:zx:supersol} with
\begin{equation*} %\label{hatT_1}
  \hat T_1 := \o T_1 + \hat \tau_\e =T_1+o_\ep(1).
\end{equation*}

In preparation for proving the bound \eqref{pf:wy}\footnote{\eqref{pf:wy} is stated for $k=1$; the minor changes for general $k$ are obvious} on $\hat v_\e^3$, we derive a sufficient upper bound for $\hat v_\e^1(\hat T_1, x)$. More precisely, we show that for all $\e > 0$ small enough and all $x \leq (\o x_k^\e + \o x_{k+1}^\e)/2$
\beq\label{pf:ze} 
\hat v^1_\ep(\hat T_1,x)
\leq \sum_{i \in S_1} u\left(\displaystyle \frac{x - \hat x_i(\hat \tau_\e)}{\ep} ; b_i\right) + \hat \rho_\e
\eeq
for some constant $\hat \rho_\e = o_\e(1)$, where 
\begin{equation*} %\label{S1}
  S_1 := S_{T_1} = \{1,\ldots,N\} \setminus \{k,k+1\}.
\end{equation*}
To prove \eqref{pf:ze} we write the left-hand side as in \eqref{pf:zf}. Then, it follows from  $\hat \sigma = o_\e(1)$ and the bound on $|{\hat c}_i|$ in \eqref{l:te:0} that it is sufficient to show that
\begin{equation} \label{pf:zd}
  u\left(\displaystyle \frac{x - \hat x_k(\hat \tau_\e)}{\ep} ; +1\right) + u\left(\displaystyle \frac{x - \hat x_{k+1}(\hat \tau_\e)}{\ep} ; -1 \right)
  \leq \hat \rho_\e
\end{equation}
for some $\hat \rho_\e = o_\e(1)$. 
Note that the left-hand side equals
\[
  u\left(\displaystyle \frac{x - \hat x_k(\hat \tau_\e)}{\ep} \right) + u\left(\displaystyle \frac{\hat x_{k+1}(\hat \tau_\e) - x}{\ep} \right) - 1.
\]
We bound the second term from above simply by $1$. For the first term, we use that $u$ is increasing, $\hat x_k(\hat \tau_\e) \geq \o x_{k+1}^\e$ and $x \leq (\o x_k^\e + \o x_{k+1}^\e)/2$ to get %bound from above the display above by
\[
  u\left(\displaystyle \frac{x - \hat x_k(\hat \tau_\e)}{\ep} \right)   
  \le u\left(\displaystyle \frac{\o x_k^\e - \o x_{k+1}^\e}{2 \ep} \right)
  = u\left(\displaystyle - \frac{\Theta_\e}{2 \ep} \right)
  \leq u\left(\displaystyle - \frac{\theta_\e}{2 \ep} \right),
\]
which by Lemma \ref{l:u} is bounded by $3 \e / (\alpha \pi \theta_\e) = o_\e(1)$ for $\e$ small enough. Hence, \eqref{pf:zd} and consequently the claim follow.
\medskip 

Next we construct $\hat v_\e^2$. The construction is similar to that of $\hat v_\e^1$. To distinguish it, we change the notation in the construction of $\hat v_\e^1$ to 
$$ 
  \hat \bx^1 := \hat \bx
  , \quad
  \hat \rho_\e^1 := \hat \rho_\e.
$$
We construct $\hat \bx^2$ analogously to $\hat \bx^1$ as the solution to \eqref{pf:zs}, with the only difference that the constant $L$ in the initial condition is swapped from particle $k+1$ to $k$, i.e.\ we set instead
\[
  \xss_k^2(0) 
 := \xs_k^\ep - L b_k \Theta_\ep, \qquad
 \xss_{k+1}^2(0) 
 := \xs_{k+1}^\ep - b_{k+1} \Theta_\epsilon.
\]
Then, by taking $\e$ even smaller if needed, Lemma \ref{l:te} still holds with the same value for $\hat \tau_\e$ and $L$, and only \eqref{l:te:3} changes into
\begin{equation*}
  \hat x_{k+1}^2 (\hat \tau_\e) \le \xs_k^\ep.
\end{equation*}
Analogously to the case of $\hat v_\e^1$, we obtain that $\hat v_\e^2$ defined as in \eqref{pf:zf} with respect to $\hat \bx^2$ satisfies \eqref{pf:zx}, and that instead of \eqref{pf:ze} we obtain that there exists a constant $\hat \rho_\e^2 = o_\e(1)$ such that for all $\e > 0$ small enough and all $x \geq (\o x_k^\e + \o x_{k+1}^\e)/2$
\beqs 
\hat v_\ep^2(\hat T_1,x)
\leq \sum_{i \in S_1} u\left(\displaystyle \frac{x - \hat x_i^2(\hat \tau_\e)}{\ep} ; b_i\right) + \hat \rho_\e^2.
\eeqs

Combining this estimate with the one on $\hat v_\e^1$ in \eqref{pf:ze} and with \eqref{pf:zx}, we obtain that
$
  \hat v_\e^3 = \min \{ \hat v_\e^1, \hat v_\e^2 \}
$
satisfies 
\begin{equation} \label{pf:zc}
  v(\hat T_1,x)
   \leq \hat v_\e^3 (\hat T_1,x)
   \leq \sum_{i \in S_1} u\left(\displaystyle \frac{x - \hat x_i^\e}{\ep} ; b_i\right) + \hat \rho_\e^3
   =: \tilde v_\e (\hat T_1, x)
   \qquad \text{for all } x \in \R,
\end{equation}   
where 
\begin{equation} \label{pf:xd}
\hat \rho_\e^3 := \max \{ \hat \rho_\e^1, \hat \rho_\e^2 \} = o_\e(1),
\qquad 
  \hat x_i^\e :=
  \begin{cases}
  \min \{ \hat x_i^1, \hat x_i^2 \}(\hat \tau_\e)
  &\text{if } b_i = 1 \\
  \max \{ \hat x_i^1, \hat x_i^2 \}(\hat \tau_\e)
  &\text{if } b_i = -1
  \end{cases} 
\end{equation}
for all $i \in S_1$.     

\subsubsection{Supersolutions for the `$+-$' collision beyond $\hat T_1$}
\label{s:pf:1:3} 
In preparation for proving \eqref{pf:zz} at $t = T_1$, we extend our construction of supersolutions beyond $\hat T_1$ to some time point which is larger than $T_1$ uniformly in $\e$. By replacing in Subsection \ref{subsubvtildeep} the parameters $\rho_\e$ and $\bx_\e^0$ by respectively $\hat \rho_\e^3$ and $\{ \hat x_i^\e \}_{i \in S_1}$ defined in \eqref{pf:xd}, we observe from \eqref{l:te:2} and \eqref{pf:zc} that the construction in Subsection \ref{subsubvtildeep} yields a $\tT_1 > \hat T_1$ and a supersolution $\tilde v_\e$ such that \eqref{pf:zw} holds on  the time interval $[\hat T_1, \tT_1]$ for $\e$ small enough. Then, the construction in Subsection \ref{subsubvepbar} yields a supersolution $\o v_\ep$ on $[\tilde T_1, \o T_2]$ with $\o T_2 = T_2 + o_\e(1)$, which is larger than $T_1$ uniformly in $\e$ for $\e$ small enough. This proves \eqref{pf:zz} on $(T_1, T_2)$ for a `$+-$' collision at $T_1$.

\subsubsection{Proof of \eqref{pf:zz}  at $t = T_1$ for a `$+-$' collision} \label{s:pf:1:4} 

Here we prove \eqref{pf:zz} at $T_1$ under the assumption that $b_k=1$. Precisely, we show that for all $x \in \R$, $x_\e \to x$ and $t_\e \to T_1$ as $\ep\to0$ that
% \comm{below, note that actually $\sum_{i = k}^{k+1} (b_i H)^* (x - x_i(T_1-)) = \mathbf 1_{x = y}$, where $\mathbf 1$ is the characteristic function, and $y := x_k(T_1-) = x_{k+1}(T_1-)$ is the point where the collision happens.}
\begin{equation} \label{pf:xt}
  \limsup_{\e \to 0} v_\e(t_\e, x_\e) 
  \leq v^*(T_1, x)
  = \sum_{i = 1}^N (b_i H)^* (x - x_i(T_1)).
\end{equation}
The equality in \eqref{pf:xt} follows from \eqref{v*} by noting that
\[
  \chi(T_1, x) = H^* (x - x_k(T_1)) + (-H)^* (x - x_{k+1}(T_1)). 
\]

We make use of the supersolutions $\o v_\e |_{[\tT_0, \o T_1)}, \hat v_\e^1, \tilde v^1_\e$ and $\o v_\e |_{[\tT_1, \o T_2)}$ which all bound $v_\e$ from above.However, these bounds hold on the different time intervals separated by $\o T_1, \hat T_1, \tT_1$ (see Figure \ref{fig:timeline}). Since $\o T_1, \hat T_1, \tT_1$ can all be expressed as $T_1 + o_\e(1)$, it depends on $t_\e$ which of the four supersolutions bounds $v_\e$ from above. We therefore split four cases depending on $t_\e$.

If, along a subsequence (not relabelled), $t_\e \in [\tT_0, \o T_1)$, then by \eqref{pf:zy:supersol} (recalling \eqref{pf:yz}) 
\begin{equation} \label{pf:yq}   
  v_\e(t_\e, x_\e)
  \leq \o v_\e(t_\e, x_\e)
  \leq \sum_{i=1}^N (u - \e \o c_i(t_\e - \tT_0) \psi ) \left(\displaystyle \frac{x_\e - \xs_i(t_\e - \tT_0)}{\ep} ; b_i\right) + o_\e(1).
\end{equation}
By \eqref{pfxe} (recalling $T_1 - \o \tau_\e = \o T_1 - \tT_0$ and \eqref{thetaepthetaprop}) the terms related to $\psi$ can be absorbed in the remainder $o_\e(1)$. For the term related to $u$, we have by \eqref{xbartetoxoft}  that $\xs_i(t_\e - \tT_0) = x_i(T_1) + o_\e(1)$. Then, using Lemma \ref{l:u:to:H}, \eqref{pf:xt} follows for the extracted subsequence of $\e$.

If, along a subsequence (not relabeled), $t_\e \in [\o T_1, \hat T_1)$, then by \eqref{pf:zx:supersol},  \eqref{pf:xt} follows for this subsequence by replacing $\o v_\e$ in \eqref{pf:yq} by $\hat v_\e^1$ and by proceeding in a similar manner. Indeed, \eqref{l:te:0} provides a sufficient bound on $\hat c_i$ and \eqref{l:te:2} shows that $\hat x_i(t_\e - \o T_1) = x_i(T_1) + o_\e(1)$. 

If, along a subsequence (not relabeled), $t_\e \in [\hat T_1, \tT_1)$, then (recalling \eqref{pf:za} and $S_1 = S_{T_1}$)
\begin{equation*} %\label{pf:xz}   
  v_\e(t_\e, x_\e)
  \leq \tv_\e(t_\e, x_\e)
  \leq \sum_{i \in S_1} u\left(\displaystyle \frac{x_\e - \tx_i(t_\e - \hat T_1)}{\ep} ; b_i\right) + o_\e(1).
\end{equation*} 
Here, $\tilde x_i$ is defined similar to \eqref{pf:yv}, but with $\bx_\e^0$ replaced by $\{ \hat x_i^\e \}_{i \in S_1}$ (recall \eqref{pf:xd}). Again, a similar estimate as \eqref{pf:yw} holds, from which we infer $\tx_i(t_\e - \hat T_1) = x_i(T_1) + o_\e(1)$.  Then, using Lemma \ref{l:u:to:H}, we obtain
\[
  \limsup_{\e \to 0} v_\e(t_\e, x_\e) 
  \leq \sum_{i \in S_1} (b_i H)^* (x - x_i(T_1))
  \leq v^*(T_1, x).
\]
The case $t_\e \in [\tT_1, \o T_2)$ can be treated similarly as the case $t_\e \in [\tT_0, \o T_1)$; we omit the details. This completes the proof of \eqref{pf:xt}. 

\subsubsection{The case of a `$-+$' collision}
\label{-+seclimsupproof}
Here, we construct $\hat v_\e^\ell$ and prove \eqref{pf:zz} on $[T_1, T_2)$ in the remaining case
$b_k = -1$. 

The construction of $\hat v^\ell_\e$ in Subsection \ref{subsubvtildel} can be skipped completely; see Figure \ref{fig:ve:oT:inversed} for a sketch. We simply set $\hat T_1 = \o T_1$ and show that 
\begin{equation} \label{pf:wx} 
  \vs_\ep(\o T_1, x)
=\sum_{i=1}^N ( u - \ep \cs_i^\e \psi ) \left(\displaystyle \frac{x-\xs_i^\e}{\ep} ; b_i \right)  + \frac\ep\alpha (\o \sigma-\delta_\e)
\end{equation}
can be bounded from above similarly to the bound on $\hat v_\e$ in \eqref{pf:zc}. Recalling \eqref{pf:zm}, the terms in $\vs_\ep(\o T_1, x)$ related to $\psi$ and the constant are $o_\e(1)$ uniformly in $x$. For the colliding particles, we observe from $\o x_{k+1}^\e - \o x_k^\e \geq \theta_\e$ that Lemma \ref{l:u:epsi:dipole} applies with $c = c' = 0$ and $\vartheta_\e = \theta_\e$. 
This application yields %\comm{actually we can say $\leq 0$ if we would need it. It's one of the lemmas Stefania wrote where there is no $\psi$} 
\begin{equation*} %\label{pf:xr}
  u\left(\displaystyle \frac{x - \o x_k^\e}{\ep} ; b_k \right) + u\left(\displaystyle \frac{x - \o x_{k+1}^\e}{\ep} ; b_{k+1} \right)
  \leq C \frac{\e^2}{\theta_\e^2}.
\end{equation*}
Substituting this into the expression of $\vs_\ep(\o T_1, x)$, we obtain 
\begin{equation} \label{pf:xs}
v_\ep(\o T_1, x)\le  \o v_\e (\o T_1, x)
\leq \sum_{i \in S_1} u\left(\displaystyle \frac{x - \o x_i^\e}{\ep} ; b_i\right) 
   + \tilde \rho_\e
\end{equation}
for some constant $\tilde \rho_\e = o_\e(1)$, which is qualitatively similar to the bound in \eqref{pf:zc}, as desired.

Using \eqref{pf:xs}, the construction in Subsection \ref{s:pf:1:3} of $\tv_\e$ and $\o v_\ep$ in 
$[\hat T_1, \tilde T_1]$ and $[\tilde T_1, \o T_2]$ applies with obvious modifications. This proves \eqref{pf:zz} on $(T_1, T_2)$.

It remains to prove \eqref{pf:zz} at $t = T_1$. The difference with the setting of the `$+-$' collision in Section \ref{s:pf:1:4} is that the value of $v^*(T_1, x)$ is different if and only if $x = x_k(T_1)$ (see \eqref{v*}). Consequently, instead of \eqref{pf:xt}, we have to prove 
\begin{equation*} %\label{pf:xt}
  \limsup_{\e \to 0} v_\e(t_\e, x_\e) 
  \leq \sum_{i \in S_1} (b_i H)^* (x - x_i(T_1))
\end{equation*}
for all $x \in \R$, $x_\e \to x$ and $t_\e \to T_1$ as $\ep\to0$. The proof in Section \ref{s:pf:1:4} applies with mainly obvious modifications; the only nontrivial modification is that we require the following claim: if $t_\e \in [\tT_0, \o T_1)$ along a subsequence, then 
\begin{equation*}  
  \limsup_{\e \to 0} u\left(\displaystyle \frac{x_\e - \xs_k(t_\e - \tT_0)}{\ep} ; -1 \right)
  + u\left(\displaystyle \frac{x_\e - \xs_{k+1}(t_\e - \tT_0)}{\ep} ; +1 \right)
  \leq 0.
\end{equation*}
This claim follows from Lemma \ref{l:u:epsi:dipole} with $c = c' = 0$ and $\vartheta_\e = \theta_\e \leq \xs_{k+1}(t_\e - \tT_0) - \xs_k(t_\e - \tT_0)$ (recall \eqref{pf:yu}, \eqref{thetabar>=thetaep} and \eqref{thetaepthetaprop}).

\subsection{Multiple-particle collisions at one point}
\label{s:pf:2}

In this section we treat the case in which $\bx^0$ is such that at  collision time $T_1$, there is exactly one point $y \in \R$ at which the collision takes place.
  We denote  the set of indices of particles $x_i$ colliding at $(T_1, y)$ as
\[
  I := \{ k, k+1, \ldots, K \}.
\]
By Proposition \ref{p:vMPP} the orientations of the colliding particles are alternating, i.e.\
\begin{equation} \label{pf:ym} 
  b_i b_{i+1} = -1
  \quad \text{for all } k \leq i \leq K-1.
\end{equation}

We start from $\tv_\e$ and $\o v_\e$ as constructed in Section \ref{s:pf:1}. Then, we construct $\hat v_\e^\ell$ and the subsequent $\tv_\e$ and $\o v_\e$ in Subsection \ref{s:pf:2:1} by modifying the construction in Section \ref{s:pf:15}. We prove \eqref{pf:zz} at $T_1$ in Subsection \ref{proofatT1secgeneral}.

\subsubsection{Construction of $\hat v_\e^\ell$, $\tv_\e$ and $\o v_\e$ on $[\o T_1, \o T_2]$} 
\label{s:pf:2:1}

We recall that
\begin{equation} \label{pf:yi}
   \vs_\ep(\o T_1, x)
=\sum_{i=1}^N ( u - \ep\cs_i^\e \psi ) \left(\displaystyle  \frac{x-\xs_i^\e}{\ep}; b_i \right) + \frac \ep\alpha (\o \sigma - \delta_\e).
\end{equation} 
While \eqref{pf:zr} still holds as stated in the current setting, most of the properties of $\o c_i^\e$ and $\o x_i^\e$ hold with minor changes. For instance, instead of \eqref{pf:zq}, we have
\begin{equation} \label{pf:yn} 
  \theta_i(T_1) \geq l_0
  \qquad \text{for all } i \leq k - 1 \text{ and all } i \geq K
\end{equation}
for some fixed $l_0 > 0$. Then, we obtain instead of \eqref{pf:zl} and \eqref{pf:ys} that
\beqs %\label{pf:yl}  
\xs_{i+1}^\ep- \xs_{i}^\ep
\geq \frac{l_0}2 
\quad \text{for all } i \leq k - 1 \text{ and all } i \geq K
\eeqs
and
\beq\label{pf:yk}
\theta_\e
=
\o \theta (T_1 - \o \tau_\e)
\leq \xs_K^\ep - \xs_k^\ep
=: \Theta_\ep = o_\e(1),
\eeq
where the last equality follows from \eqref{pf:zr} and $x_k(T_1) = x_K(T_1)$. 

Finally, a similar estimate on $|\cs_i^\ep|$ as \eqref{pf:zm} holds. However, its derivation becomes more involved since $K-k$ many particles are $o_\e(1)$ close to each particle $x_i$ with $i \in I$. 
By \eqref{pf:ym} we obtain for the contribution of the particles with $j \in I$ to $\cs_i^\ep = \dot{ \o x}_i (\o T_1 - \ttau_\e)$ with $i \in I$ that
\[
   \sum_{ j = k }^{i-1} 
 \frac{b_i b_j }{ \xs_i-\xs_j}
  =  \sum_{ j = k }^{i-1} 
 \frac{ (-1)^{i-j} }{ \xs_i-\xs_j}
\in [-\theta_\e^{-1}, 0]
\] 
and 
\[
  \displaystyle\sum_{ j = i+1 }^K 
\displaystyle\frac{b_i b_j }{ \xs_i-\xs_j}
  = \displaystyle\sum_{ j = i+1 }^K 
\displaystyle\frac{ (-1)^{j-i+1} }{ \xs_j - \xs_i}
\in [0, \theta_\e^{-1}].
\] 
Then, from a similar derivation that led to \eqref{pf:zm}, we obtain  for $i=1,\ldots, N$ that
 \beqs %\label{pf:yj}  
   |\cs_i^\ep| 
   \leq C + \begin{cases}
   \dfrac1{\theta_\ep}
   & \text{if } k \leq i \leq K \\
   0 & \text{otherwise.}
\end{cases}    
 \eeqs

We recall that in Section \ref{s:pf:1} we split two cases depending on the sign of $b_k$. Here, we split an additional two cases depending on whether 
$$
\# I = K-k+1
$$ is even or odd, yielding a total of four cases. For each of these four cases Figure \ref{fig:mult-part-coll} illustrates a schematic of the easiest nontrivial situation. In view of Figure \ref{fig:mult-part-coll}, the idea is to use Lemma \ref{l:u:epsi:dipole} to remove the downward bumps in the graph of $\o v_\e$, which results in a graph which corresponds to either $0$, $1$ or $2$ particles.

\begin{figure}[ht]
\centering
\begin{tikzpicture}[scale=1.8, >= latex]
\draw[->] (0,-1.25) -- (0,.25);
\draw[->] (-.25,0) -- (5.25,0) node[right] {$x$};
\draw (0,-1) node[left]{$-1$} --++ (5.25,0);
\draw (-.2,-.5) node[left]{Case 1};

\showfig{
\draw[domain=-.25:1.5, smooth, thick, red] plot (\x,
  { -.55*( atan( 10*(\x - 1) )/90 + 1 ) 
    + .55*( atan( 10*(\x - 2) )/90 + 1 ) 
    - .55*( atan( 10*(\x - 2.75) )/90 + 1 )
    + .55*( atan( 10*(\x - 4) )/90 + 1 )
    + .04 });
\draw[domain=1.5:2.375, smooth, thick, red] plot (\x,
  { -.55*( atan( 10*(\x - 1) )/90 + 1 ) 
    + .55*( atan( 10*(\x - 2) )/90 + 1 ) 
    - .55*( atan( 10*(\x - 2.75) )/90 + 1 )
    + .55*( atan( 10*(\x - 4) )/90 + 1 )
    + .04 });
\draw[domain=2.375:3.375, smooth, thick, red] plot (\x,
  { -.55*( atan( 10*(\x - 1) )/90 + 1 ) 
    + .55*( atan( 10*(\x - 2) )/90 + 1 ) 
    - .55*( atan( 10*(\x - 2.75) )/90 + 1 )
    + .55*( atan( 10*(\x - 4) )/90 + 1 )
    + .04 });
\draw[domain=3.375:5.25, smooth, thick, red] plot (\x,
  { -.55*( atan( 10*(\x - 1) )/90 + 1 ) 
    + .55*( atan( 10*(\x - 2) )/90 + 1 ) 
    - .55*( atan( 10*(\x - 2.75) )/90 + 1 )
    + .55*( atan( 10*(\x - 4) )/90 + 1 )
    + .04 });            
\draw[thick, blue] (-.25,.06) -- (5.25,.06);

\draw[<->] (2,-.5) --++ (.75,0) node[midway, above]{$\geq \theta_\e$};
\draw[<->] (1,-.7) --++ (3,0) node[midway, below]{$\Theta_\e$};

\draw[red] (1,0) --++ (0,-.5);
\draw[red] (2,0) --++ (0,-.5);
\draw[red] (2.75,0) --++ (0,-.5);
\draw[red] (4,0) --++ (0,-.5);
\draw[red] (1,.06) node[above] {$\o x_k^\e$};
\draw[red] (2,.06) node[above] {$\o x_{k+1}^\e$};
\draw[red] (2.75,.06) node[above] {$\o x_{k+2}^\e$};
\draw[red] (4,.06) node[above] {$\o x_K^\e$};
\draw[red] (4, -.5) node[right] {$\o v_\e$};
\draw[blue] (3.375, .06) node[above]{$v_\e^1$};

\draw[<-] (5,.01) --++ (0,-.3) node[below] {$o_\e(1)$}; 
\draw[<-] (5,.08) --++ (0,.3);

\begin{scope}[shift={(0,-2.75)},scale=1] 
	\draw[->] (0,-.25) -- (0,1.25);
	\draw[->] (-.25,0) -- (5.25,0) node[right] {$x$};
	\draw (0,1) node[left]{$1$} --++ (5.25,0);
	\draw (-.2,.5) node[left]{Case 2};

	\draw[domain=-.25:2.375, smooth, thick, red] plot (\x,
	  { .55*( atan( 10*(\x - 2) )/90 + 1 ) 
	    - .55*( atan( 10*(\x - 2.75) )/90 + 1 )
	    + .55*( atan( 10*(\x - 4) )/90 + 1 )
	    + .00 });
	\draw[domain=2.375:3.375, smooth, thick, red] plot (\x,
	  { .55*( atan( 10*(\x - 2) )/90 + 1 ) 
	    - .55*( atan( 10*(\x - 2.75) )/90 + 1 )
	    + .55*( atan( 10*(\x - 4) )/90 + 1 )
	    + .00 });
	\draw[domain=3.375:5.25, smooth, thick, red] plot (\x,
	  { .55*( atan( 10*(\x - 2) )/90 + 1 ) 
	    - .55*( atan( 10*(\x - 2.75) )/90 + 1 )
	    + .55*( atan( 10*(\x - 4) )/90 + 1 )
	    + .00 });            
	\draw[domain=-.25:1.95, smooth, thick, blue] plot (\x,
	  { .55*( atan( 10*(\x - 1.95) )/90 + 1 ) 
	    + .05 }); 
	\draw[domain=1.95:5.25, smooth, thick, blue] plot (\x,
	  { .55*( atan( 10*(\x - 1.95) )/90 + 1 ) 
	    + .05 });    
	
%	\draw[<->] (2,-.5) --++ (.75,0) node[midway, above]{$\geq \theta_\e$};
%	\draw[<->] (1,-.7) --++ (3,0) node[midway, below]{$\Theta_\e$};
	
	\draw[red] (2,0) node[below] {$\o x_k^\e$} --++ (0,.5);
	\draw[red] (2.75,0) node[below] {$\o x_{k+1}^\e$} --++ (0,.5);
	\draw[red] (4,0) node[below] {$\o x_K^\e$} --++ (0,.55) node[right] {$\o v_\e$};
	\draw[blue] (1.9, .6) node[left]{$ v_\e^1$};
	
%	\draw[<-] (.35,0) --++ (0,-.3) node[below] {$o_\e(1)$}; 
%	\draw[<-] (.35,.08) --++ (0,.3);
\end{scope}

\begin{scope}[shift={(0,-3.75)},scale=1] 
	\draw[->] (0,-1.25) -- (0,.25);
	\draw[->] (-.25,0) -- (5.25,0) node[right] {$x$};
	\draw (0,-1) node[left]{$-1$} --++ (5.25,0);
	\draw (-.2,-.5) node[left]{Case 3};
	
	\draw[domain=-.25:1.5, smooth, thick, red] plot (\x,
	  { -.55*( atan( 10*(\x - 1) )/90 + 1 ) 
	    + .55*( atan( 10*(\x - 2) )/90 + 1 ) 
	    - .55*( atan( 10*(\x - 2.75) )/90 + 1 )
	    + .1 });
	\draw[domain=1.5:2.375, smooth, thick, red] plot (\x,
	  { -.55*( atan( 10*(\x - 1) )/90 + 1 ) 
	    + .55*( atan( 10*(\x - 2) )/90 + 1 ) 
	    - .55*( atan( 10*(\x - 2.75) )/90 + 1 )
	    + .1 });
	\draw[domain=2.375:5.25, smooth, thick, red] plot (\x,
	  { -.55*( atan( 10*(\x - 1) )/90 + 1 ) 
	    + .55*( atan( 10*(\x - 2) )/90 + 1 ) 
	    - .55*( atan( 10*(\x - 2.75) )/90 + 1 )
	    + .1 });     
	\draw[domain=-.25:2.8, smooth, thick, blue] plot (\x,
	  { - .55*( atan( 10*(\x - 2.8) )/90 + 1 )
	    + .14 });
	\draw[domain=2.8:5.25, smooth, thick, blue] plot (\x,
	  { - .55*( atan( 10*(\x - 2.8) )/90 + 1 )
	    + .14 });    
	
%	\draw[<->] (2,-.5) --++ (.75,0) node[midway, above]{$\geq \theta_\e$};
%	\draw[<->] (1,-.7) --++ (3,0) node[midway, below]{$\Theta_\e$};
	
	\draw[red] (1,0) --++ (0,-.45) node[left] {$\o v_\e$};
	\draw[red] (2,0) --++ (0,-.45);
	\draw[red] (2.75,0) --++ (0,-.45);
	\draw[red] (1,.06) node[above] {$\o x_k^\e$};
	\draw[red] (2,.06) node[above] {$\o x_{k+1}^\e$};
	\draw[red] (2.75,.06) node[above] {$\o x_{K}^\e$};
	\draw[blue] (2.85, -.4) node[right]{$ v_\e^1$};
\end{scope}

\begin{scope}[shift={(0,-6.5)},scale=1] 
	\draw[->] (0,-.25) -- (0,1.25);
	\draw[->] (-.25,0) -- (5.25,0) node[right] {$x$};
	\draw (0,1) node[left]{$1$} --++ (5.25,0);
	\draw (-.2,.5) node[left]{Case 4};
	
	\draw[domain=-.25:1.5, smooth, thick, red] plot (\x,
	  { .55*( atan( 10*(\x - 1) )/90 + 1 ) 
	    - .55*( atan( 10*(\x - 2) )/90 + 1 ) 
	    + .55*( atan( 10*(\x - 2.75) )/90 + 1 )
	    - .55*( atan( 10*(\x - 4) )/90 + 1 )
	    + .02 });
	\draw[domain=1.5:2.375, smooth, thick, red] plot (\x,
	  { .55*( atan( 10*(\x - 1) )/90 + 1 ) 
	    - .55*( atan( 10*(\x - 2) )/90 + 1 ) 
	    + .55*( atan( 10*(\x - 2.75) )/90 + 1 )
	    - .55*( atan( 10*(\x - 4) )/90 + 1 )
	    + .02 });
	\draw[domain=2.375:3.375, smooth, thick, red] plot (\x,
	  { .55*( atan( 10*(\x - 1) )/90 + 1 ) 
	    - .55*( atan( 10*(\x - 2) )/90 + 1 ) 
	    + .55*( atan( 10*(\x - 2.75) )/90 + 1 )
	    - .55*( atan( 10*(\x - 4) )/90 + 1 )
	    + .02 });
	\draw[domain=3.375:5.25, smooth, thick, red] plot (\x,
	  { .55*( atan( 10*(\x - 1) )/90 + 1 ) 
	    - .55*( atan( 10*(\x - 2) )/90 + 1 ) 
	    + .55*( atan( 10*(\x - 2.75) )/90 + 1 )
	    - .55*( atan( 10*(\x - 4) )/90 + 1 )
	    + .02 }); 
	\draw[domain=-.25:2.5, smooth, thick, blue] plot (\x,
	  { .55*( atan( 10*(\x - .95) )/90 + 1 ) 
	    - .55*( atan( 10*(\x - 4.05) )/90 + 1 )
	    + .04 });
	\draw[domain=2.5:5.25, smooth, thick, blue] plot (\x,
	  { .55*( atan( 10*(\x - .95) )/90 + 1 ) 
	    - .55*( atan( 10*(\x - 4.05) )/90 + 1 )
	    + .04 });                      
	
	\draw[red] (1,0) node[below] {$\o x_k^\e$} --++ (0,.5);
	\draw[red] (2,0) node[below] {$\o x_{k+1}^\e$} --++ (0,.55);
	\draw[red] (2.75,0) node[below] {$\o x_{k+2}^\e$} --++ (0,.55);
	\draw[red] (4,0) node[below] {$\o x_K^\e$} --++ (0,.55) node[left] {$\o v_\e$};
	\draw[blue] (4.1, .6) node[right]{$\hat v_\e^0$};
\end{scope}
}
\end{tikzpicture} \\
\caption{Simple, typical examples for multiple particle collisions in Cases 1--4.}
\label{fig:mult-part-coll} 
\end{figure}
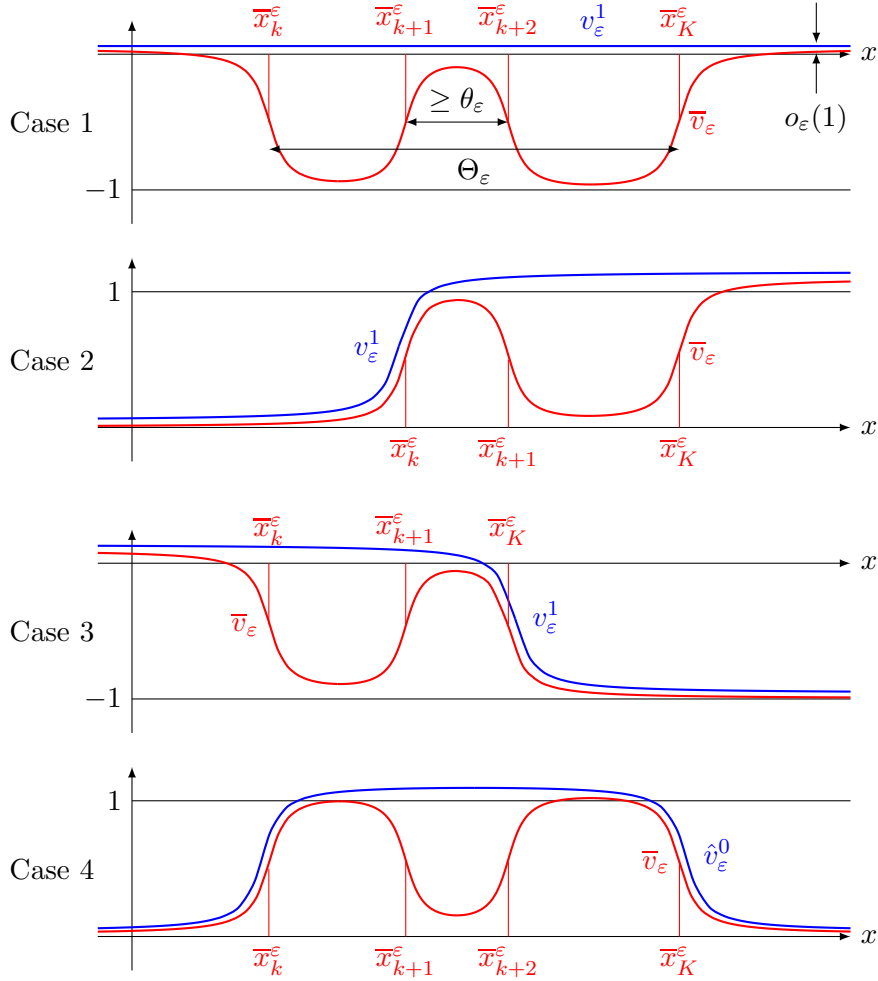

\paragraph{Case 1: $\# I$ is even and $b_k = -1$.} This case follows from a minor modification to the proof in Subsection \ref{-+seclimsupproof}, in which we skipped the construction of $\hat v_\e^\ell$, put $\hat T_1 := \o T_1$, and constructed an upper bound of the type \eqref{pf:xs}. In the current setting, with 
\[
  S_1 = \{1, \ldots, k-1\} \cup \{K+1, \ldots, N\}, 
\]
the only modification to the proof of \eqref{pf:xs} is that we apply Lemma \ref{l:u:epsi:dipole} with $\vartheta_\e = \theta_\e$ to \eqref{pf:wx} to each of the particle pairs $(k, k+1), (k+2, k+3), \ldots, (K-1, K)$. To see that the conditions of Lemma \ref{l:u:epsi:dipole} are met, we observe from \eqref{pf:yk} that the lower bound on $z$ in Lemma \ref{l:u:epsi:dipole} is met, and from \eqref{pf:ym} that for each particle pair, the left particle has negative orientation and the right particle has positive orientation.

\paragraph{Case 2: $\# I$ is odd and $b_k = 1$.} This case follows from a minor modification to Case 1. Again, we skip the construction of $\hat v^\ell_\e$, and construct an upper bound of the form \eqref{pf:xs}.

The first modification is that for an odd number of particles the choice for the index $i \in I$ of the surviving particle $x_i$ is not unique. For the construction below it is convenient to take $i = k$, i.e. 
\[
  S_1 = \{1, \ldots, k\} \cup \{K+1, \ldots, N\}.
\]
Then, we apply Lemma \ref{l:u:epsi:dipole} to \eqref{pf:wx} to bound all contributions from the pairs $(k+1, k+2), (k+3, k+4), \ldots, (K-1, K)$. This yields an upper bound of the form \eqref{pf:xs}. 

\paragraph{Case 3: $\# I$ is odd and $b_k = -1$.} This case follows by a similar argument as in Case 2. The only difference is that here we choose $x_K$ instead $x_k$ as the surviving particle.

\paragraph{Case 4: $\# I$ is even and $b_k = 1$.} Since this case covers the simple collision (i.e.\ when $\# I = 2$) for which we introduced a rather elaborate construction of $\hat v_\e^3$ in Subsection \ref{subsubvtildel}, we cannot avoid the construction of $\hat v_\e^3$. However, we can simplify the setting to the two-particle collision in Subsection \ref{subsubvtildel} by removing all but 2 particles from $I$. We do this by proving \eqref{pf:zx:IC} in two steps, i.e.\ we are going to construct $\hat v_\e^0 : \R \to \R$, and from there $\hat v_\e^\ell$, such that  
\[
  \o v_\e (\o T_1, \cdot)
  \leq \hat v_\e^0
  \leq \hat v_\e^\ell (\o T_1, \cdot).
\]   

To establish the first inequality, as in Cases 1, 2 and 3, we apply Lemma \ref{l:u:epsi:dipole} to the pairs $(k+1, k+2), (k+3, k+4), \ldots, (K-2, K-1)$. This yields
\[
  \o v_\e (\o T_1, x)
  \leq \sum_{i \in \hat S_1} ( u - \ep\cs_i^\e \psi ) \left(\displaystyle \frac{x - \xs_i^\e}{\ep} ;  b_i\right) + C \frac{\e^2}{\theta_\e^2} =: \hat v_\e^0(x),
\]
where 
\[
  \hat S_1 :=  \{1, \ldots, k\} \cup \{K, \ldots, N\}.
\]
In this way, we reduce to the case where the only two particles colliding at $T_1$ are $x_k$ and $x_K$. 
From $\{ \xs_i^\e \}_{i \in \hat S_1}$ and $\hat v_\e^0$ we can proceed analogously as in Subsection \ref{subsubvtildel} with the construction of $\hat v_\e^\ell$ on $[\o T_1, \hat T_1] \times \R$ (see Remark \ref{r:Thetae} for a minor modification to the proof).
\smallskip

Since in all four cases the construction of $\hat v_\e^\ell$ can be reduced to that in Section \ref{s:pf:15}, we can construct $\tv_\e$ and $\o v_\e$ beyond $\hat T_1$ analogously to Subsection \ref{s:pf:1:3}. Consequently, \eqref{pf:zz} holds on $(T_1, T_2)$.

\subsubsection{Proof of \eqref{pf:zz} at $t = T_1$.} 
\label{proofatT1secgeneral}
Inequality \eqref{pf:zz} at $t = T_1$ follows from obvious, minor modifications to the arguments in Sections \ref{s:pf:1:4} and \ref{-+seclimsupproof}; we omit the details.

\subsection{The general case}
\label{s:pf:3} 

In this section we prove \eqref{pf:zz} at $T_1$ for general $\bx^0 \in \Omega^N$. One example of a scenario which is not covered in the preceding sections is illustrated in Figure \ref{fig:trajs} at $t = T_1$. Let $M$ be the number of collisions at $T_1$ (note that $M \leq N/2$ ); we label them $m = 1,\ldots,M$. Let 
\[
  y_1 < y_2 < \ldots < y_M
\]
be the spatial points where a collision takes place. Similar to Section \ref{s:pf:2}, set for each $m$
\[
  I_m := \{ k_m, k_m+1, \ldots, K_m \}
\]
as the set of indices of the particles that collide at time-space point $(T_1, y_m)$. Note that the sets $\{I_m\}_m$ are disjoint, i.e.\ $K_m < k_{m+1}$ for all $m$.

We start from $\tv_\e$ and $\o v_\e$ as constructed in Section \ref{s:pf:1}. Then, we construct $\hat v_\e^\ell$ in Subsection \ref{s:pf:3:1} by modifying the construction in Subsections \ref{subsubvtildel} and \ref{s:pf:2:1}. We prove \eqref{pf:zz} at $T_1$ in Subsection \ref{s:pf:3:2}.

\subsubsection{Construction of $\hat v_\e^\ell$, $\tv_\e$ and $\o v_\e$ on $[\o T_1, \o T_2]$} 
\label{s:pf:3:1}

As in Subsection \ref{s:pf:2:1}, we start from $\o v_\e$ as given in \eqref{pf:yi}. Here, \eqref{pf:yn}--\eqref{pf:yk} turn into
\begin{equation*} %\label{pf:yg} 
  \theta_i(T_1) \geq l_0
  \quad \text{for all } i \text{ such that } x_i(T_1) < x_{i+1}(T_1)
\end{equation*}
for some fixed $l_0 > 0$, and for $\e$ small enough
\beq\label{pf:yf}  
\xs_{i+1}^\ep- \xs_{i}^\ep
\geq \frac{l_0}2 
\quad \text{for all } i \text{ such that } x_i(T_1) < x_{i+1}(T_1),
\eeq
\beqs %\label{pf:ye}
\theta_\e
=
\o \theta (\o T_1 - \o \tau_\e)
\leq\max_{1 \leq m \leq M} \xs_{K_m}^\ep - \xs_{k_m}^\ep
=: \Theta_\ep = o_\e(1) 
\eeqs
and
 \beq \label{pf:ww}   
   |\cs_i^\ep| 
   \leq C + \begin{cases}
   \dfrac1{\theta_\ep}
   & \text{if } k_m \leq i \leq K_m \text{ for some } m \\
   0 & \text{otherwise.}
\end{cases} 
\eeq   

We recall from Subsection \ref{s:pf:2:1} that it depends on the type of collision (we separated 4 types; see Figure \ref{fig:mult-part-coll}) whether $\hat v_\e^\ell$ needs to be constructed or not. Based on this, we split three cases depending on the number of collisions which require the construction of $\hat v_\e^\ell$, i.e.\ the number of collisions which are as in Case 4 in Subsection \ref{s:pf:2:1}:

\paragraph{Case A: no collision $m$ is as in Case 4.} Thanks to the separation condition \eqref{pf:yf}, we can apply the argument for the bound in \eqref{pf:xs} simultaneously for each collision as in either Case 1, 2 or 3 in Subsection \ref{s:pf:2:1}. This yields
\[
v_\ep(\o T_1,x)\leq  \o v_\ep(\o T_1,x) 
\leq \sum_{i \in S_1} u\left(\displaystyle \frac{x - \o x_i^\e}{\ep} ; b_i\right) 
   + \tilde \rho_\e
   \qquad \text{for all } x \in \R,
\]
where $\tilde \rho_\e = o_\e(1)$ is the sum of all the errors made at each collision $m$, and $S_1$ is a possible choice of surviving particles after the first collision consistent with $\bx$ (recall the nonuniqueness for the solution $\bx$ induced by collisions between an odd number of particles).
In particular, no $\hat v_\e^\ell$ needs to be constructed.

\paragraph{Case B: precisely one collision $m_4$ is as in Case 4.} As in Case 4 from Subsection \ref{s:pf:2:1}, we first construct a profile $\hat v_\e^0$ such that  
\begin{equation} \label{pf:wv} 
  \o v_\e (\o T_1, \cdot)
  \leq \hat v_\e^0.
\end{equation}
This construction goes as in Case A for each $m \neq m_4$ and as in Case 4 for $m = m_4$. Then, the resulting profile $\hat v_\e^0$ has the same properties as in Case 4, and thus we construct $\hat v_\e^\ell$ analogously as done in Case 4.

\paragraph{Case C: there are at least two collisions as in Case 4.} Proceeding as in Case B, we obtain \eqref{pf:wv} for some profile $\hat v_\e^0$ related to the set of particles $\{ \o x_i^\e \}_{i \in \hat S_1}$ for some $\hat S_1$. By this construction, the particles $\{ \o x_i^\e \}_{i \in \hat S_1}$ are for $\e$ small enough separated by $l_0/2$, except for those pairs that corresponds to a collision as in Case 4. Hence, by starting from $\hat v_\e^0$ with the construction of $\hat v_\e^\ell$, we may reduce to the case in which all collisions are simple and of type `$+-$' (see Section \ref{s:pf:15} for this terminology). In more detail, by repeating the constructions of  Cases 1-4 above, we reduce to having the set of colliding particles as follows,
$$ I_m=\begin{cases} \emptyset&\text{if } \# I_m\text{ is even and }b_{k_m}=-1\text{ (as in Case 1)}\\
\{k_m\}&\text{if } \# I_m\text{ is odd  and }b_{k_m}=1\text{ (as in Case 2)}\\
\{K_m\}&\text{if } \# I_m\text{ is odd  and }b_{k_m}=-1 \text{ (as in Case 3)}\\
\{k_m,K_m\}&\text{if } \# I_m\text{ is even and }b_{k_m}=1 \text{ (as in Case 4)}\\
\end{cases}
$$
Therefore, with $2\leq\tilde M\leq M$ as the number of Case-4 collisions, we may assume that there are only $\tilde M$ simple (only two particles collide) collisions 
as in Case 4 and that the number of particles corresponding to the profile $\hat v_0^\e$ is equal to $\tilde N := \# S_1 + 2\tilde M$. 

By relabelling the indices and resetting $n = \tilde N$ and $M = \tilde M$, we may assume that $\hat v_\e^0$ is of the form \eqref{pf:yi} with respect to some $\bx_i^\e$ which satisfies \eqref{pf:yf}--\eqref{pf:ww} with $K_m = k_m + 1$ and $b_{k_m} = 1$ for all $m \in \{1, \ldots, M\}$. %, where $M \in \{2, \ldots, \lfloor N/2 \rfloor \}$. 
Then  we set 
\[
  \Theta_\e^m := x_{k_m + 1} - x_{k_m} \leq \Theta_\e
  \quad \text{for all } m.
\]
Instead of the ODE system \eqref{pf:zs} for $\hat \bx$, we consider the same ODE with a slightly different initial condition:
 \beq\label{pf:yb} 
\left\{ \begin{aligned}
   \dot{\xss}_i 
   &= \displaystyle\sum_{j \neq i} 
\displaystyle\frac{b_i b_j}{ \xss_i-\xss_j} - b_i \hat \sigma
 &&\text{in }(0, \hsT_1) \text{ for } i = 1,\ldots, N \\
 \xss_i(0) 
 &= \xs_{i}^\ep - b_i \Theta_\ep 
 &&\text{if } i \neq k_m+1 \text{ for each } m \\
 \xss_i(0) 
 &= \xs_i^\ep - b_i (L \Theta_\e + (\Theta_\e - \Theta_\e^m))
 &&\text{if there exists an } m \text{ such that } i = k_m+1.
 \end{aligned} \right. 
\eeq
The addition of the small value $\Theta_\e - \Theta_\e^m > 0$ to the initial condition in \eqref{pf:yb} is not essential; we do it for convenience such that all particles which are close to collision are separated by the same distance, i.e.
\begin{equation} \label{pf:wu} 
  \xss_{k_m + 1}(0) - \xss_{k_m}(0) = (L+2) \Theta_\e
  \quad \text{for all } m. 
\end{equation}

Lemma \ref{l:te} and its proof apply with minor modifications. Indeed, the proof applies to each of the $M$ collisions separately; the arguments are based on the fact that for any collision $m$, the other $n-2$ particles remain an $\e$-independent, positive distance away from the colliding particles $\hat x_{k_m}$ and $\hat x_{k_m + 1}$ during $[0, \hsT_1)$. Moreover, thanks to \eqref{pf:wu}, the estimates on $\hsT_1$ do not depend on $m$, and thus the value of $\hat \tau_\e$ (and then also $\hat T_1 = \o T_1 + \hat \tau_\ep$) is independent of $m$. 

With the equivalent of Lemma \ref{l:te} established, the remainder of the argument in Subsection \ref{subsubvtildel} for the construction of $\hat v_\e^\ell$ holds with minor modifications. The main modification among them applies to \eqref{pf:ze}, which does not hold for all $x \leq (\o x_{k_m}^\e + \o x_{k_m + 1}^\e)/2$ and for all $m$. However, it does hold whenever $x \leq (\o x_{k_1}^\e + \o x_{k_1 + 1}^\e)/2$ or whenever $( \o x_{k_m} +  \o x_{k_m + 1})/2 \leq x \leq (\o x_{k_{m+1}}^\e + \o x_{k_{m+1} + 1}^\e)/2$ for some $m \in \{1,\ldots,M-1\}$. The precise choice for the lower bound on $x$ is not important; we choose the midpoint between two consecutive collision points because it is independent of $\e$ and because it also appears in the construction of $\hat v_\e^2$.
From the construction of $\hat v_\ep^\ell$, as in Subsection \ref{subsubvtildel},  we infer that 
\begin{equation*}
  v(\hat T_1,x)
   \leq \sum_{i \in S_1} u\left(\displaystyle \frac{x - \hat x_i^\e}{\ep} ; b_i\right) + \hat \rho_\e
   \qquad \text{for all } x \in \R,
\end{equation*}  
with $ \hat \rho_\e=o_\ep(1)$ and $\hat x_i^\e =o_\ep(1)$. 
\smallskip

From the results in all three cases, the construction in Subsection \ref{s:pf:1:3} applies to build $\tv_\e$ and $\o v_\e$ beyond $\hat T_1$. Consequently, \eqref{pf:zz} holds on $(T_1, T_2)$.

\subsubsection{Proof of \eqref{pf:zz} at $t = T_1$}
\label{s:pf:3:2}
As in Section \ref{proofatT1secgeneral}, we refer for the proof of \eqref{pf:zz} at $t = T_1$ to Sections \ref{s:pf:1:4} and \ref{-+seclimsupproof} and leave the obvious modifications to the reader. 

\subsection{Induction step} 
The induction statement $P_k$ is: 
\begin{center}
$\o v_\e (\o T_k, x)$ is as in \eqref{pf:yi} with some parameters $\o c_i^\e$ and $\o x_i^\e$ for $i \in S_{T_{k-1}}$ which satisfy  \eqref{pf:yf}-\eqref{pf:ww} for all $\e$ small enough, and \eqref{pf:zz} holds on $[T_{k-1}, T_k)$.
\end{center}
In Section \ref{s:pf:3} we prove that $P_1$ implies $P_2$. This proof does not depend on the parameter $k$, and thus we may iterate it finitely many times until $T_{K+1} = T$. This completes the proof of Theorem \ref{t}.

\bigskip 
\noindent {\bf Acknowledgements.}  The first author has been supported by JSPS KAKENHI Grant Number JP20K14358. The second  author has been supported by the
NSF Grant DMS-2155156 ``Nonlinear PDE methods in the study of interphases".


\begin{thebibliography}{vMPP20}

\bibitem[BKM10]{BKM10}
{\sc P. Biler, G. Karch and R. Monneau},
\newblock Nonlinear diffusion of dislocation density and self-similar solutions.
\newblock {\em Comm. Math. Phys.},  {\bf 294}:145--168, 2010.

\bibitem[CSM05]{CabreSolaMorales05}
{\sc Xavier Cabr\'{e} and Joan Sol\`a-Morales},
\newblock Layer solutions in a half-space for boundary reactions.
\newblock {\em Comm. Pure Appl. Math.},  {\bf 58}(12):1678--1732, 2005.


\bibitem[DFV14]{dfv14}
{\sc S. Dipierro, A. Figalli and E. Valdinoci}.
\newblock Strongly nonlocal dislocation dynamics in crystals.
\newblock
{\em Commun. Partial Differ. Equations},  {\bf 39}(12): 2351--2387, 2014. 

\bibitem[DPV15]{dpv15}
{\sc S. Dipierro, G. Palatucci and E. Valdinoci}.
\newblock Dislocation dynamics in crystals: a macroscopic
theory in a fractional Laplace setting.
\newblock {\em Comm. Math. Phys.}, 
{\bf 333}(2): 1061--1105, 2015. 

\bibitem[DPV22]{DipierroPatriziValdinoci22}
{\sc S. Dipierro, S. Patrizi and E. Valdinoci}
\newblock A fractional glance to the theory of edge dislocation.
\newblock {\em Geometric and Functional Inequalities and Recent Topics in Nonlinear PDE's}, (2022).


%\bibitem[EHM09]{HajjIbrahimMonneau09}
%{\sc A. El Hajj,   H. Ibrahim and R. Monneau},  
%\newblock  Dislocation dynamics: from microscopic models to macroscopic crystal plasticity, 
%{\em Continuum Mechanics and Thermodynamics} {\bf 2}1 (2) (2009), 109--123.

\bibitem[FIM12]{fino}
{\sc A. Z. Fino, H. Ibrahim and R. Monneau}.
\newblock The Peierls-Nabarro model as a limit of a Frenkel-Kontorova model
solutions in a half-space for boundary reactions.
\newblock {\em J. Differential Equations}, {\bf 252}(1): 258--293, 2012. 


%\bibitem[FIM09]{ForcadelImbertMonneau09}
% {\sc N. Forcadel, C. Imbert and  R.  Monneau}.
%\newblock  Homogenization of some particle systems with two-body interactions and of the dislocation dynamics.
%\newblock {\em  Discrete Contin. Dyn. Syst.}, {\bf 23}(3): 785--826, 2009. 


\bibitem[GM12]{GonzalezMonneau12}
{\sc M. M.  Gonz\'{a}lez and R.  Monneau}.
\newblock Slow motion of particle systems as a limit of a reaction-diffusion
  equation with half-{L}aplacian in dimension one.
\newblock {\em Discrete Contin. Dyn. Syst.},  {\bf 32}(4): 1255--1286, 2012.

\bibitem[HB11]{HullBacon11}
{\sc D.~Hull and D.~J. Bacon}.
\newblock {\em Introduction to Dislocations}.
\newblock Butterworth-Heinemann, Oxford, 2011.

\bibitem[HL92]{hl}
{\sc J. R. Hirth and L. Lothe}. 
\newblock Theory of dislocations, Second Edition. 
\newblock Malabar, Florida: Krieger, 1992.

\bibitem[JK05]{JakoKar}
{\sc Espen R. Jakobsen and  Kenneth H. Karlsen}.
\newblock Continuous dependence estimates for viscosity solutions of
              integro-PDEs.
 \newblock {\em  J. Differential Equations},  {\bf 212}(2): 278--318, 2005.         
 
 \bibitem[vMPP20]{VanMeursPeletierPozarXX}
{\sc P.~{v}an Meurs, M.~A. Peletier, and N.~Pozar}.
\newblock Discrete-to-continuum convergence of interacting particles with
  annihilation.
\newblock  {\em Arch Rational Mech Anal}, https://doi.org/10.1007/s00205-022-01812-1,  2022. 

 
 
 \bibitem[MP12a]{MonneauPat12}
 {\sc R\'{e}gis Monneau and Stefania Patrizi}. 
\newblock Homogenization of the {P}eierls-{N}abarro model for
              dislocation dynamics.
 \newblock {\em  J. Differential Equations},  {\bf 253}(7): 2064--2105, 2012.         
 
 		
 \bibitem[MP12b]{MonneauPat12b}
  {\sc R\'{e}gis Monneau and Stefania Patrizi}. 
\newblock Derivation of Orowan's law from the Peierls-Nabarro model.
 \newblock {\em  Comm. Partial Differential Equations},  {\bf37}(10): 1887--1911, 2012. 
 
\bibitem[N47]{Nabarro47} 
{\sc F.R.N. Nabarro}.
\newblock  Dislocations in a simple cubic
lattice. 
\newblock   {\em Proc. Phys. Soc.}, {\bf 59}: 256-272, 1947. 

%\bibitem{o}{\sc T. Ohtsuka}, Motion of interfaces by an Allen-Cahn type equation
%with multiple-well potentials, {\em Asymptot. Anal.}, {\bf 56} (2008), no. 2, 87-123.


\bibitem[P40]{Peierls40} 
{\sc R. Peierls}.
\newblock  The size of a dislocation.
\newblock  {\em Proc. Phys. Soc.}, {\bf 52}: 34-37, 1940.


 
% \bibitem[N97]{Nabarro97}
%  {\sc F. R. N. Nabarro}.
%\newblock Fifty-year study of the Peierls--Nabarro stress.
%\newblock {\em Mat. Sci.  Eng. A} {\bf 234--236}: 67-76, 1997. 

 \bibitem[PS21]{PatriziSang21}
{\sc Stefania Patrizi and  Tharathep Sangsawang}.
\newblock From the {P}eierls-{N}abarro model to the equation of motion
              of the dislocation continuum.
 \newblock {\em  Nonlinear Anal.},  {\bf202}: Paper No. 112096, 50 pp.,  2021.         
 
 \bibitem[PS22]{PatriziSang22}
{\sc Stefania Patrizi and  Tharathep Sangsawang}.
\newblock Derivation of the  1-D Groma-Balogh equations from the Peierls-Nabarro model
 \newblock {\em Preprint}, 2022.          
 
 
 

\bibitem[PV15a]{PatriziValdinoci15}
{\sc Stefania Patrizi and Enrico Valdinoci}.
\newblock Crystal dislocations with different orientations and collisions.
\newblock {\em Arch. Ration. Mech. Anal.},  {\bf217}(1): 231--261, 2015.

\bibitem[PV15b]{PatriziValdinoci15b}{\sc S. Patrizi and  E. Valdinoci}.
\newblock  Homogenization and Orowan's law for anisotropic fractional operators of any order.
\newblock  {\em Nonlinear Analysis: Theory, Methods and Applications},  {\bf119}: 3--36, 2015. 


\bibitem[PV16]{PatriziValdinoci16}
{\sc Stefania Patrizi and Enrico Valdinoci}.
\newblock Relaxation times for atom dislocations in crystals.
\newblock {\em Calc. Var. Partial Differential Equations},  {\bf55}(3): Art. 71, 44,
  2016.

\bibitem[PV17]{PatriziValdinoci17}
{\sc Stefania Patrizi and Enrico Valdinoci}.
\newblock Long-time behavior for crystal dislocation dynamics.
\newblock {\em Math. Models Methods Appl. Sci.},  {\bf27}(12): 2185--2228, 2017.


\bibitem[S05]{Silvestre}
{\sc L. Silvestre}.
\newblock  {\em Regularity of the obstacle problem for a fractional power of the Laplace operator}.
\newblock PhD thesis, University of Texas at Austin (2005),
available online at {\tt http://math.uchicago.edu/~luis/preprints/luisdissreadable.pdf}.


\end{thebibliography}
\end{document}